\documentclass[10pt]{article}
\usepackage{geometry}

\usepackage{comment,hyperref,slashed,tensor}

\usepackage{amsmath}
\usepackage[T1]{fontenc}
\usepackage[utf8]{inputenc}
\DeclareMathOperator{\supp}{supp}

\usepackage{ mathrsfs }
\usepackage{amsmath, amsfonts, mathtools, amsthm, amssymb}

\mathtoolsset{showonlyrefs}
\usepackage{todonotes, tikz}
\usepackage{hyperref,  color}

\newtheorem{theorem}{Theorem}[section]
\newtheorem{lemma}[theorem]{Lemma}
\newtheorem{proposition}[theorem]{Proposition}

\newtheorem{cor}[theorem]{Corollary}

\newtheorem{definition}[theorem]{Definition}

\newtheorem{question}[theorem]{Question}

\newtheorem{remark}[theorem]{Remark}
\newcommand{\be}{\begin{equation}}
\newcommand{\ee}{\end{equation}}

\def\XXint#1#2#3{{\setbox0=\hbox{$#1{#2#3}{\int}$ }
\vcenter{\hbox{$#2#3$ }}\kern-.6\wd0}}

\title{On the flexibility of 2D Euler steady states}
\author{Tarek M. Elgindi and Yupei Huang}
\date{\today}

\begin{document}

\maketitle

\begin{abstract}
We consider steady states of the incompressible Euler equation on two-dimensional domains. For non-radial analytic steady states on bounded simply connected domains, it was shown previously that there must be a global functional relationship between the stream function and the vorticity. We show that this does not extend to smooth functions, even under further structural assumptions such as the Morse condition or Arnold's stability criterion. More precisely, we show that a broad class of steady states with multiple critical points can be perturbed to smooth steady states for which the vorticity is not a single-valued function of the stream function. We also establish an analogous flexibility result near the cellular flow on the flat torus, which is a degenerate case. As a consequence of our constructions, there are "branches" of smooth steady states that are isolated from analytic ones. In some cases, the resulting isolated branches can even consist entirely of linearly stable steady states.  
\end{abstract}

\section{Introduction}

Steady solutions play a central role in the dynamics of evolution equations. 
To understand the long-time behavior of unsteady solutions, one first needs a 
good understanding of the structure of the steady ones. For the two-dimensional 
incompressible Euler equations, there are large classes of steady solutions 
exhibiting a wide variety of structures. Naturally, there is a long and extensive 
literature on their stability 
\cite{arnold1965conditions,lin2004some,arnold1966priori,lin2004nonlinear,lin2003instability,cao2026instability,zhao2024inviscid}, 
flexibility 
\cite{choffrut2012local,coti2023stationary,elgindi2022regular,lin2011inviscid,constantin2021flexibility,enciso2024smooth}, 
and rigidity 
\cite{ruiz2023symmetry,hamel2017shear,gomez2021symmetry,hamel2019liouville,gui2024classification,ElgindiHuangSaidXie_ClassificationSteadyEulerFlows_DMJ,drivas2024geometric}.    

\subsection{Flexibility, Rigidity, and Stability}
Steady 2D Euler solutions on a simply connected domain $\Omega$ are given by sufficiently smooth functions $\psi:\bar\Omega\rightarrow\mathbb{R}$ that are constant on $\partial\Omega$ for which:
\begin{equation}\label{SEE1} \nabla^\perp\psi\cdot\nabla\Delta\psi=0. \end{equation} Here, $\psi$ represents the stream function, $\nabla^\perp$ is simply the rotated gradient, and $\Delta\psi$ is often denoted by $\omega,$ the vorticity, while the velocity field $\nabla^\perp\psi$ is denoted by $u$. 
 \eqref{SEE1} has a large class of solutions on any domain. This is intricately related to the fact that the equations have infinitely many conserved quantities \cite{marchioro2012mathematical}. On general simply connected domains, the known conserved quantities are\footnote{On multiply connected domains, we would also include the circulations on the boundary. The Kelvin circulation theorem gives further conserved quantities, though these are not local in time in the sense that they may depend on the time evolution of the velocity field (rather than the velocity at a particular time).}:
\begin{equation}\label{CLs}E=\int |u|^2,\qquad J_{f}=\int f(\omega).\end{equation} One technique to construct steady solutions is to extremize $E$ when one of the functionals $J_f$ is fixed. In the case that $f$ is convex, this procedure yields a Lyapunov stable steady state. This immediately yields an infinite-dimensional family of steady states on any reasonable domain, with each stream function solving a semilinear elliptic equation:
\begin{equation}\label{SEE2}\Delta\psi = F(\psi),\end{equation}
with $F=f'$. It was proven recently in \cite{ElgindiHuangSaidXie_ClassificationSteadyEulerFlows_DMJ} that, in the analytic class, \eqref{SEE1} and \eqref{SEE2} are equivalent\footnote{More precisely, a non-radial analytic function $\psi$ satisfies \eqref{SEE1} if and only if there exists an $F$ for which \eqref{SEE2} holds. }. One could say that this implies a correspondence between the conservation laws \eqref{CLs} and the steady states, at least in the analytic class. Outside of the analytic class, the equivalence of \eqref{SEE1} and \eqref{SEE2} is false due to the existence of compactly supported radial solutions (see also \cite{GomezSerranoCompact,enciso2024smooth} for further nontrivial compactly supported solutions); however, such counterexamples are non-generic and likely to be dynamically unstable. Given the equivalence of \eqref{SEE1} and \eqref{SEE2} in the analytic class, it is natural to wonder how rare such counterexamples can be. For example, can they be ruled out by assuming that the steady state is Morse or even "stable"? We will show that this is not the case, and in fact, there are open sets in function space in which all steady states to the Euler equation are Morse, satisfy Arnold's stability criterion, but \emph{do not} satisfy \eqref{SEE2}.
\subsubsection{Arnold's Stability Criterion}
As we mentioned above, the variational principle using convex conserved quantities \eqref{CLs} to construct steady states, yields steady states that are stable in the $L^2$ norm of the vorticity. This is sometimes called Kelvin's variational principle \cite{Kelvin,Burton}, but was established rigorously in greater generality by Arnold \cite{arnold1965conditions}. Through the lens of his geometric formalism, Arnold saw that this could be generalized to all steady states in the following way. First, observe that \emph{$\omega_*$ is steady if and only if it is a critical point of the energy among the coadjoint orbit of area-preserving diffeomorphisms.} Second, observe that when $\psi_*$ is a sufficiently smooth Morse function, the ratio $\frac{\nabla\omega_*}{\nabla\psi_*}$ can be computed globally as (at least) a bounded function. The second variation of the energy can then be formally computed as:
\[B_*(\omega)=\int |\nabla \Delta^{-1}\omega|^2+\frac{\nabla\psi_*}{\nabla\omega_*}|\omega|^2,\] where $\omega$ is the variation of vorticity. 
This motivates Arnold's Stability Criterion:
\begin{definition}
A steady solution $\omega_*$ of the Euler equation is said to satisfy Arnold's
stability criterion if the quadratic form $B_*$ is coercive with a definite sign.
\end{definition}
\noindent The positivity of $B_*$ is implied by:
\begin{equation}\label{ASC} 0<c\leq \frac{\nabla\omega_*}{\nabla\psi_*} \leq C<\infty\end{equation} for some real numbers $c,C.$ \footnote{In the case where the stream function $\psi_{*}$ solving a semilinear elliptic equation $\Delta \psi_{*}=F(\psi_{*})$, \eqref{ASC} is equivalent to $F^{'}(\psi_{*})$ being a positive function.}

The definition is motivated by the finite-dimensional fact:
\begin{lemma}
Consider a dynamical system on $\mathbb{R}^n$: \[\dot{x}(t)=N(x(t))\]  defined by a continuously differentiable $N:\mathbb{R}^n\rightarrow\mathbb{R}^n$ with a conserved quantity $E:\mathbb{R}^n\rightarrow\mathbb{R}$. If $x_*$ is a strict local extremizer of $E,$ then $x_*$ is a Lyapunov stable fixed point for the dynamical system. 
\end{lemma}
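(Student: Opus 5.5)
The plan is the standard Lyapunov argument, with the conserved quantity $E$ (shifted) playing the role of the Lyapunov function. Without loss of generality I would treat $x_*$ as a strict local minimizer of $E$; for a maximizer, replace $E$ by $-E$, which is still conserved. Strict means there is $r_0>0$ with $E(x)>E(x_*)$ for all $0<|x-x_*|\le r_0$.

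First I would check that $x_*$ is a fixed point. The argument uses only uniqueness of solutions, which holds because $N\in C^1$ is locally Lipschitz. Suppose $N(x_*)\neq 0$. The trajectory $x(t)$ starting at $x_*$ then satisfies $x(t)\neq x_*$ for small $t\neq 0$, since $\dot x(0)\neq0$. It also stays within $r_0$ of $x_*$ for small $t$. Conservation gives $E(x(t))=E(x_*)$, which contradicts strictness. Hence $N(x_*)=0$.

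Next comes stability. Given $\varepsilon\in(0,r_0]$, the sphere $S_\varepsilon=\{|x-x_*|=\varepsilon\}$ is compact and $E$ is continuous there. Note that continuity of $E$ must be assumed; it is implicit since $E$ is a conserved quantity of interest, e.g.\ continuous, and I would state this explicitly. So
\[m_\varepsilon:=\min_{S_\varepsilon}E-E(x_*)>0\]
by strictness. By continuity at $x_*$, choose $\delta\in(0,\varepsilon)$ such that $|x_0-x_*|<\delta$ implies $E(x_0)-E(x_*)<m_\varepsilon$.

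Now take such an $x_0$ and let $x(t)$ be the solution on its maximal interval. Suppose $|x(t)-x_*|$ reached $\varepsilon$ at some first time $t_1$; continuity of the trajectory provides such a first time. Then $E(x(t_1))\ge E(x_*)+m_\varepsilon>E(x_0)$, contradicting $E(x(t_1))=E(x_0)$. So the trajectory stays in the open ball of radius $\varepsilon$, and the same argument works for negative times. Since it remains in a compact set, the solution exists globally, by the standard extension/blow-up alternative for locally Lipschitz ODEs.

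The only delicate point is the continuity of $E$ and the existence of a first exit time. Both are routine; the main "obstacle" is simply making sure the hypotheses (continuity of $E$, local Lipschitz $N$) are invoked explicitly.
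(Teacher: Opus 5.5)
Your argument is correct. The paper gives no proof to compare it with: the lemma is quoted as a standard finite-dimensional fact to motivate Arnold's criterion. What you give is the classical Lyapunov argument with $E-E(x_*)$ as the Lyapunov function.

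Your remark about continuity is not pedantry. As literally stated, with $E$ an arbitrary function, the lemma is false. Take $\dot x = x$ on $\mathbb{R}$, with $E(0)=0$ and $E(x)=1$ for $x\neq 0$. This $E$ is conserved, since orbits never reach the origin. The point $0$ is a strict minimizer of $E$, yet it is unstable. What breaks is exactly your choice of $\delta$, which needs upper semicontinuity of $E$ at $x_*$; the positivity of $m_\varepsilon$ likewise needs lower semicontinuity on $S_\varepsilon$. It is worth stating that hypothesis explicitly, as you propose.

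One small economy: your separate first step is optional. Once stability is proved, take $x_0=x_*$. The solution then stays in every ball $B_\varepsilon(x_*)$, so it is constant, and $N(x_*)=0$. Also, that first step as you wrote it uses only existence of a solution through $x_*$, not uniqueness.
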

While infinite dimensional analogues of this may be false \cite{nadirashvili1991wandering,wolansky1998nonlinear,ball1984quasiconvexity}, for a steady state satisfying Arnold's criterion and \emph{also} satisfying \eqref{SEE2} globally, nonlinear stability in $L^2$ of vorticity is known to hold \cite{arnold1998topological,sverak2011course}. Since we must contend with many completely reasonable steady states that do not satisfy \eqref{SEE2}, we thus close this discussion with a question (see also the related Question \ref{Q2}). 
\begin{question}\label{Q1}
Does there exist a steady state $\psi_*$ satisfying \eqref{ASC}  that is nonlinearly unstable in $L^2$ of vorticity?
\end{question}
\noindent We remark that either answer to this question would be very interesting: a positive answer might indicate a "return to symmetry" phenomenon for nonlinear solutions, while a negative answer would indicate that there are forces beyond the basic conserved quantities \eqref{CLs} that govern nonlinear stability (see \cite{KI} for some other possible quantities).

\subsection{Main Results}
We now state the main theorems. 
\begin{theorem}\label{thm: main theorem}
Let $\Omega\subset \mathbb{R}^2$ be bounded and simply connected, $k\geq 6$ be a positive integer and  $\psi_0$ be a $C^{k}$ Morse function vanishing on the boundary for which $\Delta\psi_0=F(\psi_0)$ for some $F$. Assume that the Schr\"odinger operator $\Delta-F'(\psi_0):H^2_0\rightarrow L^2$ is invertible. 
\begin{itemize}
    \item   If $\psi_0$ has more than one critical point in $\Omega,$ then in any $C^{\lfloor\frac{k}{2}-1\rfloor}$ neighborhood of $\psi_0$,  there exists a smooth  steady Euler solution $\tilde\psi$ such that there is no function $\tilde F$ for which $\Delta\tilde\psi=\tilde F(\tilde\psi)$ in $\Omega.$ 
    \item If we further assume that $F'(\psi_0)>0$ uniformly in $\bar\Omega,$ the new steady state $\tilde\psi$ can be chosen to satisfy Arnold's stability criterion \eqref{ASC} and is, thus, linearly stable.  
\end{itemize}
\end{theorem}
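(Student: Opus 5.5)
The plan is to build $\tilde\psi$ as a solution of a \emph{region-dependent} semilinear problem $\Delta\tilde\psi=G(\tilde\psi,x)$, where $G$ is allowed to differ between different components of the level sets of $\psi_0$. Since $\psi_0$ is Morse with more than one critical point, its Reeb graph (the space of connected components of level sets) is a finite tree with at least one interior vertex. Thus there is a saddle $p$ of $\psi_0$ whose critical level $\{\psi_0=\psi_0(p)\}$ separates $\Omega$ into several regions. At least two of these regions, say $D_1$ and $D_2$, are swept out by level-set components on which $\psi_0$ takes a \emph{common} interval of values $I$. If there is no saddle, there would be at least two extrema; a Morse function vanishing on the boundary of a simply connected domain with two critical points of this type forces a saddle, so this case reduces to the previous one.

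I would then choose a smooth bump $g$ supported in the open interval $I$, small in $C^m$, and a cutoff $\chi$ that equals $1$ on the trapped region $D_1$ and $0$ on $D_2$. The point is that $\chi$ is locally constant along streamlines near the relevant level sets. I seek $\tilde\psi$ solving
\begin{equation}
\Delta\tilde\psi = F(\tilde\psi)+\epsilon\,\chi\, g(\tilde\psi),\qquad \tilde\psi|_{\partial\Omega}=0.
\end{equation}
Because $g\circ\tilde\psi$ is supported where $\tilde\psi\in I$, and for $\tilde\psi$ close to $\psi_0$ in $C^1$ this set avoids the separatrix, $\chi\,g(\tilde\psi)$ equals $\mathbf{1}_{D_1} g(\tilde\psi)$ smoothly. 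Any such solution is a steady state: on each region the vorticity is a function of $\tilde\psi$, since $\nabla\chi=0$ on $\operatorname{supp} g(\tilde\psi)$, so $\nabla^\perp\tilde\psi\cdot\nabla\Delta\tilde\psi=0$.

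Existence follows from the implicit function theorem at $\epsilon=0$. The linearization is $\Delta-F'(\psi_0):H^2_0\to L^2$, which is invertible by hypothesis. To apply the theorem I need the map $\psi\mapsto F(\psi)$ to be $C^1$ between the right spaces. Since $F$ is only as smooth as $\psi_0$ allows (on the Morse range, $F\in C^{k-2}$ roughly, with a loss near critical values), I would work in $C^{j,\alpha}$ Schauder spaces with $j\approx k/2-1$. The anticipated derivative loss in the regularity of $F$ near the critical values of $\psi_0$ is what produces the $C^{\lfloor k/2-1\rfloor}$ neighborhood. Smoothness of $\tilde\psi$ away from that issue requires care: I would first smooth $F$ slightly and absorb the difference into the perturbation, but verifying that this still yields a genuinely smooth solution is the delicate point.

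To show that no global $\tilde F$ exists, take $s\in I$ with $g(s)\neq0$. Level-set components of $\tilde\psi$ at value $s$ exist in both $D_1$ and $D_2$, and on these the vorticity takes the values $F(s)+\epsilon g(s)$ and $F(s)$ respectively. These differ, so $\Delta\tilde\psi$ is not a single-valued function of $\tilde\psi$.

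For the Arnold part, the ratio $\nabla\tilde\omega/\nabla\tilde\psi$ equals $F'(\tilde\psi)+\epsilon\chi g'(\tilde\psi)$ away from critical points. Near the critical points this ratio extends continuously, since $g$ vanishes there. It is therefore uniformly positive for small $\epsilon$, because $F'(\psi_0)>0$ on $\bar\Omega$ and $\tilde\psi$ is $C^0$-close to $\psi_0$, which gives \eqref{ASC}.

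I expect the main obstacle to be the regularity bookkeeping: making $F$ regular enough, given only that $\psi_0\in C^k$ is Morse, for the implicit function theorem to close, while also ensuring that $\tilde\psi$ is smooth.
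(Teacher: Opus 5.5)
Your argument is correct. It reaches the theorem by a genuinely different, and much more elementary, route than the paper's.

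You prescribe the vorticity as $F(\psi)+\epsilon\chi g(\psi)$, with $\chi$ locally constant near $\{\psi_0\in J\}$, where $J$ is a compact interval of regular values containing a neighborhood of $\operatorname{supp} g$. Then $\nabla\Delta\tilde\psi=\bigl(F'(\tilde\psi)+\epsilon\chi g'(\tilde\psi)\bigr)\nabla\tilde\psi$ holds identically. Steadiness is therefore automatic, and the problem reduces to a semilinear Dirichlet problem with an $x$-dependent nonlinearity. The implicit function theorem solves it using only the invertibility of $\Delta-F'(\psi_0)$.

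The paper starts from the same first-order term. Your $\partial_\epsilon\tilde\psi|_{\epsilon=0}=(\Delta-F'(\psi_0))^{-1}\bigl(\chi g(\psi_0)\bigr)$ is exactly its $\psi_1$, with $\eta=\chi g(\psi_0)$. The paper, however, determines the remainder $\psi_2$ by solving the full steady Euler equation through quasilinearization. This is a Schauder fixed point built on the right inverse $\mathcal L_{\psi_\epsilon}^{-1}$ of the transport operator. It needs the period-function estimates of Proposition \ref{prop:regularity period} and Nadirashvili's $C^{1,1}$ regularity for the bootstrap. It also needs the preliminary replacement of $F$ by an approximately flat nonlinearity, so that every term fed to $\mathcal L_{\psi_\epsilon}^{-1}$ is supported away from the critical level sets.

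Your route buys two things:
\begin{itemize}
\item an exact gap $\epsilon g(s)$ between the vorticity values on the two level-set components, rather than $\epsilon\eta(x_0)+O(\epsilon^2)$;
\item the Arnold ratio $M=F'(\tilde\psi)+\epsilon\chi g'(\tilde\psi)$ as an explicit continuous function on all of $\bar\Omega$. From it, \eqref{ASC} and the invariance $\nabla^\perp\tilde\psi\cdot\nabla M=0$ used in Proposition \ref{prop:linear stablity} are immediate.
\end{itemize}
The paper's route buys a perturbative scheme for steady Euler that presupposes no functional relation on streamline components. The paper reuses it near the degenerate cellular flow, together with the symmetry properties of $\mathcal L^{-1}$ and a Lyapunov--Schmidt parameter.

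Two small corrections to your write-up:
\begin{itemize}
\item $\chi$ cannot literally equal $1$ on $D_1$ and $0$ on $D_2$, since both closures contain the saddle $p$. Impose constancy only on neighborhoods of the components of $\{\psi_0\in J\}$, and do this for all of them, including components outside $D_1\cup D_2$. After that, only $C^0$-closeness of $\tilde\psi$ to $\psi_0$ is needed.
\item Smoothness is not delicate. Extend $F$ in $C^{\lfloor k/2\rfloor-1}$ past the range of $\psi_0$, and replace it by a smooth $F_\delta$ close in that norm. Solve near $\psi_0$: the implicit function theorem already closes in $H^2_0\to L^2$ for $C^1$ nonlinearities since $H^2\subset C^0$ in two dimensions, and the small residual $F(\psi_0)-F_\delta(\psi_0)$ is handled as in Proposition \ref{prop:F approximate flat}. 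Then bootstrap Schauder estimates to get both smoothness and $C^{\lfloor k/2\rfloor-1}$ closeness.
\end{itemize}
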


\begin{remark}
On a family of Neumann ovals,
the unique solution $\psi_0$ to 
\[\Delta\psi_0=1+\lambda\psi_0\] 
vanishing on the boundary satisfies the conditions of both parts of the Theorem \ref{thm: main theorem} for a fixed small $\lambda>0$ (see Proposition \ref{thm:construction}).
By applying Theorem \ref{thm: main theorem} to such a $\psi_0$, there is a smooth linearly stable steady state whose stream function $\tilde{\psi}$ fails to solve a semilinear elliptic equation globally. As $\tilde{\psi}$ is close to $\psi_0$ in $C^2$ and $\psi_0$ is a Morse function, $\tilde{\psi}$ is also a Morse function.
\end{remark}
\begin{figure}[h!]\label{NO}
 \caption{A plot of the Neumann oval and $\psi_0$. Note that the red points are minima of $\psi_0$ points while the green point at the center is a saddle point.}
    \centering
\includegraphics[width=0.5\linewidth]{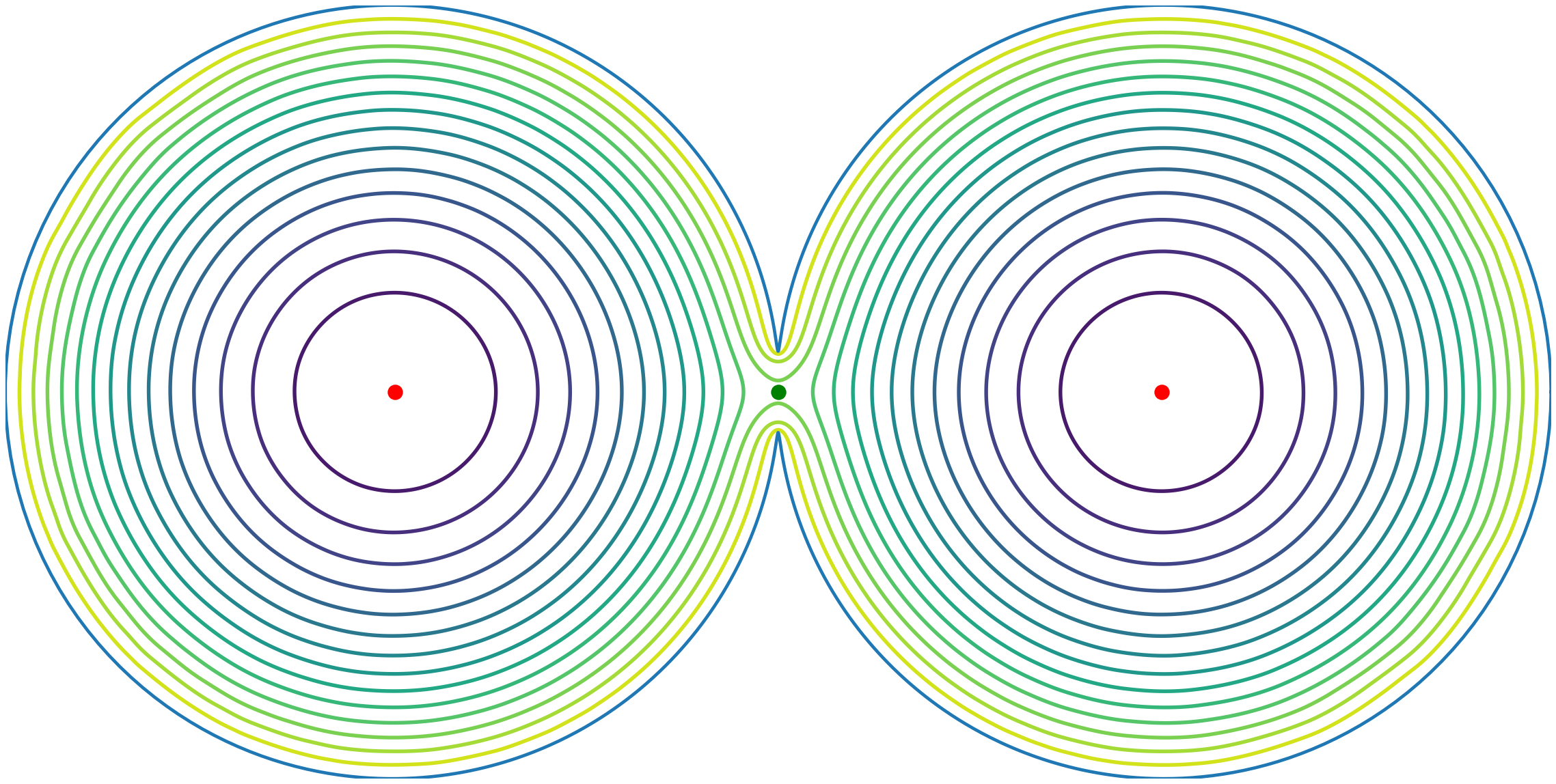}
\end{figure} 
We state a corollary of Theorem \ref{thm: main theorem} and the main theorem in \cite{ElgindiHuangSaidXie_ClassificationSteadyEulerFlows_DMJ} to emphasize the point that these smooth steady states are isolated from analytic ones: 
\begin{cor}\label{cor:isolated}
Let $\Omega$ be a simply connected domain for which there exists a $\psi_0$ satisfying the conditions of Theorem \ref{thm: main theorem}. If we denote by $S_k$ the set of $C^k$ steady states on $\bar\Omega$ (endowed with the $C^k$ topology), we have that the set of analytic steady states $C^\omega\cap S_k$ is not dense in $S_k$ for any $k\geq 3.$ 
\end{cor}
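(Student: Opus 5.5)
The plan is to produce a $C^k$ steady state that has a whole $C^k$ neighbourhood containing no analytic steady state. The constant $k\ge 3$ is fixed throughout. Take the $\psi_0$ given by the hypothesis and pick a large integer $m$ with $\lfloor m/2-1\rfloor\ge k$. Replacing $\psi_0$ if necessary by a $C^m$ solution, we apply Theorem \ref{thm: main theorem}; the regularity level in the hypothesis can be raised because $\psi_0$ solves a semilinear elliptic equation, and if $F$ is smooth elliptic regularity bootstraps. This gives a smooth steady state $\tilde\psi$ that is $C^k$-close to $\psi_0$ and for which no $\tilde F$ with $\Delta\tilde\psi=\tilde F(\tilde\psi)$ exists. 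Since $\psi_0$ is Morse with finitely many critical points and the perturbation is small in $C^k\subset C^2$, $\tilde\psi$ is also Morse. Moreover $\tilde\psi$ is non-radial, because a radial stream function on a simply connected domain would have $\Delta\psi$ as a function of $r$, hence of $\psi$ on monotone pieces.

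The key step is to show that $\tilde\psi$ is not a $C^k$-limit of analytic steady states. Suppose $\psi_n\in C^\omega\cap S_k$ with $\psi_n\to\tilde\psi$ in $C^k$. For large $n$ each $\psi_n$ is non-radial, being close to a non-radial function; for a general domain I would instead use that it is Morse with the same critical-point structure as $\tilde\psi$, which has more than one critical point. By the main theorem of \cite{ElgindiHuangSaidXie_ClassificationSteadyEulerFlows_DMJ}, there are then functions $F_n$ with $\Delta\psi_n=F_n(\psi_n)$. We need to pass to the limit and obtain a single-valued $\tilde F$.

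Fix two points $x,y\in\Omega$ with $\tilde\psi(x)=\tilde\psi(y)$ and $\Delta\tilde\psi(x)\neq\Delta\tilde\psi(y)$; they exist because $\tilde F$ fails to exist. Choose them to be regular points, and argue this by genericity: if the violation happened only on critical levels, continuity along the regular level sets would force a function relation elsewhere. Near $x$ and $y$ the level sets of $\psi_n$ converge in $C^1$ to those of $\tilde\psi$, since the gradient is nonvanishing. So by the implicit function theorem there are points $y_n\to y$ with $\psi_n(y_n)=\psi_n(x)$. Then
\[
\Delta\psi_n(x)=F_n(\psi_n(x))=F_n(\psi_n(y_n))=\Delta\psi_n(y_n).
\]
Using $C^2$ convergence and letting $n\to\infty$ gives $\Delta\tilde\psi(x)=\Delta\tilde\psi(y)$, a contradiction. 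Hence a $C^k$-ball around $\tilde\psi$ contains no analytic steady states, so $C^\omega\cap S_k$ is not dense.

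The main obstacle is the regularity bookkeeping: guaranteeing that $\tilde\psi$ lies in $S_k$ and that Theorem \ref{thm: main theorem} applies at a regularity level high enough to control the $C^k$ topology. The theorem only gives closeness in $C^{\lfloor k/2-1\rfloor}$, so the input $\psi_0$ must be taken $C^{2k+4}$. I would handle this by noting that the hypothesis on $\Omega$ can be met with smooth $\psi_0$, as in the Neumann oval example of the Remark. Failing that, I would use the fact that only $C^2$-closeness is needed in the limiting argument above, together with the smoothness of $\tilde\psi$.
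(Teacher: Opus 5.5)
Your proposal is correct and follows essentially the paper's route: the corollary comes from the fact (Propositions \ref{thm:main} and \ref{lemma:prop2.1}) that a whole $C^2$-neighbourhood of $\tilde\psi$ contains no steady state admitting a global $F$, combined with the classification theorem of \cite{ElgindiHuangSaidXie_ClassificationSteadyEulerFlows_DMJ}. Your limiting argument with $y_n\to y$ is the sequential version of how the paper obtains that neighbourhood clause, namely by comparing vorticity at points on the two components $\Gamma_1,\Gamma_2$ of a regular level set; using those points also replaces your vague genericity step. Two minor corrections. The regularity bookkeeping you flag is not an obstacle: $\tilde\psi$ is smooth, so $\tilde\psi\in S_k$, and only $C^2$ convergence enters. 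Non-radiality of the $\psi_n$ should come from your fallback (two nondegenerate critical points, which no radial function can have), not from mere closeness to a non-radial function.
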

Taken together, these results suggest a different picture of the space of
smooth steady Euler flows from the one in analytic steady Euler flows. In the category of non-radial analytic functions, the steady equation enforces a global functional relation
between the vorticity and the stream function. Thus analytic steady states are
closely tied to semilinear elliptic equations and to the classical
energy--Casimir variational framework. Our results show that this picture is
not robust in the smooth category. Even near steady states satisfying
Arnold's stability criterion and Morse assumptions, there are smooth steady states for which the
vorticity is not a single-valued function of the stream function. In particular,
Corollary~\ref{cor:isolated} shows that these steady states are not
merely isolated examples: analytic steady states fail to be dense in the smooth
steady-state set. This reflects that the
topological structure of the smooth steady states is substantially richer than that of
the analytic steady states.\par 
Theorem~\ref{thm: main theorem} treats a nondegenerate regime, where
$\Delta-F'(\psi_0)$ is invertible. A natural test case beyond this regime is
the cellular flow $\psi_0=\sin x\sin y$ on the flat torus. Here the associated
operator is $\Delta+2$, which has a nontrivial kernel, and therefore the
nondegenerate construction cannot be applied directly.

Our second main construction shows that this degeneracy does not restore the
semilinear elliptic structure. Instead, in a proper subspace of $L^2$, one can still construct smooth Morse steady
states near the cellular flow for which no global relation
$\Delta\psi=F(\psi)$ exists. Moreover, such flexibility phenomena cannot be observed in the odd-odd symmetric class.
\begin{theorem}[Flexibility near the cellular flow]\label{thm:Flexibility near the cellular flow}
There exists a family of smooth Morse steady states $\psi_\epsilon$ converging to $\sin{x}\sin{y}$
in the smooth category as $\epsilon \rightarrow 0$ such that there is no function $F_{\epsilon}$  for which 
\[
\Delta\psi_\epsilon=F_{\epsilon}(\psi_\epsilon).
\]
Moreover, $\psi_{\epsilon}$ is not odd--odd symmetric and is not even
symmetric with respect to any diagonal of $\mathbb T^2$.
 In contrast, within the odd--odd symmetry class, every steady state close to $\psi_0$ solves a semilinear elliptic equation and it is even symmetric to both diagonals of $\mathbb{T}^2$.
\end{theorem}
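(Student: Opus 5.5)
The plan is to construct $\psi_\epsilon$ so that, inside each of the four cells of $\sin x\sin y$ on $\mathbb{T}^2=[0,2\pi)^2$, it solves a semilinear equation $\Delta\psi_\epsilon=F_\epsilon^{(i)}(\psi_\epsilon)$, but with different nonlinearities $F^{(i)}_\epsilon$ on cells where $\psi_\epsilon$ takes overlapping ranges of values. Each cell is bounded by the separatrix network, on which $\psi_0=0$. Cells of the same sign, e.g.\ $(0,\pi)^2$ and $(\pi,2\pi)^2$ where $\psi_0>0$, are interchanged by the translation $(x,y)\mapsto(x+\pi,y+\pi)$. If a perturbation uses $F^{(1)}$ on one and $F^{(2)}\neq F^{(1)}$ on the other, with the same range of $\psi$ values, then no global $F_\epsilon$ can exist. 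Smoothness across the separatrices is the real constraint: at a hyperbolic saddle, the four branches of the level set $\{\psi=0\}$ meet, so $\Delta\psi$ restricted to each cell must extend smoothly through the saddle. This forces the jets of $F^{(i)}$ at $0$ to be compatible, i.e.\ to agree to infinite order at $\psi=0$.

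So I take $F^{(i)}_\epsilon(s)=-2s+\epsilon\, g_i(s)$ with $g_i$ smooth and flat at $s=0$. I choose $g_1\neq g_2$ on the positive values near the maximum, and negative-value functions $g_3,g_4$ on the negative cells. I then look for a global solution $\psi_\epsilon=\psi_0+\epsilon\phi$ of the piecewise equation
\[
\Delta\psi=-2\psi+\epsilon\, G(\psi,x),\qquad G(s,x)=g_i(s)\ \text{if } x\in\text{cell } i .
\]
Since $G$ is flat on $\{\psi=0\}$, the right-hand side is smooth provided the zero set of $\psi_\epsilon$ stays close to the original separatrices. The operator $\Delta+2$ has the kernel spanned by $\sin x\sin y,\ \sin x\cos y,\ \cos x\sin y,\ \cos x\cos y$, so I would run a Lyapunov–Schmidt reduction. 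First I solve on the complement of the kernel by a contraction in $C^{k,\alpha}$. I handle the changing cell decomposition by defining $G$ via $\mathrm{sign}(\psi)$ together with a smooth partition that distinguishes the two positive cells; the partition is harmless because $g_i$ vanishes wherever the cells meet. Then I solve the finite-dimensional bifurcation equation. The kernel directions correspond to translations, which I fix by imposing a symmetry: $\psi$ even in $x\mapsto -x+\pi$, say. Rescaling the amplitude along $\sin x\sin y$ then gives the last free parameter.

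\textbf{Main obstacle.} The solvability conditions, i.e.\ the projections of $G(\psi_0,\cdot)$ onto the kernel, must vanish. I would choose $g_1,g_2,g_3,g_4$ to satisfy these finitely many linear constraints while keeping $g_1\neq g_2$. There are infinitely many degrees of freedom and only four constraints, so this should be achievable. Generic choices also break odd–odd symmetry and diagonal symmetry, since those symmetries would force $g_1=g_2$ or the corresponding identifications. The Morse property holds for small $\epsilon$ because $\psi_0$ is Morse on $\mathbb{T}^2$.

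\textbf{Rigidity in the odd–odd class.} In the odd–odd class, the kernel of $\Delta+2$ restricted to odd–odd functions is one-dimensional, spanned by $\sin x\sin y$. I would consider steady states near $\psi_0$ and use the classification of \cite{ElgindiHuangSaidXie_ClassificationSteadyEulerFlows_DMJ}-type arguments on a single cell. The cell $(0,\pi)^2$ is a square, and on it $\psi$ vanishes on the boundary, is near $\sin x\sin y$ and has a single critical point. For such flows, a single-cell rigidity or symmetry result in the spirit of \cite{ruiz2023symmetry} gives $\Delta\psi=F(\psi)$ on the cell, with $F$ determined by the period function. Odd reflections then transport the relation to the other cells with $F$ odd, giving a global $F$. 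Diagonal symmetry follows from a moving-plane or uniqueness argument for the Dirichlet problem near the nondegenerate first eigenfunction. I expect the odd–odd rigidity to be the hardest and most delicate step to justify from results stated earlier.
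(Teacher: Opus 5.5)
Your construction for the first half is a different route from the paper's and can work, but its kernel bookkeeping has a hole. The rigidity half has a more serious gap: the diagonal symmetry is not proved.

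\textbf{The construction.} The paper keeps the ansatz $\psi_0+\epsilon\psi_1+\epsilon^2\psi_2$. It solves for $\psi_2$ through the right inverse $\mathcal L^{-1}_{\psi_\epsilon}$ of the transport operator, inside the class $V$ (odd in $x$, centrally symmetric about $(\frac{\pi}{2},\frac{\pi}{2})$). It removes the last kernel direction by adding a free multiple $A\psi_\epsilon$. Your cell-wise ansatz instead keeps the problem semilinear, so a plain contraction suffices and the failure of a global $F$ is exact. Three repairs are needed.

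First, ``flat at $0$'' is not enough. The saddle connections of $\sin x\sin y$ break under perturbation, so level curves near the saddles can pass from one cell into another. Your partition then gives a steady state only if $g_i\equiv0$ on a neighbourhood of $0$; the paper takes $G$ supported in $[\frac13,\frac23]$.

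Second, the symmetry you impose does not handle every kernel direction. Evenness under $x\mapsto\pi-x$ removes $\cos x\sin y$ and $\cos x\cos y$. The latter is not a translation mode: $\sin x\sin y+\cos x\cos y=\cos(x-y)$ is a shear. But $\sin x\cos y=\partial_y\psi_0$ survives, because $y$-translations commute with that reflection. In this direction the reduced equation is degenerate. Take $G(s,x)=\sum_i\chi_i(x)g_i(s)$ with $g_i\equiv0$ near $0$; integrating by parts and using $\partial_y^2\psi_0=-\psi_0$ gives $\langle\partial_sG(\psi_0,\cdot)\,\partial_y\psi_0,\partial_y\psi_0\rangle=\langle G(\psi_0,\cdot),\psi_0\rangle$. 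This vanishes by your own solvability constraint, so the implicit function theorem cannot be used in that direction. Either impose also evenness under $y\mapsto\pi-y$ (it preserves every cell, so the $g_i$ stay free, and only $\sin x\sin y$ remains), or invoke the Noether identity $\langle\Delta\psi+2\psi-\epsilon G(\psi,\cdot),\partial_y\psi\rangle=0$, valid once $g_i\equiv0$ near $0$.

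Third, the amplitude step silently needs $\sum_i\int_{\mathrm{cell}\,i}g_i'(\psi_0)\psi_0^2\neq0$. The analogue of the paper's $A\psi_\epsilon$ device avoids this: replace $-2\psi$ by $-(2+\epsilon\mu)\psi$ and solve the $\sin x\sin y$ condition for $\mu$, which needs no such condition.

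\textbf{The rigidity.} Semilinearity in the odd--odd class needs none of the results you cite. Odd--odd symmetry makes $\psi$ vanish on $\partial(0,\pi)^2$. Then $C^2$-closeness gives $\psi>0$ on $(0,\pi)^2$ with a single nondegenerate critical point. So each level set in the cell is one closed curve, on which the steady equation makes $\Delta\psi$ constant, and odd reflection extends $F$; this is the paper's last step.

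The diagonal symmetry is the missing part. Moving planes requires $F$ Lipschitz, and closeness in $H^{3+\kappa}$ (the paper's topology) does not provide that. Nothing controls $(F'+2)=\nabla(\Delta+2)\psi\cdot\nabla\psi/|\nabla\psi|^2$ pointwise, and Hölder cusps in $F$ are not excluded. A uniqueness argument ``near the first eigenfunction'' is degenerate, because $\sin x\sin y$ spans the kernel of $\Delta+2$ on the cell. What helps is not the simplicity of that eigenvalue but the fact that the diagonally odd part avoids the kernel.

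The paper's idea runs as follows.
\begin{itemize}
\item Write $\psi-\psi_0=\psi_{ee}+\psi_{oo}$, the parts even and odd with respect to the diagonals.
\item The diagonally even part of the steady equation is the linear transport equation $\nabla^\perp(\psi_0+\psi_{ee})\cdot\nabla(\Delta+2)\psi_{oo}=2\nabla^\perp\psi_{ee}\cdot\nabla\psi_{oo}-\nabla^\perp\psi_{oo}\cdot\nabla\Delta\psi_{ee}$. Its Hamiltonian $\psi_0+\psi_{ee}$ is diagonally symmetric.
\item $(\Delta+2)\psi_{oo}$ is odd under the reflection, so it has zero mean on every level curve. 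This gives a Poincaré inequality with weight $T^{-2}$, where $T$ is the period.
\item Combine this with $T\in L^p$ (only logarithmic blow-up at the separatrices), Hölder's inequality, and invertibility of $\Delta+2$ on odd--odd, diagonally odd functions.
\end{itemize}
The result is $\|\psi_{oo}\|_{W^{1,p}}\lesssim\|\psi_{ee}\|_{H^{3+\kappa}}\|\psi_{oo}\|_{W^{1,p}}$, hence $\psi_{oo}=0$, with no information about $F$ required.

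Your uniqueness idea can be made rigorous under the stronger hypothesis of $C^4$-closeness. Then $|F'+2|\lesssim\delta$, and $w=\psi-\psi\circ R$ satisfies $\Delta w=cw$, where $R$ is the diagonal reflection and $c$ is a difference quotient of $F$ close to $-2$. This contradicts $\int|\nabla w|^2\ge5\int w^2$ for diagonally odd $w\in H^1_0$ of the cell unless $w=0$. That is a stronger hypothesis than the paper's $H^{3+\kappa}$.
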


\subsection{Discussion}
The main question to be asked after Theorem \ref{thm: main theorem} is whether such steady states can be nonlinearly stable. As far as we know, the known nonlinear stability theorems rely on variational principles associated with the conserved quantities \eqref{CLs}, which seems to require the existence of a global functional relation between the stream function $\psi$ and the vorticity $\Delta\psi.$ Since Theorem \ref{thm: main theorem} indicates that we should not always expect the existence of such a global relationship, at least in the smooth category, it is important to expand the nonlinear stability theory to accommodate multi-valued relations between the stream function and the vorticity. For this, we will only be able to propose one direction for future study: namely, the study of nonlinear stability in $L^\infty$ of vorticity. Indeed, in the examples of steady states that we give for which no (single-valued) global relation $F$ exists, the vorticity roughly consists of two bumps with slightly different heights. After a long time, it could be that perturbations cause the two bumps to equalize thus returning to a state with a global $F.$ On the other hand, for this to happen, the solution must first deviate quite a bit from the base steady state in $L^\infty.$ This is because the Euler equation is fundamentally a transport equation and moving particles between the bumps requires particles leaving regions of high vorticity and passing through regions of small vorticity. We note that there are no steady states with non-constant vorticity for which $L^\infty$ stability is known; in fact, to our knowledge, even quantitative stability beyond $L^2$ is unknown in general (even though $L^2$ stability implies $L^p$ stability qualitatively for $2<p<\infty$). Resolving this problem, which is the subject of a question of Yudovich \cite{yudovich2003eleven} (see also \cite{drivas2023singularity}), seems to be the first step towards determining whether general smooth steady states can be nonlinearly stable (even in $L^2$). For the sake of completeness, and to highlight its importance, we state the question clearly:
\begin{question}\label{Q2}
Does there exist any steady state $\omega_*$ on any path connected domain $\Omega$ that is non-constant and also nonlinearly stable in $L^\infty?$
\end{question}
\begin{remark}
In \cite{yudovich2003eleven,drivas2023singularity}, this problem is stated slightly differently. We have elected to state it this way to reflect our (lack of) knowledge on the problem. 
\end{remark}
\begin{remark}
We prove in Proposition \ref{prop:nonlinear stability} that linear stability in the sense of Arnold implies long-time stability in $L^2$ (and can be generalized to $L^\infty$). Whether this can be done globally in time is an interesting question. 
\end{remark}

\subsection{Main conceptual and technical ideas }
\subsubsection{Quasilinearization}\label{sec:Quasilinearization}
One of the contributions of this work is to give a proof of flexibility for 2D Euler steady states relying directly on the linearization of the steady 2D Euler equation rather than the semilinear elliptic equation. Indeed, in the previous works on this subject \cite{choffrut2012local,constantin2021flexibility,coti2023stationary}, the steady Euler equation relied on the semilinear elliptic equation, where it is possible to apply a version of the implicit function theorem in infinite dimensions. Applying the implicit function theorem directly to the steady 2D Euler equations is much more difficult since the equation is quasilinear; this means that there is a loss of derivative that appears when treating the nonlinear term perturbatively. This difficulty was discussed in some detail at the beginning of Section 2 of \cite{coti2023stationary}. To address this issue, the first named author, Drivas, and Ginsberg, proposed a general scheme to "quasi-linearize" the system rather than just linearize it. Let us discuss this in detail. Let $\omega_0$ be the vorticity of a steady state, and assume that $\omega_0+\omega_1$ corresponds to the vorticity of a new steady state (with $\omega_1$ small in some strong topology). Then $\omega_1$ satisfies:
\[\mathcal{L}_0(\omega_1)+u_1\cdot\nabla\omega_1=0,\] with 
$\mathcal{L}_0$ just the linearization around $\omega_0.$ Now, if we were to simply throw the nonlinear term on the right side, we would have:
\begin{equation}\label{badscheme}\mathcal{L}_0(\omega_1)=-u_1\cdot\nabla \omega_1.\end{equation} While \eqref{badscheme} looks good for a fixed point argument based on the smallness of $\omega_1,$ we note that $\mathcal{L}_0^{-1}$ gains a derivative in the direction of $u_0$ while the right side loses a derivative in the direction of $u_1.$ Previous numerical simulations indicate that an iteration based on this does not actually converge (as is expected). However, let us inspect \eqref{badscheme} further by making clear what $\mathcal{L}_0$ is:
\begin{equation}\label{perturbedeqn}u_0\cdot\nabla\omega_1+u_1\cdot\nabla \omega_0+u_1\cdot\nabla\omega_1=0.\end{equation} Now we observe that since $\omega_0$ is a steady state, we define the function 
\[M:=\frac{\nabla\omega_0}{\nabla\psi_0},\] which formally satisfies:
\[\nabla^\perp\psi_0\cdot\nabla M=0,\] using the inverse function theorem (or the straightening lemma). Thus, we observe that \eqref{perturbedeqn} may be written as:
\[u_0\cdot\nabla\omega_1+Mu_1\cdot\nabla\psi_0+u_1\cdot\nabla\omega_1=0,\] which is equivalent to:
\[u_0\cdot\nabla\omega_1-Mu_0\cdot\nabla\psi_1+u_1\cdot\nabla\omega_1=0,\]
so that
\[u_0\cdot\nabla\omega_1-u_0\cdot\nabla (M\psi_1)+u_1\cdot\nabla\omega_1=0,\]
which gives 
\[u_0\cdot\nabla(\omega_1-M\psi_1)+u_1\cdot\nabla\omega_1=0.\] Adding and subtracting the same term, we see that the equation \eqref{badscheme} can be rewritten as:
\begin{equation}\label{goodscheme}(u_0+u_1)\cdot\nabla(\omega_1-M\psi_1)=-u_1\cdot\nabla (M\psi_1).\end{equation}
Now, assuming that the operator on the left is invertible, we see that the loss of derivative problem of \eqref{badscheme} has disappeared and \eqref{goodscheme} can be solved via standard fixed point theorems: either the Banach fixed point theorem with an iteration or directly with the Schauder fixed point theorem. Now the key is simply to show that the invertibility properties of $u_0\cdot\nabla$ are "stable" to the small perturbation by $u_1.$ That the right-hand side is actually in the range of the operator $(u_0+u_1)\cdot\nabla$ follows from the following identity:
\[(u_0+u_1)\cdot\nabla (\omega_0+\omega_1)-u_0\cdot\nabla \omega_0 = (u_0+u_1)\cdot\nabla (\omega_1-M\psi_1)+u_1\cdot\nabla (M\psi_1).\]

\subsubsection{Solving  $(u_0+u_1)\cdot\nabla f =g$}\label{sec:Solving  Hamiltonian}
As we discussed in the previous section, a crucial step in the proof of Theorem \ref{thm: main theorem} is to find a solution to $(u_0+u_1)\cdot\nabla f =g$, where $u_1$ is small. As $u_0$ and $u_1$ are divergence free, it reduces to studying the equation of the form 
\begin{equation}\label{eqn: Hamiltonian flow formula}
    \nabla^{\perp}(H_0+H_1)\cdot\nabla f =g.
\end{equation} 
There are two main challenges. The first comes from verifying the solvability of \eqref{eqn: Hamiltonian flow formula}, and we need to verify that $f$ lies in the range of $ \nabla^{\perp}(H_0+H_1)\cdot\nabla$. The second comes from the kernel of the operator $ \nabla^{\perp}(H_0+H_1)\cdot\nabla$ being infinite dimensional, and we need to choose a right inverse of  $\nabla^{\perp}(H_0+H_1)\cdot\nabla$ and prove it is a bounded operator on H\" older spaces. In this paper, we choose a right inverse of $\nabla^{\perp}(H_0+H_1)\cdot\nabla$  such that it is mean-free on each connected component of the level set of $H_0+H_1$.
\subsubsection{Construction of a Morse steady state with multiple critical points}\label{sec:Construction of a Morse steady states with multiple critical points}
Our first step is to construct a Morse steady state in a simply connected domain with multiple critical points (see the work \cite{lin2004some} for a related but different construction). In particular, we need an example where both parts of Theorem \ref{thm: main theorem} are applicable. The stream function $\psi_0$ of the example we construct solves \[\Delta \psi_0=1+\lambda \psi_0\] for  small $\lambda>0$. Due to the smallness of $\lambda$, $\psi_0$ is close to $\Delta^{-1}(1)$ and therefore the key in this part is to find a domain such that $\Delta^{-1}(1)$ is a Morse function with multiple critical points. We choose the domain to be a non-convex Neumann oval, which is the image of the unit disk under an explicit holomorphic map. In this case, we have an explicit Taylor expression for $\Delta^{-1}(1)$. We explicitly compute the locations of the critical points and analyze the behavior of $\Delta^{-1}(1)$ near them, concluding that $\Delta^{-1}(1)$ is a Morse function with three critical points.     
\subsubsection{The degenerate cellular flow case}\label{sec:Flexibility  and rigidity results near cellular flow}
The cellular flow provides a canonical degenerate setting in which the
nondegenerate theory of Theorem~\ref{thm: main theorem} cannot be applied.
For $\psi_0=\sin x\sin y$, the relevant Schrödinger operator is $\Delta+2$,
which has a nontrivial kernel. This kernel is the main new difficulty in the
construction: one must choose the perturbation and a proper right inverse of the Hamiltonian to ensure the compatibility conditions.

The main result of Section~3 is that this degeneracy does not restore the
semilinear elliptic structure. By performing a Lyapunov-Schmidt argument, we
construct smooth Morse steady states arbitrarily close to the cellular flow
which do not satisfy any global relation \(\Delta\psi=F(\psi)\). The odd--odd
class provides a useful contrast: in that symmetry class, the directions
responsible for breaking the semilinear elliptic structure are excluded, and
the semilinear elliptic structure is preserved.
\subsection{Acknowledgments} The authors thank T. Drivas and D. Ginsberg for helpful comments and previous discussions on this topic. T.M.E. acknowledges partial funding from the NSF DMS-2510472 as well as a Simons Travel Grant. Y. H. acknowledges partial funding from EPSRC Horizon Europe Guarantee EP/X020886/1. 
\subsection{Outline of the paper}
In Section 2, we prove Theorem \ref{thm: main theorem}. We first explain the details in Section \ref{sec:Quasilinearization} and Section \ref{sec:Solving  Hamiltonian} and construct steady states that break the semilinear elliptic equation near the Arnold-stable steady state with approximately flat forcing given in Definition \ref{def:approximate flat}. Then, we construct Arnold-stable steady states in a Neumann oval such that the stream function has multiple critical points and is a Morse function. Next, we show that a general Arnold-stable steady state can be approximated by an Arnold-stable steady state with approximately flat forcing. As a consequence, we finish the first part of Theorem \ref{thm: main theorem}. In the end of this section, we use the Casimir to show that the steady states we constructed are linearly stable. In Section~3, we turn to the degenerate cellular-flow regime. Since the
operator $\Delta+2$ has a nontrivial kernel, the nondegenerate construction
from Section~2 does not apply directly. We perform a Lyapunov-Schmidt argument and construct smooth Morse steady states near the cellular
flow which break the semilinear elliptic structure. We also include a
symmetry-restricted contrast in the odd--odd class. In the appendices, we list facts from complex analysis and action-angle coordinates that we will use in the paper.
\section{On the construction of steady states that break the semilinear elliptic equation structure}
In this section, we first discuss the existence of steady states that break the semilinear elliptic equation structure.
\begin{proposition}\label{thm:main}
    Let $\psi_0$ be a Morse function that solves the following semilinear elliptic equation in a simply connected smooth domain $\Omega$: 
    \begin{equation}
    \begin{aligned}
         &\Delta \psi_0=F(\psi_0)\text{ in $\Omega$}\\&
         \psi_0=0 \text{ in $\partial \Omega$. }
    \end{aligned}
    \end{equation} 
   
  Let $k\geq 6$ be a positive integer. Assume $F$ is $C^{\lfloor\frac{k}{2}\rfloor-1}$, that $\Delta-F^{'}(\psi_0)$ is invertible from $H_{0}^{2}$ to $L^2$ and that $\psi_0$ has more than one critical point. Then there exists a family of smooth steady states $\psi_{\epsilon}$ converging to $\psi_0$ in the $C^{\lfloor\frac{k}{2}-1\rfloor}$ category as $\epsilon \rightarrow 0$ with the property that $\psi_{\epsilon}$ does not solve a semilinear elliptic equation in $\Omega$. Moreover, there is a small $C^{2}$ neighborhood $\mathcal{O}_{\epsilon}$ of $\psi_{\epsilon}$ in which all steady states do not solve a semilinear elliptic equation.
\end{proposition}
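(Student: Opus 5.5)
We construct $\psi_\epsilon=\psi_0+\psi_1$ via the quasilinearization of Section \ref{sec:Quasilinearization}. Because $\psi_0$ is Morse and solves $\Delta\psi_0=F(\psi_0)$, the ratio $M=\nabla\omega_0/\nabla\psi_0$ equals $F'(\psi_0)$, a $C^{\lfloor k/2\rfloor-2}$ function constant on level sets of $\psi_0$. The first step is a reduction: by the approximation result announced in the outline, we replace $\psi_0$ by a nearby steady state whose forcing is approximately flat in the sense of Definition \ref{def:approximate flat}. Concretely, $F$ is modified so that it is constant on a small interval around each critical value. This keeps $\psi_0$ Morse, keeps $\Delta-F'(\psi_0)$ invertible (invertibility is an open condition), and makes $M$ locally constant near the critical points. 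That local constancy is what the Hölder estimates for the Hamiltonian inverse near saddles will need.

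The ansatz is to write the new vorticity as
\[\omega_1-M\psi_1=\epsilon\, g+h,\]
where $g$ is a smooth bump that fails to be a function of $\psi_0$. Since $\psi_0$ has at least two critical points, the level set $\{\psi_0=c\}$ for some regular value $c$ has at least two connected components $\gamma_1,\gamma_2$ (for instance, around two distinct extrema). We take $g=\chi(\psi_0)\,(\mathbf 1_{U_1}-\mathbf 1_{U_2})$, smoothed, where $U_i$ are thin annular neighborhoods of $\gamma_i$. Then $g$ is constant on each connected component of the level sets, so $u_0\cdot\nabla g=0$, but it takes different values on $\gamma_1$ and $\gamma_2$.

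Given $\omega_1-M\psi_1=:\Phi$, we recover $\psi_1$ by solving
\[(\Delta-M)\psi_1=\Phi,\qquad \psi_1\in H^2_0,\]
using the invertibility assumption and Schauder theory. We then solve \eqref{goodscheme} for the correction $h$:
\[(u_0+u_1)\cdot\nabla h=-u_1\cdot\nabla(M\psi_1).\]
For this we use the right inverse of Section \ref{sec:Solving  Hamiltonian} for $\nabla^\perp(\psi_0+\psi_1)\cdot\nabla$, normalized to be mean-free on each connected component of the level sets of $\psi_0+\psi_1$. Solvability follows from the identity at the end of Section \ref{sec:Quasilinearization}: the right-hand side equals $(u_0+u_1)\cdot\nabla(\omega_0+\omega_1)$ minus a transport term, so its averages over closed streamlines of $\psi_0+\psi_1$ vanish. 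That is exactly the compatibility condition for the Hamiltonian equation. A Schauder or Banach fixed point argument in a Hölder ball of radius $O(\epsilon^2)$ for $h$ then closes, because the right-hand side is quadratic in $\epsilon$. Smoothness of $\psi_\epsilon$ follows by bootstrapping, or by mollifying the data. Tracking the losses in the chain $\Delta^{-1}$, transport inverse, $M$ gives convergence in $C^{\lfloor k/2-1\rfloor}$.

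\textbf{Main obstacle.} The hard step is proving uniform Hölder bounds for the right inverse of $\nabla^\perp(\psi_0+\psi_1)\cdot\nabla$ near the hyperbolic saddle, where the period of the streamlines diverges logarithmically. I would handle this with action-angle coordinates (Appendix), exploiting the flatness of $M$ near critical points so that the source vanishes there. I would also need to show that the separatrix structure, and hence the component decomposition, is stable under $C^2$ perturbation of a Morse function.

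It remains to show that $\tilde\omega=\omega_0+\omega_1$ is not a function of $\tilde\psi$. On $\gamma_1$ and $\gamma_2$, $\tilde\psi$ takes values $c+O(\epsilon^2)$-close to each other after small adjustments; more precisely, we choose points $p_i\in U_i$ on a common level of $\tilde\psi$. At these points
\[\tilde\omega(p_1)-\tilde\omega(p_2)=F(c)-F(c)+2\epsilon+O(\epsilon^2)\neq 0.\]
The terms $M\psi_1$ agree to leading order there because $M$ is a function of $\psi_0$ and $\psi_1$ is nearly a function of the level. The same strict inequality persists for any steady state $C^2$-close to $\psi_\epsilon$: both components persist, and the vorticity difference depends continuously on the stream function in $C^2$. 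This gives the neighborhood $\mathcal O_\epsilon$.
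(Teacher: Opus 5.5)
Your architecture matches the paper's: a quasilinearized fixed point, the right inverse of $\nabla^\perp(\psi_0+\psi_1)\cdot\nabla$ that is mean-free on each level-set component, solvability from the identity in Section~\ref{sec:Quasilinearization}, a bump on one of two components of a regular level set near a saddle, and a vorticity comparison at two points of a common level. The gap is in the flattening step and in what you make it carry. You make $F$ constant near each critical value, so that $\nabla M=0$ on a band around every critical level set. Then the whole source $-u_1\cdot\nabla(M\psi_1)=-\psi_1\,u_1\cdot\nabla M$ vanishes there, and the transport inverse never meets a saddle.

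But this approximation cannot give the convergence the statement claims. The new base state satisfies $\nabla\Delta\bar\psi=F_\delta'(\bar\psi)\nabla\bar\psi=0$ on the band. By contrast, $\nabla\Delta\psi_0=F'(\psi_0)\nabla\psi_0$ has size about $|F'(c)|\,|\nabla\psi_0|$ along the separatrix loops of a saddle at level $c$, where $|\nabla\psi_0|$ is bounded below. Hence $\|\bar\psi-\psi_0\|_{C^3}\gtrsim|F'(c)|$ however thin the band. Nothing later can repair this, since the subsequent steps converge to $\bar\psi$; the loss is not in ``the chain $\Delta^{-1}$, transport inverse, $M$''. Affine flattening (all that ``$M$ locally constant'' requires) only moves the obstruction to $C^4$ when $F''(c)\neq0$. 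So your route is a legitimate simplification when the target topology is $C^2$ (or $C^3$ with affine flattening), but it does not give $C^{\lfloor\frac k2-1\rfloor}$ convergence for general $k$. (Constant flattening would also kill $F'>0$, which the stability half of Theorem~\ref{thm: main theorem} needs.)

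To reach $C^{\lfloor\frac k2-1\rfloor}$ you may flatten only at order $m=\lfloor\frac k2\rfloor$, as in Definition~\ref{def:approximate flat} and Proposition~\ref{prop:F approximate flat}. Then $F$ is a polynomial of degree $m-1$ near critical values, and $\nabla M=F''(\psi_0)\nabla\psi_0$ does not vanish near the separatrices. Applying the mean-free inverse to your source then puts you exactly in the regime you flag as the main obstacle: the period diverges logarithmically, and the inverse is in general not H\"older across the separatrix. The proposal offers no argument for this case.

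The missing idea is the algebraic peeling of Proposition~\ref{prop:g qualitative}. In your notation, with $X=u_0+u_1$, one has $-u_1\cdot\nabla(M\psi_1)=F''(\psi_0)\,X\cdot\nabla(\psi_1^2/2)$ and $X\cdot\nabla\psi_0=-X\cdot\nabla\psi_1$. Repeatedly moving $F^{(j)}(\psi_0)$ inside the derivative gives
\[
-u_1\cdot\nabla(M\psi_1)=X\cdot\nabla\Bigl(\sum_{j=2}^{m-1}\frac{F^{(j)}(\psi_0)\,\psi_1^j}{j!}\Bigr)+F^{(m)}(\psi_0)\,X\cdot\nabla\Bigl(\frac{\psi_1^m}{m!}\Bigr).
\]
The first term is inverted by hand: the kernel element is chosen explicitly rather than by mean-free normalization (this is the sum in \eqref{eqn:Llong}). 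Only the second term and the bump term pass through $\mathcal L^{-1}_{\psi_\epsilon}$. Both are supported away from the critical level sets, so the saddle is never confronted.

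There are also two minor slips. First, your $h$-equation drops $-\epsilon\,u_1\cdot\nabla g$; this is harmless, since that term lies in the range and is supported away from critical levels. Second, in the final comparison neither $\omega_0(p_i)=F(c)$ nor agreement of the $M\psi_1$ terms holds separately at order $\epsilon$. The $O(\epsilon^2)$ error actually comes from $F(\psi_0)+F'(\psi_0)\psi_1=F(\tilde\psi)+O(\epsilon^2)$ at points where $\tilde\psi$ takes the same value.
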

\begin{remark}
    By the inverse function theorem, the conditions that $\psi_0$ is a $C^{k}$ Morse function and $\Delta \psi_0=F(\psi_0)$ imply that $F\in C^{\lfloor\frac{k}{2}\rfloor-1}$. As a result, Proposition \ref{thm:main} implies the first part of Theorem \ref{thm: main theorem}.
\end{remark}
\begin{remark}
Examples of domain $\Omega$ and solution $\psi_0$ in Proposition \ref{thm:main} are given in Proposition \ref{thm:construction}. 
\end{remark}
\subsection{Proof of Proposition \ref{thm:main} when $F$ is approximately \textbf{$m$-flat} with respect to $\psi_0$}
\begin{definition}\label{def:approximate flat}
Let $m$ be a positive integer, we say $F$ is approximately \textbf{$m$-flat} if 
    \begin{itemize}
\item $F\in C^\infty$
\item $F^{(m)}$ \footnote{$F^{(m)}$ denotes the $m$-th derivative of $F$.}  vanishes in a neighborhood of critical values of $\psi_0.$ 
\end{itemize}
\end{definition}
In this section, we prove Proposition \ref{thm:main} when $F$ is approximately $(\lfloor\frac{k}{2}\rfloor)-\text{flat}$. In this case, $\psi_0$ is smooth, and we show that $\psi_{\epsilon}$ is  close to $\psi_0$ in the smooth category.
These assumptions will be realized in Section \ref{sec:conclusion}  via a standard approximation argument. 
\begin{proposition}\label{lemma:prop2.1}
    Under the setting of Proposition \ref{thm:main}, if we assume that $F$ is approximately $(\lfloor\frac{k}{2}\rfloor)-\text{flat}$, then there exists a family of smooth functions $\psi_{\epsilon}$ satisfying \begin{equation}
    \begin{aligned}
         &\nabla^{\perp} \psi_{\epsilon}\cdot \nabla \Delta \psi_{\epsilon}=0,\text{in $\Omega$},\\&
        \psi_{\epsilon}=0,\text{in $\partial \Omega$}.
    \end{aligned}
    \end{equation}
     For all positive integers $n$, $\lim _{\epsilon\rightarrow 0}\|\psi_{\epsilon}-\psi_0\|_{C^{n}}=0$. In addition, $\lim_{\epsilon\rightarrow 0}\|\frac{\nabla\Delta\psi_{\epsilon}}{\nabla \psi_{\epsilon}}-F^{'}(\psi_0)\|_{L^{\infty}}=0$.
    Moreover, there exists a neighborhood of $\psi_{\epsilon}$ in $C^2$ where all steady states fail to solve a semilinear elliptic equation.
\end{proposition}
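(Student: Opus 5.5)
We follow the quasilinearization scheme of Section \ref{sec:Quasilinearization}. We look for $\psi_\epsilon=\psi_0+\psi_1$ with $\psi_1|_{\partial\Omega}=0$, $\omega_1=\Delta\psi_1$, and $M:=F'(\psi_0)$. We seek $\omega_1$ in the form
\[\omega_1-M\psi_1=\epsilon G+h,\]
where $G$ is a smooth function that \emph{breaks} the semilinear structure, and $h$ solves \eqref{goodscheme}. Concretely, we take $\psi_*$ a regular value of $\psi_0$ whose level set has at least two connected components; such a value exists because $\psi_0$ is Morse with more than one critical point on a simply connected domain, so there is a saddle or two extrema. We then choose $G$ smooth, constant on connected components of level sets of $\psi_0$, and supported near one component $\gamma_1$ of $\{\psi_0=\psi_*\}$ only. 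We also arrange $G$ to vanish near critical values of $\psi_0$.

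Given the pair $(G,h)$, the stream function is recovered by inverting $\Delta-M$ on $H^2_0$. This inversion is bounded in all H\"older norms by the invertibility hypothesis together with Schauder theory. Thus $\psi_1=(\Delta-M)^{-1}(\epsilon G+h)$, and the map $(G,h)\mapsto\psi_1$ gains two derivatives. The remaining equation is
\[\nabla^\perp(\psi_0+\psi_1)\cdot\nabla h=-\nabla^\perp\psi_1\cdot\nabla(M\psi_1)-\epsilon\nabla^\perp\psi_1\cdot\nabla G.\]
Here $\nabla^\perp\psi_0\cdot\nabla G=0$ has been used, and the right side is quadratic or $O(\epsilon\|\psi_1\|)$ with no derivative loss, since it involves $\psi_1$, which is two derivatives smoother than $h$. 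We solve this by a fixed point argument: $h=T_{\psi_0+\psi_1}^{-1}(\text{RHS})$. Here $T_H^{-1}$ is the right inverse of $\nabla^\perp H\cdot\nabla$ normalized to be mean-free on each connected component of the level sets of $H$, as in Section \ref{sec:Solving  Hamiltonian}.

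Solvability requires the right side to integrate to zero against $dl/|\nabla H|$ on every level-set component of $H=\psi_0+\psi_1$. This follows from the identity at the end of Section \ref{sec:Quasilinearization}: the right side equals $\nabla^\perp H\cdot\nabla(\cdots)$ of a known function, so it is a perturbed Hamiltonian derivative. The flatness assumption on $F^{(m)}$ near critical values makes $M$ locally constant there to high order. This keeps the expressions regular near saddles and extrema, where action-angle coordinates degenerate.

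The main obstacle is the uniform H\"older boundedness of $T_H^{-1}$ for $H$ in a small $C^n$ neighborhood of $\psi_0$, including near hyperbolic critical points, where the period blows up logarithmically. My plan is to use action-angle coordinates (appendix) away from critical values, and near critical points to use the vanishing of the data (the RHS vanishes to high order there because $G$ and $M$ are locally trivial). This lets the period singularity be absorbed at the cost of finitely many derivatives. This explains the loss from $k$ to $\lfloor k/2\rfloor$, and we accept such a loss per iteration by running Schauder's fixed point theorem in a ball of $C^{n}$ compactly embedded in $C^{n-1}$, rather than a contraction. Smoothness of the solution then follows by bootstrapping in $n$, since $F$ and $G$ are $C^\infty$.

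With $\psi_\epsilon$ constructed, we check the remaining claims. Convergence in every $C^n$ holds because $\psi_1=O(\epsilon)$. For the ratio, $\nabla\Delta\psi_\epsilon/\nabla\psi_\epsilon\to M$ in $L^\infty$ since $\nabla(\omega_1-M\psi_1)=O(\epsilon)$ along with its derivative data, and $\nabla\psi_\epsilon/\nabla\psi_0$ is regular by the Morse property. To break the semilinear structure, we compare two points with the same stream value $\psi_*$, one on $\gamma_1$ and one on another component $\gamma_2$. At leading order the vorticities differ by $\epsilon(G|_{\gamma_1}-G|_{\gamma_2})+o(\epsilon)\neq0$. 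Since $\psi_\epsilon$ is $C^2$-close to $\psi_0$, its level set at $\psi_*$ still has two components near $\gamma_1,\gamma_2$, so no single-valued $\tilde F$ exists. The jump of size $\sim\epsilon$ persists for all steady states in a sufficiently small $C^2$ neighborhood, which gives the open set $\mathcal{O}_\epsilon$.
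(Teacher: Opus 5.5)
There is a genuine gap in how you handle the right-hand side near the critical level sets. The rest of your architecture matches the paper: the breaking term $\epsilon G$, recovery through $(\Delta-M)^{-1}$, the transport form \eqref{goodscheme}, a Schauder fixed point, and the two-component comparison at the end.

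Approximate $m$-flatness with $m=\lfloor k/2\rfloor\ge 3$ only says that $F$ is a polynomial of degree at most $m-1$ near each critical value. It does not make $M=F'(\psi_0)$ locally constant. Consequently the term
\[
-\nabla^\perp\psi_1\cdot\nabla(M\psi_1)=-\psi_1F''(\psi_0)\,\nabla^\perp\psi_1\cdot\nabla\psi_0
\]
does not vanish near the critical points. More importantly, it is generically nonzero along the whole critical level set through a saddle, not just near the saddle point.

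Feeding this term to the mean-free inverse does not cost ``finitely many derivatives''; it destroys Lipschitz regularity. Since the right side equals $-\nabla^\perp H\cdot\nabla\bigl(F(\psi_0)+M\psi_1+\epsilon G\bigr)$, a fixed point with the mean-free normalization would force $\omega_\epsilon$ to be the time average, over level components of $H$, of $F(\psi_0)+M\psi_1+\epsilon G$. Here $F(\psi_0)+M\psi_1=F(H)-\tfrac12F''(H)\psi_1^2+O(\psi_1^3)$. Near a saddle level $c$ (saddle point $p$) the period grows like $\log(1/|H-c|)$. The time average of a smooth $f$ therefore behaves like $f(p)+A/\log(1/|H-c|)$, with $A\neq0$ barring a nongeneric cancellation along the separatrix, and this is not Lipschitz in $H$. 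Then $\nabla\Delta\psi_\epsilon/\nabla\psi_\epsilon$ is unbounded. The smoothness bootstrap, the $C^n$ convergence, and the $L^\infty$ convergence of the ratio all fail. Your ratio argument itself needs $C^2$ control of $\omega_1-M\psi_1$ up to the critical levels, which is exactly what is lost.

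The missing idea is how flatness is actually used. Iterate $\nabla^\perp H\cdot\nabla H=0$, i.e.\ $\nabla^\perp H\cdot\nabla\psi_0=-\nabla^\perp H\cdot\nabla\psi_1$, to rewrite the quadratic term as
\[
\nabla^\perp H\cdot\nabla\Bigl(\sum_{j=2}^{m-1}F^{(j)}(\psi_0)\frac{\psi_1^{j}}{j!}\Bigr)+F^{(m)}(\psi_0)\,\nabla^\perp H\cdot\nabla\Bigl(\frac{\psi_1^{m}}{m!}\Bigr),
\]
as in \eqref{eqn:g-qualitative-decomposition2}. The first piece is inverted by the explicit Taylor sum itself. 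That is, the free function of $H$ is chosen so that $\omega_\epsilon$ matches the Taylor expansion of $F(\psi_\epsilon)$ near the critical levels, not by the mean-free choice. Only the $F^{(m)}$ term and the $G$ term are passed to $\mathcal{L}_H^{-1}$, and both are supported on level values bounded away from the critical values. This is exactly the hypothesis of Proposition \ref{lemma:solve transport}, so the period singularity never enters.

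The same structure makes $\nabla\Delta\psi_\epsilon$ an explicit multiple of $\nabla\psi_\epsilon$ near the critical levels, which is how the $L^\infty$ convergence of the ratio is obtained. Also, the passage from $k$ to $\lfloor k/2\rfloor$ comes from the regularity of $F$ at Morse critical points and from the flat approximation, not from the period singularity. Under the present hypotheses everything is $C^\infty$.
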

\subsubsection{Construction of the first-order perturbation} 
We now construct the first-order perturbation, which we call $\psi_1.$

  Denote the critical points of $\psi_0$ by $\{x_1,\cdots,x_m\}$ with $m\geq 2$, as $\psi_0$ is a Morse function,  by Morse theory, $\psi_0$ has at least one saddle point. Without loss of generality, we take $x_1$ to be a saddle point. We now construct a perturbation in a neighborhood of $x_1$. Here is an illustrative figure for our construction. \begin{figure}[h!]
 \caption{An illustrative figure for the construction}
    \centering
    \includegraphics[width=0.5\linewidth]{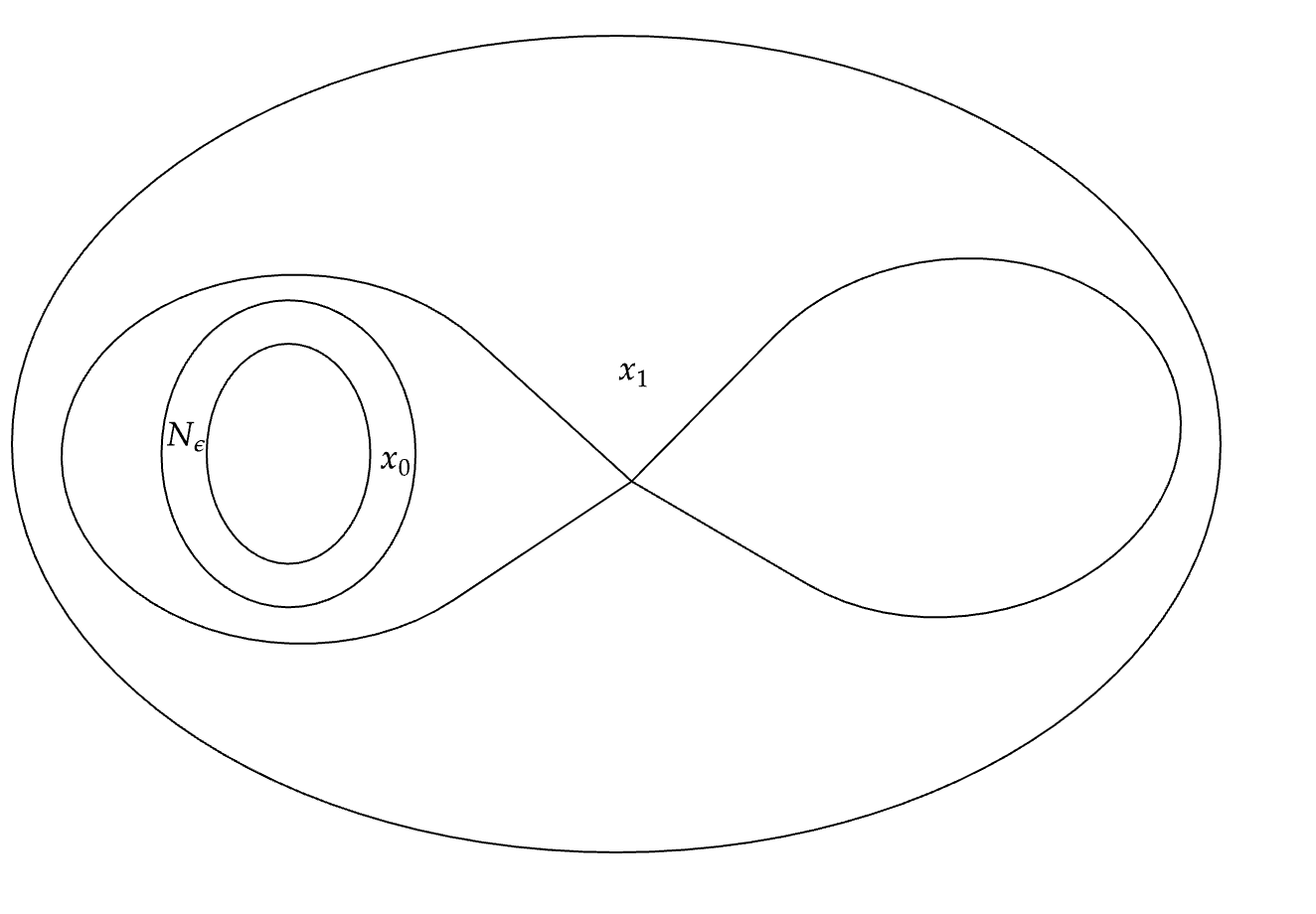}
\end{figure}
  
 In the above figure, $x_0$  is a point close to $x_1$, so that $\psi_0(x_0)$ is a regular value for $\psi_0$ while the set $\{\psi_0=\psi_0(x_0)\}$ has multiple components. Now we fix a $\epsilon_0>0$ such that there is a connected component of $\{\psi_0(x_0)-\epsilon_0\leq \psi_0\leq  \psi_0(x_0)+\epsilon_0\}$  containing $x_0$ but no critical points $x_1,\cdots,x_m$ (in particular, the set only contains one connected component of $\{\psi_0=\psi_0(x_0)\}$). We denote the set by $\mathcal{N}_{\epsilon_0}$. Now we fix a smooth function $G\not\equiv 0$ with $G(t) =0$ for all $|t-\psi_0(x_0)|\geq \epsilon_0$.

Correspondingly, we can define a smooth function $\eta$:\begin{align*}
    \eta(x):=&\left\{\begin{array}{cc}
    0,\quad & x\in \mathcal{N}_{\epsilon_0}^{c}\cap \Omega;  \\
        G(\psi_0), &\quad  x\in \mathcal{N}_{\epsilon_0}.\\
    \end{array}\right.
\end{align*} 
$\eta$ is a smooth function in $\Omega$. We now define:
\begin{equation}\label{eqn:defpsi1}
\psi_1=(\Delta-F'(\psi_0))^{-1}\eta    
\end{equation}
 and we have the following proposition concerning $\psi_1$. 
 \begin{proposition}\label{Prop:psi1regularity}
     $\psi_1$ is a smooth function satisfying \begin{equation}\label{eqn:appoximate psi1}
\begin{aligned}
     \nabla^{\perp} \psi_{0}\cdot \nabla(\Delta\psi_{1}-F^{'}(\psi_0)\psi_{1})+\nabla^{\perp} \psi_{1}\cdot\nabla(\Delta\psi_{0}-F^{'}(\psi_0)\psi_{0})=-F^{''}(\psi_0)\psi_0\nabla^{\perp}\psi_1\cdot \nabla \psi_0.
\end{aligned}
\end{equation}
Moreover, for all positive integers $m$ \begin{equation}
    |\psi_1|_{C^{m}}\lesssim |\psi_0|_{C^{m+2}}.\footnote{In this paper, we say $A\lesssim B$ if there is a universal constant C such that $|A|\leq C|B|$, and $A\lesssim_{\delta} B$, if there is a constant C depending on $\delta$ such that $|A|\leq C|B|$ .}
\end{equation}
 \end{proposition}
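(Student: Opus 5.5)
The plan has two parts: the identity \eqref{eqn:appoximate psi1} is algebra, and the estimate follows from elliptic regularity. For the identity, I would start from the definition \eqref{eqn:defpsi1}, which gives $\Delta\psi_1-F'(\psi_0)\psi_1=\eta$. Inside $\mathcal N_{\epsilon_0}$ we have $\eta=G(\psi_0)$, and outside $\eta=0$. Near $\partial\mathcal N_{\epsilon_0}\cap\Omega$ the argument of $G$ satisfies $|\psi_0-\psi_0(x_0)|$ close to $\epsilon_0$, so $G(\psi_0)$ vanishes to infinite order there. Hence $\eta$ is smooth and locally a function of $\psi_0$ (either $G(\psi_0)$ or $0$), which gives $\nabla^\perp\psi_0\cdot\nabla\eta=0$ pointwise. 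This disposes of the first term on the left of \eqref{eqn:appoximate psi1}.

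For the second term, I would use $\Delta\psi_0=F(\psi_0)$ to write $\Delta\psi_0-F'(\psi_0)\psi_0=F(\psi_0)-F'(\psi_0)\psi_0$. Its gradient is
\[F'(\psi_0)\nabla\psi_0-F''(\psi_0)\psi_0\nabla\psi_0-F'(\psi_0)\nabla\psi_0=-F''(\psi_0)\psi_0\nabla\psi_0.\]
Dotting with $\nabla^\perp\psi_1$ gives exactly the right-hand side of \eqref{eqn:appoximate psi1}. This only needs $F\in C^2$, which holds since $F$ is smooth under the approximate flatness hypothesis.

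For the regularity estimate, I would use the invertibility of $\Delta-F'(\psi_0):H^2_0\to L^2$, assumed in Proposition \ref{thm:main}, to get $\|\psi_1\|_{H^2}\lesssim\|\eta\|_{L^2}$. I would then bootstrap with Schauder or $H^s$ elliptic estimates on the smooth domain $\Omega$, treating $F'(\psi_0)\psi_1$ as a source term, to obtain
\[\|\psi_1\|_{C^{m}}\lesssim\|\eta\|_{C^{m-2+\alpha}}+\|F'(\psi_0)\psi_1\|_{C^{m-2+\alpha}}.\]
Since $\eta=G(\psi_0)$ with $G$ fixed, the chain rule bounds $\eta$ in $C^{j}$ by $\psi_0$ in $C^j$, with constants depending on $G$ and lower norms of $\psi_0$. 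Likewise $F'(\psi_0)=(\Delta\psi_0)'/\psi_0'$ along level sets is controlled by $\psi_0$ in $C^{j+2}$. Induction on $m$ then gives $|\psi_1|_{C^m}\lesssim|\psi_0|_{C^{m+2}}$, with implicit constants allowed to depend on the fixed data $G$, $\Omega$ and $\epsilon_0$. Smoothness of $\psi_1$ follows because $\eta$, $F'(\psi_0)$ and $\partial\Omega$ are all smooth.

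The main obstacle is the precise form of the estimate: the constants are not universal in a strict sense, and the loss of derivatives must be tracked correctly. Measuring Hölder norms at non-integer order before passing to integer $C^m$ absorbs the Schauder gap, and lower-order terms are handled by the $H^2$ bound plus Sobolev embedding.
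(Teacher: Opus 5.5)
Your proposal is correct and is essentially the paper's argument, which the paper only sketches: the identity is the $O(\epsilon)$ term of the expansion. It reduces to your observation $\nabla^\perp\psi_0\cdot\nabla\eta=0$ plus the chain-rule computation $\nabla\bigl(F(\psi_0)-F'(\psi_0)\psi_0\bigr)=-F''(\psi_0)\psi_0\nabla\psi_0$, and the bounds are standard elliptic regularity. One remark should be dropped: controlling $F'(\psi_0)$ through the ratio $\nabla\Delta\psi_0/\nabla\psi_0$ loses more than two derivatives near the critical points of $\psi_0$ (compare the remark after Proposition~\ref{thm:main}); it is unnecessary, because $F$ is a fixed smooth function in this subsection, so the chain rule bounds $F'(\psi_0)$ in every $C^j$ directly.
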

The equation \eqref{eqn:appoximate psi1} is equivalent to the $O(\epsilon)$ term in the expansion of $\nabla^{\perp}\psi_{\epsilon}\cdot \nabla \Delta \psi_{\epsilon}=0$.  The regularity estimates concerning $\psi_1$ follow from standard arguments in elliptic theory.

\subsubsection{Construction of the remainder}
Given the first-order perturbation $\psi_1$ constructed in the above section, we now state a proposition giving the remainder $\psi_2.$
\begin{proposition}\label{prop:C1 approximation}
     There exists a constant $C>0$ depending on $\psi_0,F,$ and $\eta$ such that for all $\epsilon>0$ sufficiently small, there is a $C^1$ function $\psi_2$ solving  
     \begin{equation}\label{eqn:steady full eqn}
         \begin{aligned}
            \nabla^{\perp} (\psi_0+\epsilon\psi_1+\epsilon^2\psi_2) \cdot \nabla \Delta (\psi_0+\epsilon\psi_1+\epsilon^2\psi_2)=0
         \end{aligned}
     \end{equation} 
with $\|\psi_2\|_{C^1(\Omega)}\leq C$.
\end{proposition}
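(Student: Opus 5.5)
We will follow the quasilinearization scheme of Section \ref{sec:Quasilinearization}. Write $\psi=\psi_0+\epsilon\psi_1+\epsilon^2\psi_2$, $\omega_0=\Delta\psi_0=F(\psi_0)$, and take $M=F'(\psi_0)$, so that $\nabla^\perp\psi_0\cdot\nabla M=0$. Set $\phi=\epsilon\psi_1+\epsilon^2\psi_2$ and expand the equation. Using $\nabla^\perp\psi_0\cdot\nabla\omega_0=0$ and $\nabla\omega_0=M\nabla\psi_0$, \eqref{eqn:steady full eqn} becomes
\[
\nabla^\perp(\psi_0+\phi)\cdot\nabla(\Delta\phi-M\phi)=-\nabla^\perp\phi\cdot\nabla(M\phi).
\]
The $O(\epsilon)$ part is $\nabla^\perp\psi_0\cdot\nabla\eta=0$, which holds because $\eta=G(\psi_0)$ on $\mathcal{N}_{\epsilon_0}$ and vanishes elsewhere; this is the content of \eqref{eqn:appoximate psi1}. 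Since $\Delta\psi_1-M\psi_1=\eta$, we divide by $\epsilon^2$ and set $w=\Delta\psi_2-M\psi_2$. The unknown then satisfies
\[
\nabla^\perp(\psi_0+\epsilon\psi_1+\epsilon^2\psi_2)\cdot\nabla w=-\epsilon^{-1}\nabla^\perp\psi_2\cdot\nabla\eta\;-\;\nabla^\perp(\psi_1+\epsilon\psi_2)\cdot\nabla\big(M(\psi_1+\epsilon\psi_2)\big),
\]
and the right-hand side is $O(1)$ in $\epsilon$. The apparent singular term is harmless because $\nabla^\perp\psi_2\cdot\nabla\eta=G'(\psi_0)\nabla^\perp\psi_2\cdot\nabla\psi_0$. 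We move it to the left-hand side: writing $H=\psi_0+\epsilon\psi_1+\epsilon^2\psi_2$ and noting that $\nabla^\perp H\cdot\nabla\eta=\epsilon\nabla^\perp\psi_1\cdot\nabla\eta+\epsilon^2\nabla^\perp\psi_2\cdot\nabla\eta$, we replace $w$ by $w+\epsilon^{-1}\cdot$(a suitable $\eta$-correction). Equivalently, we work with $\omega_1-M\psi_1$ as in \eqref{goodscheme}, with the full perturbation. This is exactly the form $(u_0+u_1)\cdot\nabla f=g$, with $g$ of size $O(1)$, depending on $\psi_2$ through at most first derivatives multiplied by smooth fixed functions.

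The fixed-point map is as follows. Given $\psi_2$ in a ball $B=\{\|\psi_2\|_{C^{1,\alpha}}\le C\}$, define $H$ as above and compute $g(\psi_2)$. We solve $\nabla^\perp H\cdot\nabla f=g$ using the right inverse $T_H$ described in Section \ref{sec:Solving  Hamiltonian}, which is mean-free on each connected component of the level sets of $H$. We then set $\Phi(\psi_2)=(\Delta-M)^{-1}f$, which is well defined by the invertibility hypothesis. Elliptic regularity upgrades $f\in C^{\alpha}$ to $\Phi(\psi_2)\in C^{2,\alpha}$, which is compactly embedded in $C^{1,\alpha}$ and hence lands in $B$. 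Schauder's fixed point theorem then gives a fixed point provided three things hold: $T_H$ maps $C^{\alpha}$-type data into $C^{\alpha}$ with bounds uniform for $H$ near $\psi_0$; the dependence on $\psi_2$ is continuous; and $g$ is in the range, i.e.\ has zero integral against the invariant measure on each closed streamline of $H$. For the range condition we use the identity at the end of Section \ref{sec:Quasilinearization}. The right-hand side equals $-\nabla^\perp H\cdot\nabla\Delta H$ plus terms of the form $\nabla^\perp H\cdot\nabla(\cdot)$. Because $\nabla^\perp H\cdot\nabla\Delta H$ is $\nabla^\perp H$ applied to a function, the circulation of $g/|\nabla H|$ over each closed level curve vanishes automatically, so $g$ lies in the range.

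The main obstacle is the uniform Hölder boundedness of the right inverse $T_H$ near the critical points of $H$, in particular near the saddle $x_1$ and its separatrices. There the period of the orbits blows up logarithmically, and the component-wise mean-free normalization jumps across the separatrix. We would first use that $\eta$ and hence the forcing terms involving $G'(\psi_0)$ are supported away from critical levels, in $\mathcal{N}_{\epsilon_0}$. The approximate flatness of $F$ then makes $M$ locally constant near critical values, so that near the critical points $g$ reduces to terms that can be handled by action-angle coordinates (appendix) and Morse-lemma normal forms of $H$. In these coordinates $f$ is obtained by integrating along orbits. We would estimate the loss near saddles by accepting a loss of Hölder exponent, or a logarithmic weight, that the two derivatives gained from $(\Delta-M)^{-1}$ still absorb in order to close in $C^1$. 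This is why the statement only claims a $C^1$ bound on $\psi_2$.
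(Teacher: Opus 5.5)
Your skeleton matches the paper: reduce to $\nabla^\perp H\cdot\nabla\big((\Delta-F'(\psi_0))\psi_2\big)=g$, get the range condition from the steady-state residual, and apply Schauder to $(\Delta-F'(\psi_0))^{-1}$ of a transport preimage. But the step you yourself flag as the main obstacle is left open, and your proposed way around it does not work. (A side slip: after dividing by $\epsilon^2$ the $\eta$-term is $-(\nabla^\perp\psi_1+\epsilon\nabla^\perp\psi_2)\cdot\nabla\eta=-\mathcal R$. There is no $\epsilon^{-1}$ term and no correction is needed.)

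First, approximate $\lfloor k/2\rfloor$-flatness does not make $M=F'(\psi_0)$ locally constant near critical values. It only gives $F^{(m)}=0$ there with $m=\lfloor k/2\rfloor\ge 3$, so $F'$ is a polynomial of degree up to $m-2$. With $h=\psi_1+\epsilon\psi_2$, the term $-\nabla^\perp h\cdot\nabla(Mh)=F''(\psi_0)\,\nabla^\perp H\cdot\nabla(h^2/2)$ in general does not vanish near the saddle and its separatrices.

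Second, you are then left needing a uniform bound for the mean-free right inverse on data that do not vanish near critical levels. The only estimate available, part (1) of Proposition~\ref{lemma:solve transport}, costs $\|T\|_{L^\infty}$ over the streamlines meeting $\supp g$. That is infinite, since the period blows up logarithmically. Proposition~\ref{prop:regularity period} only covers regions where $|\nabla H|$ is bounded below on entire level sets. The uniform $C^\alpha\to C^\alpha$ bound you want is in fact false. If $g=\nabla^\perp H\cdot\nabla\Psi$, the mean-free inverse is $\Psi$ minus its time average along the streamline, and these averages are continuous across a separatrix only with modulus of order $1/|\log d|$. ``Accepting a loss of H\"older exponent or a logarithmic weight'' does not supply an estimate.

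The missing idea is that you never need to invert the transport operator near the critical levels, because there $g$ is already an exact transport derivative of an explicit bounded function. Using $\nabla^\perp H\cdot\nabla\psi_0=-\epsilon\,\nabla^\perp H\cdot\nabla h$ one gets
\[
F^{(j)}(\psi_0)\,\nabla^\perp H\cdot\nabla\frac{h^j}{j!}
=\nabla^\perp H\cdot\nabla\Big(F^{(j)}(\psi_0)\frac{h^j}{j!}\Big)
+\epsilon\,F^{(j+1)}(\psi_0)\,\nabla^\perp H\cdot\nabla\frac{h^{j+1}}{(j+1)!},
\]
and iterating from $j=2$ to $j=m-1$ gives \eqref{eqn:g-qualitative-decomposition2}.

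For the exact part, take $\sum_{j=2}^{m-1}\epsilon^{j-2}F^{(j)}(\psi_0)h^j/j!$ itself as the preimage, with no integration along orbits. The leftover $\epsilon^{m-2}F^{(m)}(\psi_0)\nabla^\perp H\cdot\nabla(h^m/m!)-\mathcal R$ lies in the range, as a difference of range elements. By flatness and $\supp\eta\subset\mathcal N_{\epsilon_0}$ it is supported away from the critical levels, where $T$ is bounded. So $\mathcal H_\epsilon=\mathcal L_{\psi_\epsilon}^{-1}[\cdots]$ is controlled in $L^\infty$ by part (1). (The same structure can be read off from the Taylor remainder: $g+\mathcal R=\nabla^\perp H\cdot\nabla\Psi$ with $\Psi=\epsilon^{-2}[F(H)-F(\psi_0)-\epsilon F'(\psi_0)h]$ and $|\Psi|\le\frac12\|F''\|_{L^\infty}h^2$.)

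The map $\psi_2\mapsto(\Delta-F'(\psi_0))^{-1}\big[\sum_{j=2}^{m-1}\epsilon^{j-2}F^{(j)}(\psi_0)h^j/j!+\mathcal H_\epsilon\big]$ then sends a $C^1$-ball into a $C^{1,1/2}$-ball by $W^{2,p}$ estimates, and Schauder closes. Only $L^\infty$ bounds on the preimage are needed, never H\"older bounds on the right inverse. The $C^1$ level of the statement comes from this, not from a loss at saddles.
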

Let $\psi_{\epsilon}=\psi_0+\epsilon \psi_1+\epsilon^2 \psi_2$, then
\eqref{eqn:steady full eqn} is equivalent to 
\begin{equation}\label{eqn:second order approxmimation}
    \begin{aligned}
       &\nabla^{\perp}\psi_{\epsilon}\cdot \nabla (\Delta \psi_2-F^{'}(\psi_0)\psi_2)\\&
        =-F^{''}(\psi_0)[\epsilon \psi_2 \nabla^{\perp}\psi_1 \cdot \nabla \psi_0+\epsilon^2 \psi_2 \nabla^{\perp}\psi_2 \cdot \nabla \psi_0+\psi_1\nabla^{\perp}\psi_1 \cdot \nabla \psi_0+\epsilon \psi_1 \nabla^{\perp}\psi_2 \cdot \nabla \psi_0 ]\\&\quad -(\nabla^{\perp}\psi_{1}+\epsilon \nabla^{\perp}\psi_2)\cdot \nabla [\Delta \psi_{1}-F^{'}(\psi_0)\psi_1]
        :=g.
    \end{aligned}
\end{equation}
 If we define:
\[S=\nabla^\perp\psi_\epsilon\cdot\nabla,\qquad A=\Delta-F'(\psi_0),\] then \eqref{eqn:second order approxmimation} can be written schematically as:
\[SA\psi_2=g.\] 
We have the following qualitative statement concerning $g$.
\begin{proposition}\label{prop:g qualitative}
    $g$ lies in the range of the operator $S$. In addition, $g$ admits the following decomposition.
\begin{equation}\label{eqn:g-qualitative-decomposition1}
    g =F''(\psi_{0})\,
\nabla^{\perp}\psi_{\epsilon}\cdot
\nabla\left(\dfrac{(\psi_{1}+\epsilon\psi_{2})^{2}}{2}\right)
-\mathcal{R},
\end{equation}
where \[\mathcal{R}
:=
(\nabla^{\perp}\psi_{1}
+\epsilon\nabla^{\perp}\psi_{2})
\cdot
\nabla\left[
\Delta\psi_{1}
-F'(\psi_{0})\psi_{1}
\right].
\]
Moreover, for $k\geq 6$, denote $m=\lfloor\frac{k}{2}\rfloor$, we have 
    \begin{equation}\label{eqn:g-qualitative-decomposition2}
\begin{aligned}
g&=\nabla^{\perp}\psi_{\epsilon}\cdot
\nabla\left(
\displaystyle\sum_{j=2}^{m-1}
\epsilon^{j-2}F^{(j)}(\psi_{0})
\dfrac{(\psi_{1}+\epsilon\psi_{2})^{j}}{j!}
\right) \\&
\qquad
+\epsilon^{m-2}F^{(m)}(\psi_{0})\,
\nabla^{\perp}\psi_{\epsilon}\cdot
\nabla\left(
\dfrac{(\psi_{1}+\epsilon\psi_{2})^{m}}{m!}
\right)
-\mathcal{R},
\end{aligned}
\end{equation}
\end{proposition}
\begin{proof}
    Due to \eqref{eqn:appoximate psi1}, we have \begin{equation}
    \begin{aligned}
        g&=\frac{1}{\epsilon^2}[\nabla^{\perp}\psi_{\epsilon}\cdot\nabla (\Delta\psi_{\epsilon}-F^{'}(\psi_0)\psi_{\epsilon})-\nabla^{\perp} \psi_{0}\cdot\nabla(\Delta\psi_{0}-F^{'}(\psi_0)\psi_{0})\\&\quad -\epsilon\nabla^{\perp} \psi_{0}\cdot \nabla(\Delta\psi_{1}-F^{'}(\psi_0)\psi_{1})-\epsilon\nabla^{\perp} \psi_{1}\cdot\nabla(\Delta\psi_{0}-F^{'}(\psi_0)\psi_{0})+\epsilon F^{''}(\psi_0)\psi_0\nabla^{\perp}\psi_1\cdot \nabla \psi_0\\&\quad -\epsilon^2 \nabla^{\perp} \psi_{\epsilon}\cdot (\Delta\psi_2-F^{'}(\psi_0)\psi_2) ]\\&=\frac{1}{\epsilon^2}\nabla^{\perp}\psi_{\epsilon}\cdot\nabla (\Delta\psi_{0}-F^{'}(\psi_0)\psi_{0}+\epsilon \Delta\psi_{1}-\epsilon F^{'}(\psi_0)\psi_{1}),
    \end{aligned}
\end{equation}
$g$ is in the range of $S$ (which is a qualitative statement).
Moreover, note that \begin{equation}
    g=-F^{''}(\psi_0)\nabla^{\perp}\psi_{\epsilon}\cdot \nabla(\psi_0)(\psi_2+\frac{\psi_1}{\epsilon})-(\nabla^{\perp}\psi_{1}+\epsilon \nabla^{\perp}\psi_2)\cdot \nabla [\Delta \psi_{1}-F^{'}(\psi_0)\psi_1],
\end{equation}  using the identity $\nabla^{\perp} \psi_{\epsilon}\cdot \nabla \psi_{\epsilon}=0$, we obtain \begin{equation}
\begin{aligned}
      g=&F^{''}(\psi_0)\nabla^{\perp}\psi_{\epsilon}\cdot \nabla(\epsilon \psi_1+\epsilon^2\psi_2)(\psi_2+\frac{\psi_1}{\epsilon})-(\nabla^{\perp}\psi_{1}+\epsilon \nabla^{\perp}\psi_2)\cdot \nabla [\Delta \psi_{1}-F^{'}(\psi_0)\psi_1]\\&=F''(\psi_{0})\,
\nabla^{\perp}\psi_{\epsilon}\cdot
\nabla\left(\dfrac{(\psi_{1}+\epsilon\psi_{2})^{2}}{2}\right)
-\mathcal{R},
\end{aligned}
\end{equation}
which is \eqref{eqn:g-qualitative-decomposition1}.
Similarly, iteratively using the identity $\nabla^{\perp} \psi_{\epsilon}\cdot \nabla \psi_{\epsilon}=0$, we have \eqref{eqn:g-qualitative-decomposition2}. 
\end{proof} Now it suffices to prove good quantitative bounds for a right inverse of $S.$ We will discuss this issue and prove Proposition \ref{prop:C1 approximation} in the next section.

\subsubsection{A right inverse of the Hamiltonian transport operator and proof of Proposition \ref{prop:C1 approximation}}

Let $\Omega$ be a compact domain in $\mathbb{R}^2$ and let $H\in C^{1}(\Omega)$ such that $H$ is constant on connected components of $\partial \Omega$. Define the Hamiltonian vector field
\[
X_H:=\nabla^\perp H:=(-\partial_2 H,\partial_1 H),
\]
and the first-order transport operator
\[
\mathcal L_H f := X_H\cdot \nabla f .
\]
We are interested in solving
\[
\mathcal L_H f = g,
\]
where $g$ lies in the range of $\mathcal L_H$. In addition, we assume
$\supp(g)$ has positive distance from $H^{-1}\{\nabla H(x)=0\}$, which is the critical level set of $H$.

\medskip
Fix $x_0\in\Omega$ such that
$H(x_0)$ is a regular value for $H$. 
Let $\Sigma_{x_0}$ be the connected component of the level set of $H$ containing $x_0$ and $\Gamma(x_0,t)$ be a flow generated by $X_H$:
\begin{equation}\label{eq:flow}
\begin{aligned}
\frac{\partial }{\partial t}\Gamma(x_0,t) &= X_H(\Gamma(x_0,t)),\\
\Gamma(x_{0},0) &= x_0.
\end{aligned}
\end{equation}
$\Gamma(x_0,\cdot)$ gives a periodic parametrization of $\Sigma_{x_0}$ and  we denote its (minimal) period by
\[
T(x_0)>0.
\]
We normalize the parametrization by defining, for $t\in[0,1]$,
\begin{equation}\label{eqn:rescaled H flow}
    \widetilde{\Gamma}(x_{0},t):=\Gamma(x_0,T(x_0)\,t).
\end{equation}
For each $y\in \Sigma_{x_0}$, there exists a unique $t_y\in[0,1)$ such that
\[
\widetilde{\Gamma}(x_0,t_y)=y.
\]
Note that $g$ lying in the range of $\mathcal{L}_H$ implies that
\[
\int_{0}^{1} g\big(\widetilde{\Gamma}(x_0,t\big)\,dt = 0.
\]
We now define the primitive of $g$ in $\Sigma_{x_0}$ with zero mean.
\begin{definition}\label{def:Mx0}
For $y\in\Sigma_{x_0}$, define
\begin{equation}\label{eqn:defM}
\mathcal M_{x_0}g(y)
:=\int_{0}^{t_{y}} g\big(\widetilde{\Gamma}(x_{0},s)\big)\,ds
-\int_{0}^{1}\left(\int_{0}^{t} g\big(\widetilde{\Gamma}(x_{0},s)\big)\,ds\right)dt .
\end{equation}
\end{definition}

It is easy to see that the quantity $\mathcal M_{x_0}g(y)$ is independent of the choice of base point $x_0$. We now define $\mathcal L_H^{-1}$  below.

\begin{definition}\label{def:LHinv}
Assume $g\in \mathrm{Ran}(\mathcal L_H)$ and $\supp(g)$ has positive distance from $H^{-1}\{\nabla H(x)=0\}$. We define
\begin{equation}\label{eqn:defH-1}
\mathcal L_H^{-1}g(x):=
\begin{cases}
T(x)\mathcal M_{x}g(x), & \text{if $x\notin H^{-1}\{\nabla H(x)=0\}$,}\\[3pt]
0, & \text{if $x\in H^{-1}\{\nabla H(x)=0\}$.}
\end{cases}
\end{equation}
\end{definition}

We now verify $\mathcal L_H^{-1}$ is a right inverse of $\mathcal L_H$.
\begin{proposition}\label{lemma:solve transport}
Let $\mathcal{L}_{H}:=\nabla^\perp H\cdot \nabla$, and $H$ be a $C^{1}$ function. Assume that
$g$ belongs to the range of $\mathcal{L}_{H}$ and there exists a positive constant $a$ such that 
\[
|\nabla H|>a,\text{for all $z\in H^{-1}(H(\operatorname{supp}g))$},
\]
then the following properties hold for $\mathcal{L}_{H}^{-1}$.

\begin{itemize}
    \item[(1)] There exists a solution $f=\mathcal{L}_{H}^{-1}g$ to
    \[
    \mathcal{L}_{H}f=g
    \]
    such that
    \begin{equation}
        \|\mathcal{L}_{H}^{-1}g\|_{L^\infty}\leq 2|T|_{L^{\infty}(\{H^{-1}\bigl(H(\operatorname{supp}g\bigr))\})} \|g\|_{L^\infty}.
    \end{equation}

    \item[(2)] Assume in addition that $H\in C^{1,1}(\{\nabla H\neq 0\})$ and $g\in C^{0,1}$,
    then
    \[
    \mathcal{L}_{H}^{-1}g\in C^{0,1}.
    \]

    \item[(3)] Let \(k\in \mathbb{N}\) and \(\alpha\in(0,1)\). Assume that
\(H\in C^{k,\alpha}\) and \(g\in C^{k-1,\alpha}\). Then there exists
\(\epsilon_{1}>0\) such that, for every
\(\tilde H\in C^{k,\alpha}\) satisfying
\[
\|\tilde H-H\|_{C^{k,\alpha}}\leq \epsilon_{1},
\]
if
\[
|\nabla \tilde H|>\frac a2
\quad\text{on}\quad
\tilde H^{-1}\bigl(\tilde H(\operatorname{supp}g)\bigr)
\]
and \(g\) belongs to the range of \(L_{\tilde H}\), then
\[
\|L_{\tilde H}^{-1}g\|_{C^{k-1,\alpha}}
\leq C\|g\|_{C^{k-1,\alpha}},
\]
where \(C\) is a constant depending  on \(H,k,a,\alpha\).
\end{itemize}
\end{proposition}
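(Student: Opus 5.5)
The plan is to work in action--angle coordinates on the region $U:=H^{-1}(H(\operatorname{supp} g))$, where $|\nabla H|>a$. Each component of $U$ is foliated by closed regular orbits of $X_H$. On it we use the coordinates $(h,\theta)$ with $h=H(x)$ and $\theta=t_x\in[0,1)$ the normalized time from Definition \ref{def:Mx0}, measured from a smooth transversal section $x_0(h)$; for instance $x_0(h)$ can be taken along an integral curve of $\nabla H/|\nabla H|^2$. In these coordinates the rescaled flow \eqref{eqn:rescaled H flow} becomes $\theta\mapsto\theta+t$, so $\mathcal L_H=T(h)^{-1}\partial_\theta$.

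\textbf{Part (1).} Since $g\in \mathrm{Ran}(\mathcal L_H)$, the orbit averages $\int_0^1 g(\widetilde\Gamma(x_0,s))\,ds$ vanish. Hence $\theta\mapsto\int_0^\theta g$ is $1$-periodic and $\mathcal M_{x_0}g$ is well defined and independent of the base point. Differentiating along the flow, $\frac{d}{dt}\mathcal M g(\Gamma(x,t))=T^{-1}g$, and $T$ is constant on orbits, so $\mathcal L_H(T\mathcal M g)=g$ on $U$. Off $U$, and in particular on the critical level sets, $g$ vanishes on every orbit through the point, so the formula gives $0$. The bound follows from $|\int_0^{t}g|\le\|g\|_\infty$ for both terms in \eqref{eqn:defM}, which yields the factor $2\|T\|_{L^\infty(U)}$.

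\textbf{Part (2).} In the coordinates $(h,\theta)$, $f=T(h)\big(\int_0^\theta g(h,s)\,ds-\int_0^1\!\int_0^t g\big)$. The map $x\mapsto(h,\theta)$ is bi-Lipschitz on $U$ because $H\in C^{1,1}$ and $|\nabla H|>a$. Flow maps of a $C^{0,1}$ vector field are Lipschitz, and $T(h)$ is Lipschitz in $h$ since the orbits are transversally nondegenerate (differentiate $\Gamma(x_0(h),T(h))=x_0(h)$ via the implicit function theorem). It follows that $f$ is Lipschitz on $U$. It vanishes identically on the complement. The remaining point is continuity across $\partial U$. There, either $g$ vanishes on a neighborhood of the orbit, so $f=0$ nearby, or we are at the boundary of the domain of definition. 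In both cases the Lipschitz bounds patch together.

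\textbf{Part (3).} I would track the $C^{k-1,\alpha}$ norms of the same formula quantitatively. The flow $\Gamma$ of $X_{\tilde H}\in C^{k-1,\alpha}$ is $C^{k-1,\alpha}$ in $(x_0,t)$. The period $\tilde T$ and the section $x_0(h)$ are $C^{k-1,\alpha}$ by the implicit function theorem applied to $\Gamma(x_0(h),T)-x_0(h)=0$, using transversality $\partial_t\Gamma=X_{\tilde H}\neq0$ and $|\nabla\tilde H|>a/2$. Composition estimates then give $\|f\|_{C^{k-1,\alpha}}\le C\|g\|_{C^{k-1,\alpha}}$. The constants depend only on the $C^{k,\alpha}$ norm of $\tilde H$, on $a$, and on a lower bound for the injectivity and transversal-section size. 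Because $\|\tilde H-H\|_{C^{k,\alpha}}\le\epsilon_1$, all of these are controlled by $H$, which gives uniformity.

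\textbf{Main obstacle.} The delicate step is getting uniform control of $\tilde T$, of the action--angle chart, and of their derivatives near the edge of $\tilde H^{-1}(\tilde H(\operatorname{supp}g))$. There $|\nabla\tilde H|$ is only bounded below by $a/2$, and the level sets could in principle approach critical ones. My plan is to use the gradient bound to show the orbits stay a fixed distance from critical points for all $\epsilon_1$ small. After that, a compactness and implicit-function argument makes all chart constants uniform.
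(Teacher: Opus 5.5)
Your plan is essentially the paper's proof. Part (1) is read off from the explicit formula, and parts (2)--(3) come from a flow-box chart around each regular orbit together with regularity of the period: the paper uses the straight transversal $x_0+sn$ and the rescaled flow $T\nabla^\perp H$, and takes the period's regularity from Proposition \ref{prop:regularity period}, i.e.\ from $T(h)=\int_{\Gamma_h}|\nabla H|^{-1}\,d\mu$ transported along the flow of $\nabla H/|\nabla H|^{2}$ with Gr\"onwall bounds, rather than from the implicit function theorem.

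The one step to adjust is in (2). There $H$ is only $C^{1,1}$, so the flow is merely Lipschitz in the initial point and $\Gamma(x_0(h),T(h))=x_0(h)$ cannot literally be differentiated. You can either use that Gr\"onwall argument, or run your implicit-function computation for a mollified $H$ (its bound on $T'$ depends only on $a$ and $\|H\|_{C^{1,1}}$) and pass to the limit. Also, your dichotomy at $\partial U$ is inaccurate, since $g$ may be nonzero on orbits just on one side. The patching still works, however, because $g\equiv 0$, hence $f=0$, on the orbit through any point of $\partial U$ in $\Omega$, by continuity from the orbits just outside $U$.
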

   \begin{proof}
     Property (1) follows directly from \eqref{eqn:defM} and \eqref{eqn:defH-1}. 
     For property (2), let $x_0$ be a fixed point in $\Omega$ and $\Sigma_{x_0}$ be the connected component of $\{H=H(x_0)\}$ crossing $x_0$,  it suffices to verify $\mathcal{L}_{H}^{-1} g$ is Lipschitz near $\Sigma_{x_0}$. First, due to Proposition \ref{prop:regularity period}, the periodic map $T$ is Lipschitz near $\Sigma_{x_0}$. Now, choose $n$ to be the direction of $\nabla H$ at $x_0$, we can parametrize a tubular neighborhood of $\Sigma_{x_0}$ by the rescaled Hamiltonian flow:
    \begin{equation}
    \begin{aligned}
         &\frac{dX(s,t)}{dt}=T(X(s,t))\nabla^{\perp} H(X(s,t))\\&
         X(s,0)=x_0+sn.
    \end{aligned}
    \end{equation}
     $T\nabla^{\perp} H(\cdot)$ is Lipschitz, with $T\nabla^{\perp} H(\cdot)\neq 0$ at $x_0$ , then there exists a positive $\epsilon$ such that $X$ is a bi-Lipschitz mapping from $(-\epsilon,\epsilon)\times [0,1]$ to its image, which is a tubular neighborhood near $\Sigma_{x_0}$. We denote $X^{-1}(y)=(r(y),\tau(y))$, then $t_{y}$ near $\Sigma_{x_0}$ is given by $t_{y}:=\tau(y)$, which is a Lipschitz map in $y$. 
      By \eqref{eqn:defM},
     \begin{equation}
\mathcal M_{y}g(y)
:=\int_{0}^{\tau(y)} g\big({\Gamma}(r(y),(T(y)s)\big)\,ds
-\int_{0}^{1}\left(\int_{0}^{t} g\big({\Gamma}(r(y),T(y)s\big)\,ds\right)dt.
\end{equation}
As $\Gamma$ is a Lipschitz map, we have $\mathcal{L}_{H}^{-1}$ is Lipschitz near $\Sigma_{x_0}$.
Analogously, Property (3) follows from the inverse function theorem, \eqref{eqn:defM} and Proposition \ref{prop:regularity period}.
   \end{proof}
   
We now use Proposition~\ref{lemma:solve transport} to prove the existence of steady states in Proposition \ref{prop:C1 approximation}.
\begin{proof}[Proof of Proposition \ref{prop:C1 approximation}]
Let $C_0$ be a positive constant that we will choose later in the proof which depends on $\psi_0$ and $\psi_1$. As 
$\supp F^{(\lfloor \frac{k}{2}\rfloor)}(\psi_0)\ \cup\ \mathcal N_{\epsilon_0}$ is a compact set away from the critical level set of  $\psi_0$,  there exists a positive constant $\epsilon_0$ depending on $C_0$, $\psi_1$ and $\psi_0$ such that for all $0<\epsilon\leq \epsilon_0$ and $\|\psi_2\|_{C^1}\leq C_0$, the operator 
\begin{equation}
\begin{aligned}
    \mathcal{H}_{\epsilon}:=\mathcal{L}_{\psi_{\epsilon}}^{-1}[\epsilon^{\lfloor \frac{k}{2}-2\rfloor} F^{(\lfloor\frac{k}{2}\rfloor)}(\psi_0)\nabla^{\perp} \psi_{\epsilon}\cdot \nabla(\frac{(\psi_1+\epsilon\psi_2)^{\lfloor\frac{k}{2}\rfloor}}{(\lfloor\frac{k}{2}\rfloor )!})-(\nabla^{\perp}\psi_{1}+\epsilon \nabla^{\perp}\psi_2)\cdot \nabla \eta ]
\end{aligned}   
\end{equation}
is well-defined from $C^1$ to $C^1$, where $\mathcal L^{-1}_{\psi_{\epsilon}}$ is the inverse defined in Definition \ref{def:LHinv}.

Based on \eqref{eqn:second order approxmimation} and Proposition \ref{prop:g qualitative}, the fixed point of the map $\mathcal{L}_0$ given below corresponds to a solution to \eqref{eqn:steady full eqn}:  
\begin{equation}\label{eqn:Llong}
\begin{aligned}
\mathcal L_0\psi_2
:=
(\Delta-F'(\psi_0))^{-1}
\left[
\sum_{j=2}^{m-1}
\epsilon^{j-2} F^{(j)}(\psi_0)
\frac{(\psi_1+\epsilon\psi_2)^j}{j!}
+\mathcal H_{\epsilon}
\right],
\qquad
m:=\left\lfloor \frac{k}{2}\right\rfloor.
\end{aligned}
\end{equation}
Note that $\|\epsilon^{\lfloor \frac{k}{2}-2\rfloor} F^{(\lfloor\frac{k}{2}\rfloor)}(\psi_0)\nabla^{\perp} \psi_{\epsilon}\cdot \nabla(\frac{(\psi_1+\epsilon\psi_2)^{\lfloor\frac{k}{2}\rfloor}}{(\lfloor\frac{k}{2}\rfloor )!})-(\nabla^{\perp}\psi_{1}+\epsilon \nabla^{\perp}\psi_2)\cdot \nabla \eta \|_{L^{\infty}}\lesssim_{\psi_0,\psi_1,k}(1+\epsilon\|\psi_2\|_{C^1}+(\epsilon\|\psi_2\|_{C^1})^{\lfloor\frac{k}{2}\rfloor})$.
Now by the explicit form of $\mathcal{H}_{\epsilon}$, part (1) in Proposition~\ref{lemma:solve transport} and elliptic regularity theory,   \begin{equation}\label{eqn:psi2identity1decompose}
    \|(\Delta-F'(\psi_0))^{-1}
\mathcal H_{\epsilon}\|_{C^{1,\frac{1}{2}}}\lesssim_{\psi_0,\psi_1,k} [1+\epsilon\|\psi_2\|_{C^1}+(\epsilon\|\psi_2\|_{C^1})^{\lfloor\frac{k}{2}\rfloor}].
\end{equation} 
In addition, by elliptic regularity, we have for all $k\geq 6,$ \begin{equation}\label{eqn:psi2identity2decompose}
    \|(\Delta-F^{'}(\psi_0))^{-1}[\sum_{j=2}^{m-1}
\epsilon^{j-2} F^{(j)}(\psi_0)
\frac{(\psi_1+\epsilon\psi_2)^j}{j!}]\|_{C^{1,\frac{1}{2}}}\lesssim_{\psi_0,\psi_1,k} [1+\epsilon\|\psi_2\|_{C^1}+(\epsilon\|\psi_2\|_{C^1})^{\lfloor\frac{k}{2}\rfloor}].
\end{equation}
Now by \eqref{eqn:Llong},\eqref{eqn:psi2identity1decompose}  and \eqref{eqn:psi2identity2decompose}, there exists a constant $C$ depending only on $\psi_0,\psi_1,k$, such that \begin{equation}
    \|\mathcal{L}_0\psi_2\|_{C^{1,\frac{1}{2}}}\leq C[1+\epsilon\|\psi_2\|_{C^1}+(\epsilon\|\psi_2\|_{C^1})^{\lfloor\frac{k}{2}\rfloor}].
\end{equation}  
In particular, choose $C_0=2C$, for sufficiently small $\epsilon$,
the map $\mathcal L_0$ is a compact map on the closed convex set 
$\mathcal K_{C_0}:=\Bigl\{\psi_2:\ \|\psi_2\|_{C^1}\le C_0\Bigr\}$ to itself.
Moreover, similarly to the argument above, by the explicit form of $\mathcal{L}_0$, we have $\mathcal{L}_0\psi_2$ continuously depends on $\psi_2$ in $C^{1}$ topology.
Therefore, by the Schauder fixed point theorem, there exists $\psi_2\in\mathcal K_{C_0}$ such that
\[
\mathcal L_0\psi_2=\psi_2.
\]
\end{proof}
In the next section, we discuss the convergence of $\psi_{\epsilon}$ to $\psi_0$ in the smooth category.
\subsubsection{Conclusion of the proof of Proposition \ref{lemma:prop2.1}}
We first prove that the steady state constructed in Proposition~\ref{prop:C1 approximation} is smooth.

\begin{lemma}\label{prop:smoothness-psi2}
The function $\psi_2$ constructed in Proposition~\ref{prop:C1 approximation} is smooth. Moreover, for every $k\in\mathbb N$ there exists $\epsilon_k>0$ such that
\[
\sup_{0<\epsilon\le \epsilon_k}\|\psi_2\|_{C^k(\Omega)}<\infty .
\]
\end{lemma}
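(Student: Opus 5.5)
We bootstrap regularity through the fixed-point identity $\psi_2=\mathcal L_0\psi_2$ in \eqref{eqn:Llong}. Since $F$ is approximately $m$-flat with $m=\lfloor\frac{k}{2}\rfloor$, both $F$ and $\psi_0$ are smooth. Proposition \ref{Prop:psi1regularity} then makes $\psi_1$ and $\eta$ smooth, so every coefficient appearing in $\mathcal L_0$ is smooth. The only sources of limited regularity are therefore $\psi_2$ itself and the operator $\mathcal L_{\psi_\epsilon}^{-1}$ inside $\mathcal H_\epsilon$.

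The induction hypothesis is that, for $\epsilon\le\epsilon_j$,
\[
\|\psi_2\|_{C^{j,\alpha}}\le C_j
\]
with $C_j$ independent of $\epsilon$. The base case $j=1$ comes from Proposition \ref{prop:C1 approximation} together with the $C^{1,\frac12}$ bound already proved for $\mathcal L_0\psi_2$.

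For the inductive step, assume $\psi_2\in C^{j,\alpha}$ uniformly. Then $\psi_\epsilon=\psi_0+\epsilon\psi_1+\epsilon^2\psi_2$ satisfies
\[
\|\psi_\epsilon-\psi_0\|_{C^{j,\alpha}}\lesssim \epsilon,
\]
so for $\epsilon$ small it lies within the $\epsilon_1$-neighborhood required in part (3) of Proposition \ref{lemma:solve transport}, with $H=\psi_0$ and $\tilde H=\psi_\epsilon$.

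The argument $g_\epsilon$ of $\mathcal L_{\psi_\epsilon}^{-1}$ has the following properties.
\begin{itemize}
\item Its support lies in $\supp F^{(m)}(\psi_0)\cup \mathcal N_{\epsilon_0}$, which is a fixed compact set away from the critical level sets of $\psi_0$. For small $\epsilon$ it therefore stays away from the critical level sets of $\psi_\epsilon$ as well, and $|\nabla\psi_\epsilon|>a/2$ there.
\item It lies in the range of $\mathcal L_{\psi_\epsilon}$. This follows from Proposition \ref{prop:g qualitative} and the fixed-point equation: the $F^{(m)}$ term is an exact $\nabla^\perp\psi_\epsilon$-derivative, and the remaining term $-\mathcal R$ equals $g$ minus exact derivatives.
\item It involves one derivative of $\psi_2$, so $g_\epsilon\in C^{j-1,\alpha}$ uniformly.
\end{itemize}
Part (3) then gives $\mathcal H_\epsilon\in C^{j-1,\alpha}$ uniformly. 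The polynomial terms $\sum_j\epsilon^{j-2}F^{(j)}(\psi_0)(\psi_1+\epsilon\psi_2)^j/j!$ are in $C^{j,\alpha}$. Schauder estimates for $(\Delta-F'(\psi_0))^{-1}$ with Dirichlet data on the smooth domain gain two derivatives, yielding $\psi_2=\mathcal L_0\psi_2\in C^{j+1,\alpha}$ with a uniform bound. We shrink $\epsilon_{j+1}\le\epsilon_j$ as needed and iterate to every $k$. Smoothness of $\psi_2$ for a fixed $\epsilon$ follows the same way.

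The main obstacle is the inductive step's use of part (3) of Proposition \ref{lemma:solve transport}. Its constant depends on $H=\psi_0$ and on the $C^{k,\alpha}$ closeness of $\tilde H$, but here $\tilde H=\psi_\epsilon$ is known only at the current regularity $C^{j,\alpha}$. The estimate of order $j-1$ must therefore require only $\tilde H\in C^{j,\alpha}$; equivalently, the period function $T$ and the action-angle coordinates of $\psi_\epsilon$ (Proposition \ref{prop:regularity period}) must be $C^{j-1,\alpha}$ on the region near $\supp g_\epsilon$, where $\nabla\psi_\epsilon$ is nonvanishing. I would verify this first via the inverse function theorem on the flow map $X(s,t)$ from the proof of part (2). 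On that region the flow of $T\nabla^\perp\psi_\epsilon$ has regularity $C^{j-1,\alpha}$, with bounds depending only on $\|\psi_\epsilon\|_{C^{j,\alpha}}$ and the lower bound $a/2$.
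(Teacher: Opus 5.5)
Your induction is the same bootstrap the paper runs once $\psi_2$ is known to be uniformly $C^{2,\alpha}$. However, there is a genuine gap in the first step $j=1$, the passage from $\psi_2\in C^{1,\alpha}$ to $\psi_2\in C^{2,\alpha}$. At that stage $\psi_\epsilon$ is only $C^{1,\alpha}$, so you are using part (3) of Proposition~\ref{lemma:solve transport} with $k=1$. That part is stated for $k\in\mathbb N$, but its justification goes through Proposition~\ref{prop:regularity period}. That proposition only covers $H\in C^{1,1}$ (Lipschitz period) and $H\in C^{k,\alpha}$ with $k\ge 2$. Nothing in the paper controls $T$ or $\mathcal L^{-1}_{\psi_\epsilon}$ for a merely $C^{1,\alpha}$ Hamiltonian.

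Your proposed verification does not close this either. For $j=1$ the field $T\nabla^\perp\psi_\epsilon$ is only $C^{0,\alpha}$, so the flow map $X(s,t)$ is not differentiable in general and the inverse function theorem cannot be applied to it. Even granting that $X$ is $C^{0,\alpha}$, the composition $g_\epsilon\circ X$ with $g_\epsilon\in C^{0,\alpha}$ is only $C^{0,\alpha^2}$. As written, your argument works only for $j\ge 2$, where the field is at least $C^{1,\alpha}$.

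The paper handles exactly this step with an extra ingredient. From the fixed-point identity it gets $\|\Delta\psi_\epsilon\|_{L^\infty}\lesssim 1$ uniformly. It then invokes Corollary 2.2 of \cite{Nadirashvili2013StationaryEuler}: a steady state with bounded vorticity is $C^{1,1}$ near its regular points. Next it applies part (2) of Proposition~\ref{lemma:solve transport}, which rests on the $C^{1,1}$ period estimate in Proposition~\ref{prop:regularity period}(1), to get $\mathcal H_\epsilon$ uniformly Lipschitz and hence $\psi_2\in C^{2,\alpha}$ uniformly. Only after this does it iterate part (3) with $k\ge 2$, as you do.

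You could instead do $j=1$ by hand without the flow. Near a regular level set, the implicit function theorem applied to the $C^{1,\alpha}$ function $\psi_\epsilon$ gives $C^{1,\alpha}$ graph coordinates $y=\phi(x,h)$ for the streamlines. Write the period, the normalized time $t_y$, and the primitive in Definition~\ref{def:Mx0} as $x$-integrals of $g_\epsilon/|\partial_y\psi_\epsilon|$ along these graphs. Then no composition with a Hölder flow occurs, and $\mathcal L^{-1}_{\psi_\epsilon}g_\epsilon$ is uniformly $C^{0,\alpha}$, which is all Schauder needs. Either way, the base step requires an argument your proposal does not currently supply.
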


\begin{proof}

 From \eqref{eqn:Llong}, we have $\|(\Delta-F^{'}(\psi_0))\psi_2\|_{L^{\infty}}\lesssim 1 $. Now, as $\Delta-F^{'}(\psi_0)$ is invertible from $H_0^2$ to $L^2$, and $F^{'}(\psi_0)$ is smooth, by elliptic regularity theory, we have 
\[
\|\psi_2\|_{L^\infty(\Omega)} \lesssim 1,
\]
uniformly as $\epsilon\to 0$. In particular,
\[
\bigl\|\Delta(\psi_0+\epsilon\psi_1+\epsilon^2\psi_2)\bigr\|_{L^\infty(\Omega)}\lesssim 1.
\]
Therefore, by Corollary 2.2 in \cite{Nadirashvili2013StationaryEuler}, the function
\[
\psi_\epsilon:=\psi_0+\epsilon\psi_1+\epsilon^2\psi_2
\]
is of class $C^{1,1}$ near the regular points of $\psi_\epsilon$.

Based on the proof of Proposition \ref{prop:C1 approximation}, we have \begin{equation}\label{eqn:psi2identityfix}
    \psi_2=
(\Delta-F'(\psi_0))^{-1}[\sum_{j=2}^{\lfloor\frac{k}{2}-1\rfloor}\epsilon^{j-2} F^{(j)}(\psi_0) \frac{(\psi_1+\epsilon\psi_2)^{j}}{j!}+
\,\mathcal{H}_{\epsilon}].
\end{equation}
Since $\psi_\epsilon$ is $C^{1,1}$ near its regular points and the distance  between $\supp(\mathcal{L}_{\psi_{\epsilon}}\mathcal{H}_{\epsilon})$ and the critical level set of $\psi_{\epsilon}$ (by the hypothesis of Proposition~\ref{prop:C1 approximation}) has non-trivial positive lower bound as $\epsilon\rightarrow 0$, Proposition~\ref{lemma:solve transport} yields that
$\mathcal{H}_{\epsilon}$
is Lipschitz (with a uniform bound for  small $\epsilon$). From \eqref{eqn:psi2identityfix}, $(\Delta-F'(\psi_0))\psi_2$ is uniformly bounded in $C^{0,1}$, then  elliptic regularity gives
\[
\psi_2 \in C^{2,\alpha}(\Omega)\qquad \text{for every }\alpha\in(0,1),
\]
with uniform bounds for sufficiently small $\epsilon$. Consequently,
\[
\psi_\epsilon=\psi_0+\epsilon\psi_1+\epsilon^2\psi_2 \in C^{2,\alpha}(\Omega)
\qquad\text{for every }\alpha\in(0,1).
\]

With this improved regularity, we apply Proposition~\ref{lemma:solve transport} again to conclude that for all $\alpha$,
$(\Delta-F'(\psi_0))\psi_2$ has a uniform upper bound for $C^{1,\alpha}$ in $\epsilon$.
Then, elliptic Schauder estimates imply
\[
\psi_2\in C^{3,\alpha}(\Omega)\qquad\text{for every }\alpha\in(0,1),
\]
 with uniform bounds for $\epsilon$ sufficiently small.

Iterating the above arguments, we obtain that $\psi_2$ is smooth. Moreover, for each $k\in\mathbb N$ there exists $\epsilon_k>0$ such that
\[
\sup_{0<\epsilon\le \epsilon_k}\|\psi_2\|_{C^k(\Omega)}<\infty .
\]
\end{proof}
Now that we have constructed smooth steady states $\psi_{\epsilon}$ near $\psi_0$, to conclude the proof of Proposition \ref{lemma:prop2.1}, it remains to show $\psi_{\epsilon}$ fails to solve a semilinear elliptic equation and establish the convergence of $\frac{\nabla \Delta \psi_{\epsilon}}{\nabla \psi_{\epsilon}}$.
\begin{lemma}\label{lemma:Conclusion of the proof of Proposition}
   $\psi_{\epsilon}$ fails to solve a semilinear elliptic equation and  $\lim_{\epsilon\rightarrow 0}\|\frac{\nabla\Delta\psi_{\epsilon}}{\nabla \psi_{\epsilon}}-F^{'}(\psi_0)\|_{L^{\infty}}=0$.
\end{lemma}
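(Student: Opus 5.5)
We need two things for Lemma \ref{lemma:Conclusion of the proof of Proposition}: that $\psi_\epsilon$ admits no global $\tilde F$ with $\Delta\psi_\epsilon=\tilde F(\psi_\epsilon)$, and that the ratio $\frac{\nabla\Delta\psi_\epsilon}{\nabla\psi_\epsilon}$ converges uniformly to $F'(\psi_0)$. The idea for the first is that the vorticity $\Delta\psi_\epsilon$ sees the bump $\epsilon\eta$ only on the single component $\mathcal N_{\epsilon_0}$ of a level band, and not on the other components at the same levels.

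\textbf{Step 1: vorticity on the two branches.} Write
\[
\Delta\psi_\epsilon = F(\psi_0)+\epsilon\bigl(F'(\psi_0)\psi_1+\eta\bigr)+\epsilon^2\Delta\psi_2 .
\]
By Lemma \ref{prop:smoothness-psi2}, $\Delta\psi_2$ is bounded uniformly in $C^1$. Taylor expansion gives $F(\psi_\epsilon)=F(\psi_0)+\epsilon F'(\psi_0)\psi_1+O(\epsilon^2)$, so
\[
\Delta\psi_\epsilon = F(\psi_\epsilon)+\epsilon\,\eta+O(\epsilon^2)
\]
uniformly in $C^1$.

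Choose a level $c$ with $G(c)\neq 0$ and $|c-\psi_0(x_0)|<\epsilon_0$. Since $x_1$ is a saddle and $x_0$ is near it, the level set $\{\psi_0=c\}$ has a component $\Sigma^{(1)}\subset\mathcal N_{\epsilon_0}$ and a second component $\Sigma^{(2)}$ outside $\mathcal N_{\epsilon_0}$. On $\Sigma^{(2)}$ we have $\eta=0$, on $\Sigma^{(1)}$ we have $\eta=G(c)$, and both components avoid the critical points.

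Because $\psi_\epsilon\to\psi_0$ in $C^1$ and $\nabla\psi_0\neq0$ on these curves, the implicit function theorem produces, for small $\epsilon$, nearby components $\Sigma^{(1)}_\epsilon$ and $\Sigma^{(2)}_\epsilon$ of $\{\psi_\epsilon=c\}$. Suppose $\Delta\psi_\epsilon=\tilde F(\psi_\epsilon)$. Then $\Delta\psi_\epsilon$ takes the same value $\tilde F(c)$ on both curves. The expansion instead gives a difference of $\epsilon G(c)+O(\epsilon^2)\neq 0$, a contradiction.

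\textbf{Step 2: $C^2$-open neighborhood.} The same argument is stable under perturbation. If $\tilde\psi$ is $\delta$-close to $\psi_\epsilon$ in $C^2$ with $\delta\ll\epsilon|G(c)|$, the corresponding level curves of $\tilde\psi$ persist, and $\Delta\tilde\psi$ differs from $\Delta\psi_\epsilon$ by at most $\delta$. The two curves therefore still carry different vorticity values. This gives the neighborhood $\mathcal O_\epsilon$ in which no steady state solves a semilinear elliptic equation.

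\textbf{Step 3: the ratio $\frac{\nabla\Delta\psi_\epsilon}{\nabla\psi_\epsilon}$.} Since $\psi_\epsilon$ is steady, $\nabla\Delta\psi_\epsilon$ is parallel to $\nabla\psi_\epsilon$ wherever $\nabla\psi_\epsilon\neq 0$. We control the ratio in two regions.

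\begin{itemize}
\item \emph{Away from critical points.} Where $|\nabla\psi_0|\ge r$, we have $|\nabla\psi_\epsilon|\ge r/2$ for small $\epsilon$, and the ratio equals
\[
\frac{\nabla\Delta\psi_\epsilon\cdot\nabla\psi_\epsilon}{|\nabla\psi_\epsilon|^2}.
\]
By smooth convergence this tends to $\frac{\nabla F(\psi_0)\cdot\nabla\psi_0}{|\nabla\psi_0|^2}=F'(\psi_0)$.
\item \emph{Near critical points.} The supports of $\eta$, of $\mathcal H_\epsilon$ and of $F^{(m)}(\psi_0)$ lie a positive distance from the critical level sets. Using \eqref{eqn:psi2identityfix}, in a fixed neighborhood of each critical point we have $\Delta\psi_\epsilon=P_\epsilon(\psi_\epsilon)$ exactly. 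Here $P_\epsilon$ is the Taylor polynomial of $F$ at $\psi_0$, evaluated at $\psi_0+\epsilon\psi_1+\epsilon^2\psi_2$; this uses that $F^{(m)}$ vanishes there, so the Taylor expansion of $F$ terminates.
\end{itemize}

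The second case is the main obstacle. The goal there is to show the ratio is $P_\epsilon'(\psi_\epsilon)=F'(\psi_0)+O(\epsilon)$. I would first try to verify that the fixed-point identity indeed reduces $\Delta\psi_\epsilon$ to a function of $\psi_\epsilon$ on those neighborhoods. Failing that, I would divide using the Morse lemma: $\nabla\psi_\epsilon$ vanishes only at a nondegenerate point, and $\nabla\Delta\psi_\epsilon$ is a smooth multiple of it on the level curves, with the multiplier converging in $C^0$.
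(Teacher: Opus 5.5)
Your proposal is correct and follows the paper's proof. It compares $\Delta\psi_\epsilon=F(\psi_\epsilon)+\epsilon\eta+O(\epsilon^2)$ on two components of one level set at a value where $G\neq 0$, checks that this gap survives small $C^2$ perturbations, and splits the ratio estimate into a region where $|\nabla\psi_\epsilon|$ is bounded below and a fixed neighborhood of the critical points. The step you flag as the main obstacle is in fact immediate and needs no Morse-lemma fallback: by \eqref{eqn:psifullzeroorder1}, on that neighborhood (where $\eta$ and $\mathcal H_\epsilon$ vanish) one has $\Delta\psi_\epsilon=\sum_{j=0}^{m-1}F^{(j)}(\psi_0)\frac{(\psi_\epsilon-\psi_0)^j}{j!}$, and this equals $F(\psi_\epsilon)$ because $F$ is a polynomial of degree less than $m$ near the critical values, so the ratio is exactly $F'(\psi_\epsilon)$, which is the paper's computation \eqref{eqn:psifullone1}.
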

\begin{proof}

 The smooth convergence of $\psi_{\epsilon}$ follows from Lemma \ref{prop:smoothness-psi2} and \eqref{eqn:Llong}.
Moreover, by \eqref{eqn:defpsi1}, we have \begin{equation}\label{eqn:psifullzeroorder1}
    \begin{aligned}
       \Delta \psi_{\epsilon}-F^{'}(\psi_0) \psi_{\epsilon}=[F(\psi_0)-F^{'}(\psi_0)\psi_0]+\epsilon \eta+\epsilon^2\mathcal{H}_{\epsilon}+\sum_{j=2}^{\lfloor\frac{k}{2}-1\rfloor} \epsilon^{j}F^{(j)}(\psi_0)\frac{(\psi_1+\epsilon\psi_2)^{j}}{j!}.
    \end{aligned}
\end{equation}
Now let $\Gamma_1:=\{\psi_{\epsilon}=\psi_{0}(x_0)\}\cap \mathcal{N}_{\epsilon_0}$, $\Gamma_2$ be a connected component of $\{\psi_{\epsilon}=\psi_{0}(x_0)\}\cap \mathcal{N}_{\epsilon_0}^{c}$.
By applying the inverse function theorem to \(\psi_0\) near the regular level set
\[
\{\psi_0=\psi_0(x_0)\},
\]
and using the uniform \(C^2\)-bound on \(\psi_2\), we see that \(\Gamma_1\) and \(\Gamma_2\) lie within \(O(\epsilon)\) of two connected components of the level set \(\{\psi_0=\psi_0(x_0)\}\).
 In particular, we have $\psi_{0}=\psi_{0}(x_0)+O(\epsilon)$ on $\Gamma_1$ and $\Gamma_2$.  Now, by \eqref{eqn:psifullzeroorder1} and $F\in C^2$, we have  $\Delta \psi_{\epsilon}|_{\Gamma_1}-\Delta \psi_{\epsilon}|_{\Gamma_2}=\epsilon \eta(x_0)+O(\epsilon^2)\neq 0$. $\psi_{\epsilon}$ fails to solve a semilinear elliptic equation. Similarly,   all the steady states whose stream function lies in the set $\|f-\psi_{\epsilon}\|_{C^2}\leq \epsilon^3$ fail to solve a semilinear elliptic equation when $\epsilon$ is small.
As a direct consequence of \eqref{eqn:psifullzeroorder1}, we have 
\begin{equation}\label{eqn:psifullone1}
    \begin{aligned}
        \nabla[\Delta \psi_{\epsilon}]&=F^{'}(\psi_0)\nabla \psi_{\epsilon}-F^{''}(\psi_0)\nabla \psi_0(\epsilon\psi_1+\epsilon^2\psi_2)+\epsilon\nabla\eta+\epsilon^2 \nabla(\mathcal{H_{\epsilon}})\\&\quad +\sum_{j=2}^{\lfloor\frac{k}{2}-1\rfloor} \epsilon^{j}F^{(j+1)}(\psi_0)\frac{(\psi_1+\epsilon\psi_2)^{j}}{j!}\nabla\psi_{0}+\epsilon^{j-1}F^{(j)}(\psi_0)\frac{(\psi_1+\epsilon\psi_2)^{(j-1)}}{(j-1)!}\nabla(\psi_{\epsilon}-\psi_{0})\\ & =\sum_{j=1}^{\lfloor\frac{k}{2}-1\rfloor}\epsilon^{j-1}F^{(j)}(\psi_0)\frac{(\psi_1+\epsilon\psi_2)^{(j-1)}}{(j-1)!}\nabla\psi_{\epsilon}+\epsilon\nabla\eta+\epsilon^2 \nabla(\mathcal{H_{\epsilon}})\\&\quad +\epsilon^{\lfloor\frac{k}{2}\rfloor-1}F^{(\lfloor\frac{k}{2}\rfloor)}(\psi_0)\frac{(\psi_1+\epsilon\psi_2)^{(\lfloor\frac{k}{2}\rfloor-1)}}{(\lfloor\frac{k}{2}\rfloor-1)!}\nabla\psi_{0}.
    \end{aligned}
\end{equation}
As $F$ is \textbf{$\lfloor\frac{k}{2}\rfloor\text{-flat}$} with respect to $\psi_0$ and the support of $\eta$ is away from the critical level set of $\psi_0$, we can choose $\xi$  such that for sufficiently small $\epsilon$, the set $\mathcal{M}:=\psi_0^{-1}(\psi_0(\{|\nabla \psi_0|<\xi\}))$  contains no element in $\mathcal{N}_{\epsilon_0}$, and element in the support of $F^{\lfloor\frac{k}{2}\rfloor}(\psi_0)$ and $\mathcal{H}_{\epsilon}$. $\mathcal{M}$ is an open set around the critical points of $\psi_0$.
In particular, due to \eqref{eqn:psifullone1}, we have for all $x\in \mathcal{M}$, \begin{equation}
    \frac{\nabla\Delta\psi_{\epsilon}}{\nabla \psi_{\epsilon}}(x)=\sum_{j=1}^{\lfloor\frac{k}{2}-1\rfloor}\epsilon^{j-1}F^{(j)}(\psi_0)\frac{(\psi_1+\epsilon\psi_2)^{(j-1)}}{(j-1)!}(x).
\end{equation}
Moreover, based on the uniform bound of $\psi_2$ in $C^{2}$, we have in $\mathcal{M}^{c}$, \begin{equation}
    |\frac{\nabla\Delta\psi_{\epsilon}}{\nabla \psi_{\epsilon}}(x)-F^{'}(\psi_0)|\lesssim_{\xi,\psi_0,\psi_1,\eta} \epsilon.
\end{equation}
Combined the two cases discussed above, we have the $L^{\infty}$ convergence of $\frac{\nabla \Delta \psi_{\epsilon}}{\nabla \psi_\epsilon}$ to $F^{'}(\psi_0)$ as $\epsilon \rightarrow 0$.
    \end{proof}
In the following section, we explicitly construct a smooth Arnold-stable steady state and verify that the stream function is a Morse function with multiple critical points.
\subsection{Construction of an Arnold-stable steady state which is a Morse function and has multiple critical points}

For \(q\in(0,1)\), define
\begin{equation}
    f_q(z)=\frac{z}{1-qz^2}=(f_1(x,y),f_2(x,y)).
\end{equation}
It is known that \(f_q\) maps \(\mathbb D\) conformally onto a simply
connected domain \(\Omega_q\), called the Neumann oval. In this section, we
construct an Arnold-stable steady state on \(\Omega_q\) whose stream function
is a Morse function with multiple nondegenerate critical points.

\begin{proposition}\label{thm:construction}
For almost every \(q\in(\sqrt2-1,1)\), there exists \(\lambda>0\) such that
the unique solution \(\psi^\lambda\) to
\begin{equation}\label{eqn:u def}
\begin{aligned}
    \Delta \psi^\lambda &= 1+\lambda\psi^\lambda
        &&\text{in }\Omega_q,\\
    \psi^\lambda &=0
        &&\text{on }\partial\Omega_q
\end{aligned}
\end{equation}
is a Morse function with multiple critical points.
\end{proposition}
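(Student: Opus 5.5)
The plan is to reduce to the case $\lambda=0$, where $\psi^0=\Delta^{-1}(1)$ can be written in closed form on $\Omega_q$. I would then recover the statement for small $\lambda>0$ by a perturbation argument.

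\textbf{Step 1: explicit formula for $\psi^0$.} Write $w=f_q(z)$ with $z\in\mathbb D$. Since $\Delta|w|^2=4$, we have
\[
\psi^0(w)=\tfrac14\bigl(|w|^2-h(w)\bigr),
\]
where $h$ is harmonic on $\Omega_q$ and $h=|w|^2$ on $\partial\Omega_q$. Pulled back to the disk, $h\circ f_q$ is the harmonic extension of the boundary data
\[
|f_q(e^{i\theta})|^2=|1-qe^{2i\theta}|^{-2}=\frac{1}{1-q^2}\sum_{n\in\mathbb Z}q^{|n|}e^{2in\theta}.
\]
This gives the explicit formula
\[
h\circ f_q(z)=\frac{1}{1-q^2}\,\mathrm{Re}\,\frac{1+qz^2}{1-qz^2},
\qquad
\Phi(z):=\psi^0\circ f_q(z)=\frac14\Bigl(\frac{|z|^2}{|1-qz^2|^2}-\frac{1}{1-q^2}\mathrm{Re}\frac{1+qz^2}{1-qz^2}\Bigr).
\]
Because $f_q$ is conformal and $f_q'\neq0$ on $\bar{\mathbb D}$ (as $q<1$), the critical points of $\psi^0$ correspond to those of $\Phi$, and their nondegeneracy and Morse index are preserved.

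\textbf{Step 2: locate and classify the critical points of $\Phi$.} $\Phi$ is even in $x$ and in $y$, so the origin is critical. Taylor expanding at $0$ gives
\[
\Phi=\text{const}+\tfrac14\Bigl(|z|^2-\tfrac{2q}{1-q^2}\,\mathrm{Re}\,z^2\Bigr)+O(|z|^4).
\]
The Hessian therefore has eigenvalues proportional to $1-\tfrac{2q}{1-q^2}$ and $1+\tfrac{2q}{1-q^2}$. For $q>\sqrt2-1$ the first is negative, so the origin is a nondegenerate saddle.

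Next, restrict $\Phi$ to the real segment $(-1,1)$. There $\Phi<0$ in the interior, it vanishes at $\pm1$, and it has a local maximum at $0$ along this axis. Hence there is at least one minimum $x_*\in(0,1)$, together with its mirror image $-x_*$. Differentiating the explicit rational function, the one-variable critical point equation becomes a polynomial equation in $x^2$, and I would check that it has a unique root in $(0,1)$. By the reflection symmetry $\partial_y\Phi=0$ on the real axis, so $\pm x_*$ are genuine critical points of $\Phi$.

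The two Hessian entries at $x_*$ are explicit real-analytic functions of $q$ on $(\sqrt2-1,1)$. They are not identically zero: near $q\to1$ the domain degenerates to two lobes and the minima are clearly nondegenerate. Hence they vanish only on a discrete set, which is where the ``almost every $q$'' restriction comes from.

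\textbf{Step 3: rule out any other critical points (the main obstacle).} It remains to show there are no critical points off the axes, and none on the imaginary axis other than $0$. I would first try to write $\partial_{\bar z}\Phi=0$ as an algebraic equation in $z$ and $\bar z$. Clearing the denominators $|1-qz^2|^4$ turns the equations $\mathrm{Re}$ and $\mathrm{Im}$ of it into polynomial equations, and after factoring out $xy$ one would hope to show the remaining factor has no zeros in $\mathbb D$.

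If that is too messy, there is a topological fallback. By the Hopf lemma $\nabla\psi^0\neq0$ near $\partial\Omega_q$, so Morse theory on the disk gives $\#\min-\#\text{saddles}=1$ (no interior maxima, since $\psi^0$ is subharmonic). Combined with a direct sign check of $\partial_y\Phi$ on the imaginary axis and monotonicity along rays in each quadrant, this would exclude additional critical points. Either way, $\psi^0$ is a Morse function with exactly three critical points.

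\textbf{Step 4: pass to small $\lambda>0$.} Since $\Delta-\lambda$ is invertible for $\lambda>0$, the solution $\psi^\lambda$ of \eqref{eqn:u def} is unique, and $\psi^\lambda=(\Delta-\lambda)^{-1}1\to\psi^0$ in $C^3(\bar\Omega_q)$ as $\lambda\to0$ by elliptic regularity. The nondegenerate critical points persist and move continuously with $\lambda$ by the implicit function theorem. Outside small neighborhoods of them, $|\nabla\psi^0|$ is bounded below up to the boundary by Step 3 and the Hopf lemma. Hence for small $\lambda$ no new critical points appear, and $\psi^\lambda$ is a Morse function with three critical points.
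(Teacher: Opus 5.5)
Your architecture matches the paper's: an explicit formula for $\psi^0$ via the conformal map, classification of its critical points, then persistence for small $\lambda$. Steps 1, 2 and 4 are sound. Step 4 is more direct than the paper's fixed-point argument, since the problem is linear and $\psi^\lambda-\psi^0=\lambda(\Delta-\lambda)^{-1}\psi^0\to0$ in $C^{2,\alpha}(\bar\Omega_q)$.

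The genuine gap is Step 3, which you rightly call the crux but do not prove. To conclude that $\psi^0$ is Morse you must know \emph{all} its critical points, in particular that there are none with $xy\neq0$. Plan A is only stated as a hope, and the topological fallback cannot do the job. The relation $\#\min-\#\mathrm{saddles}=1$ presupposes that $\psi^0$ is already Morse. Even granting that, it rules nothing out: by the symmetry $(x,y)\mapsto(\pm x,\pm y)$, critical points in the open quadrants come in orbits of four, and four extra minima plus four extra saddles fit the count. Monotonicity along rays from the origin is false for rays near the positive real axis. Along that axis $\Phi$ decreases from the saddle at $0$ to the minimum at $n_q$, then increases to $0$ at $z=1$. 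Your sign check on the imaginary axis is fine, but the open quadrants remain untreated.

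The paper closes this step with a moving plane argument. $\Omega_q$ is symmetric in $y$ and convex in the $y$-direction, by Robertson's criterion $\Re[(1-z^2)f_q'(z)]>0$. Gidas--Ni--Nirenberg applied to $-\psi^0>0$, which satisfies $-\Delta(-\psi^0)=1$ with zero boundary values, gives $\partial_y\psi^0>0$ for $y>0$. Hence all critical points lie on the $x$-axis.

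Alternatively, your Plan A closes quickly if you differentiate in the physical variable $w$ instead of clearing $|1-qz^2|^4$. Set $H(w)=\frac{1}{1-q^2}\frac{1+qz^2}{1-qz^2}$. Then $\partial_w\psi^0=\frac14\bigl(\bar w-\frac12H'(w)\bigr)$ with $H'(w)=\frac{4qz}{(1-q^2)(1+qz^2)}$. So $\nabla\psi^0=0$ if and only if $(1-q^2)\bar z(1+qz^2)=2qz(1-q\bar z^2)$. For $z=re^{i\theta}\neq0$, divide by $r$, multiply by $e^{i\theta}$, and take imaginary parts. This gives $q\sin2\theta\,[(1-q^2)r^2-2]=0$; the bracket is negative on $\mathbb D$, so $\sin2\theta=0$. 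The real part then yields $r=n_q$ on the real axis. On the imaginary axis it yields $(1+2q-q^2)+qr^2(q^2+2q-1)=0$, which is impossible for $q>\sqrt2-1$.

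A smaller point: in Step 2, ``near $q\to1$ the minima are clearly nondegenerate'' is a heuristic, not a proof. The explicit computation is short and gives more. With $s_*=n_q^2$ and $b=1+2q-q^2$, one finds $\partial_x^2\Phi(n_q)=\frac{q^2+2q-1}{(1-q^2)(1-qs_*)^3}>0$. Using $\Delta\Phi=|f_q'|^2$, one also gets $\partial_y^2\Phi(n_q)=\frac{8q^2+2(1-q^2)^2}{b^2(1-qs_*)^4}>0$. So no value of $q$ actually needs to be excluded.
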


In the case where $\lambda=0$, the functions with constant Laplacian on Neumann ovals were previously studied in
\cite{lundberg2021note}. In particular, upon 
reparametrizations, their result shows that, for $q\in (\sqrt{2}-1,1)$, the
solution to
\[
    \Delta\psi_q=1\quad\text{in }\Omega_q,\qquad
    \psi_q=0\quad\text{on }\partial\Omega_q
\]
has three critical points. We include a self-contained derivation of this fact
in our notation and further show that for almost every \(q\) in the above
range, these critical points are nondegenerate, so that \(\psi_q\) is a Morse
function. We then perturb this constant-Laplacian solution to construct
solutions of \eqref{eqn:u def} close to \(\psi_q\), preserving the Morse
structure and the existence of multiple critical points.

\subsubsection{Property of the Neumann ovals and topological behavior of functions with constant Laplacian in  Neumann ovals}
We first discuss various properties of the Neumann ovals.
Based on Proposition \ref{prop: convex}, we have 
\begin{lemma}
For $q\in (0,1)$, $f_{q}(z)$ is univalent from $\mathbb{D}$ to its image.   $f_{q}$ maps the \(x\)-axis  to the \(x\)-axis  and  the \(y\)-axis to the \(y\)-axis. $f_{q}(\mathbb{D})$ is symmetric with respect to both the \(x\)- and \(y\)-axes.  Moreover, $f_{q}(\mathbb{D})$ is convex in the \(y\)-direction and $f_{q}(\mathbb{D})$ contains $B_{\frac{1}{2}}(0)$.  
\end{lemma}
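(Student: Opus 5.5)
The plan is to verify each assertion directly from the explicit formula $f_q(z)=z/(1-qz^2)$, citing Proposition \ref{prop: convex} (from the appendix) for the parts that are not immediate.

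\textbf{Univalence.} I would either quote Proposition \ref{prop: convex} or argue directly: if $f_q(z)=f_q(w)$, then $z(1-qw^2)=w(1-qz^2)$, which factors as
\[
(z-w)(1+qzw)=0 .
\]
For $z,w\in\mathbb D$ we have $|qzw|<1$, so the second factor never vanishes and hence $z=w$. Since $q<1$, the denominator $1-qz^2$ has no zeros on $\bar{\mathbb D}$, so $f_q$ is holomorphic on a neighborhood of $\bar{\mathbb D}$.

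\textbf{Axes and symmetry.} For real $x$, $f_q(x)=x/(1-qx^2)$ is real. For $z=iy$, $f_q(iy)=iy/(1+qy^2)$ is purely imaginary. The map has real coefficients and is odd, so
\[
f_q(\bar z)=\overline{f_q(z)}, \qquad f_q(-z)=-f_q(z).
\]
Combining these gives $f_q(-\bar z)=-\overline{f_q(z)}$. Because $\mathbb D$ is invariant under $z\mapsto \bar z$ and $z\mapsto -\bar z$, the image is symmetric about both coordinate axes.

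\textbf{Convexity in the $y$-direction.} This is the main obstacle. I would take it from Proposition \ref{prop: convex}, presumably a criterion of Hengartner--Schober type for convexity in a direction. To check it directly, the criterion asks that
\[
\operatorname{Re}\bigl((1-z^2)f_q'(z)\bigr)\ge 0 \quad\text{on }\mathbb D,
\]
possibly after a rotation by $i$. Here $f_q'(z)=(1+qz^2)/(1-qz^2)^2$. One checks the sign on the boundary $z=e^{i\theta}$ and extends to $\mathbb D$ by harmonicity. As an alternative, I would show that each vertical line meets $\partial\Omega_q$ in at most two points, using the boundary parametrization $f_q(e^{i\theta})$ and the monotonicity of its real part on $[0,\pi/2]$.

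\textbf{The disc $B_{1/2}(0)$.} On $|z|=1$ we have $|1-qz^2|\le 1+q<2$, so
\[
|f_q(e^{i\theta})|\ge \frac{1}{1+q}>\frac12 .
\]
The boundary curve is a Jordan curve lying outside $\bar B_{1/2}(0)$ that winds once around $0=f_q(0)$. Therefore the image domain contains $B_{1/2}(0)$.
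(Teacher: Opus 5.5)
Your arguments for univalence, the behaviour on the axes, the two reflection symmetries, and $B_{1/2}(0)\subset f_q(\mathbb D)$ match the paper's: the factorization $(z-w)(1+qzw)=0$, the reflection identities, and $|f_q(e^{i\theta})|\ge 1/(1+q)$ together with the Jordan curve theorem. One caveat: Proposition \ref{prop: convex}, as stated in the appendix, assumes univalence, so it cannot be quoted to obtain it; your direct argument is the one to use.

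The real difference is in the convexity in $y$. The paper uses exactly the criterion you name, and no rotation is needed, since $\Re[(1-z^2)f_q'(z)]>0$ is precisely the hypothesis of Proposition \ref{prop: convex}. The paper verifies it at every interior point $z=\rho e^{i\theta}$: it clears the denominator $|1-qz^2|^4$ and shows that the resulting concave quadratic in $c=\cos 2\theta$ is positive on $[-1,1]$.

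Your boundary-plus-harmonicity route is valid, but you leave the boundary check undone. It works because at $\rho=1$ that numerator factors as
\[
(1-q)(1-c)\bigl(1+4q+q^2+2qc\bigr)\ge 0,
\]
which vanishes only at $z=\pm1$. Since $(1-z^2)f_q'(z)$ is holomorphic on a neighborhood of $\bar{\mathbb D}$ (because $q<1$) and equals $1$ at $z=0$, the strong minimum principle gives the strict inequality in $\mathbb D$, which is what the proposition requires. This replaces the paper's two-parameter positivity check by a one-variable factorization.

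Your fallback is also sound and avoids Robertson's criterion altogether. The boundary real part is
\[
\Re f_q(e^{i\theta})=\frac{(1-q)\cos\theta}{(1+q)^2-4q\cos^2\theta},
\]
which is strictly decreasing on $[0,\pi/2]$ and has the sign of $\cos\theta$. By the reflection symmetries, every vertical line therefore meets the Jordan curve $f_q(\partial\mathbb D)$ in at most two points, using injectivity on $\bar{\mathbb D}$, which your factorization gives since $|qzw|\le q<1$ there. Hence each vertical line meets the interior in either the empty set or a single segment.
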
 
\begin{proof}
    It is clear that $f_{q}$ is holomorphic; it suffices to prove $f_{q}$ is injective. Assume $f_{q}$ is not injective, then there exists $z_1\neq z_2$, such that $f_{q}(z_1)=f_{q}(z_2)$, which is equivalent to $(z_1-z_2)(1+q z_1z_2)=0$. By assumption, $z_1-z_2\neq 0$ and $|1+qz_1z_2|>1-q|z_1z_2|=1-q>0$, which is a contradiction. Hence $f_{q}$ is a univalent function. Moreover, it is direct to verify $f_{q}(z)=\overline{f_{q}(\overline{z})}$ and   $if_{q}(iz)=\overline{i f_{q}(-i\overline{z})}$, we have $f_{q}$ maps the  \(x\)-axis to the \(x\)-axis and the \(y\)-axis to the \(y\)-axis and  $f_{q}(\mathbb{D})$ is symmetric with respect to both the \(x\)- and \(y\)-axes. 
    Now we check the convexity of $f_{q}(\mathbb{D})$ in \(y\)-direction. By Proposition \ref{prop: convex}, it suffices to check $\Re[ f_{q}^{'}(z)(1-z^2)]>0$ for all $z\in \mathbb{D}$. By explicit calculation, we have \begin{equation}
        f_{q}^{'}(z)(1-z^2)= \frac{(1-z^2)(1+qz^2)}{(1-qz^2)^2},
    \end{equation}
    and thus it suffices to check $\Re\Big[(1+qz^{2})(1-z^{2})(1-q\bar z^{2})^{2}\Big]>0$, for all $z\in \mathbb{D}$.
    Let $z=\rho e^{i\theta}$ with $\rho\in[0,1)$ and $c=\cos{2\theta}$, we have 
\begin{align*}
\Re\Big[(1+qz^{2})(1-z^{2})(1-q\bar z^{2})^{2}\Big]
&=
-2q(1-q)\rho^{4}\,c^{2}
+(1+q)\rho^{2}(q^{2}\rho^{4}-1)\,c
+\Big(1+3q(1-q)\rho^{4}-q^{3}\rho^{8}\Big) \\
&=
-2q(1-q)\rho^{4}\cos^{2}(2\theta)
+(1+q)\rho^{2}(q^{2}\rho^{4}-1)\cos(2\theta)
+\Big(1+3q(1-q)\rho^{4}-q^{3}\rho^{8}\Big)\\&:=F(c).
\end{align*} 
As $-2q(1-q)\rho^{4}<0$, $(1+q)\rho^{2}(q^{2}\rho^{4}-1)<0$ and $F(1)=(1-\rho^2)(1-q\rho^2)(1-q^2\rho^4)>0$, we have $F(c)>0$ for all $c\in[-1,1]$.
In the end, $f_{q}$ is univalent, $f_{q}(\partial \mathbb{D})$ is a closed Jordan curve and $0$ is in the bounded component of $\mathbb{C}-f(\partial \mathbb{D})$. As for all $z\in \partial{\mathbb{D}}$, we have $|f_{q}(z)|=\frac{1}{|1-qz^2|}\geq \frac{1}{1+q}\geq\frac{1}{2}$, by the Jordan 
curve theorem, $f_{q}(\mathbb{D})$ contains $B_{\frac{1}{2}}(0)$.
\end{proof}
From now on, for a fixed $q\in (0,1)$, we denote $\Omega_{q}:=f_{q}(\mathbb{D})$. $\Omega_q$ is called the Neumann oval. Now we consider the steady states in the Neumann ovals with constant vorticity, and we have the following result.
\begin{proposition}\label{prop:constant vorticity 1}
    For  $q\in(\sqrt{2}-1,1)$, let $\psi_q$ be the solution to the following equation.\begin{equation}\label{eqn: constant vorticity}
        \begin{aligned}
             &\Delta\psi_{q}=1 \text{ in $\Omega_{q}$,}
             \\& \psi_{q}=0 \text{ in $\partial \Omega_{q}$.}
        \end{aligned}
    \end{equation} 
    Then $\psi_q$ is even both in $x$ and $y$.
    Moreover, $n_{q}=\sqrt{\frac{q^2+2q-1}{q+2q^2-q^3}}$ and $x_{\pm}=(f_{q}(\pm n_{q}),0)$,  then $\{0,x_{\pm}\}$ are the critical points of $\psi_{q}$.
    \end{proposition}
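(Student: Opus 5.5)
The plan is to compute $\psi_q$ explicitly by pulling back to the unit disk through the conformal map $f_q$, and then to solve $\nabla\psi_q=0$ as a simple algebraic equation in $z$.

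\textbf{Evenness.} This follows from uniqueness for the Dirichlet problem \eqref{eqn: constant vorticity}. Since $\Omega_q$ is symmetric with respect to both axes (previous lemma), the functions $\psi_q(-x,y)$ and $\psi_q(x,-y)$ solve the same problem. Hence they coincide with $\psi_q$.

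\textbf{Explicit formula.} For the formula I would write $\psi_q=\tfrac14\bigl(|w|^2-h(w)\bigr)$, where $h$ is harmonic on $\Omega_q$ and equals $|w|^2$ on $\partial\Omega_q$. Pulling back by $w=f_q(z)$ and using $\bar z=1/z$ on $|z|=1$, one gets
\[
|f_q(z)|^2=\frac{z}{1-qz^2}\cdot\frac{z}{z^2-q}=\frac{A}{1-qz^2}+\frac{B}{z^2-q}\quad\text{on }\partial\mathbb D ,
\]
with real constants $A,B$. Evaluating the residue at $z^2=1/q$ gives $A=\frac{1}{1-q^2}$. On $|z|=1$ the second term equals $B\bar z^2/(1-q\bar z^2)$, which is antiholomorphic. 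Therefore the harmonic extension is
\[
h\circ f_q(z)=\frac{A}{1-qz^2}+\overline{\Bigl(\frac{Bz^2}{1-qz^2}\Bigr)} ,
\]
and this is automatically real, since it agrees with a real function on the boundary and the Dirichlet problem has a unique solution.

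\textbf{Critical point equation.} Because $f_q$ is conformal with $f_q'\neq0$ in $\mathbb D$, the critical points of $\psi_q$ correspond to the zeros of $\partial_z(\psi_q\circ f_q)$. Only the terms $|f_q|^2$ and $A/(1-qz^2)$ carry $z$-dependence. Using $f_q'=(1+qz^2)/(1-qz^2)^2$, the condition $\partial_z(\psi_q\circ f_q)=0$ reduces to
\[
(1+qz^2)\,\overline{f_q(z)}=2qA\,z .
\]
Substituting $\overline{f_q(z)}=\bar z/(1-q\bar z^2)$ and clearing denominators gives
\[
\bar z\,(1+2q^2A r^2)=z\,(2qA-q r^2),\qquad r=|z| .
\]

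\textbf{Solving it.} The case $z=0$ is a solution. For $z=re^{i\theta}\neq0$ the equation forces $e^{-2i\theta}=\frac{2qA-qr^2}{1+2q^2Ar^2}\in\mathbb R$, so $e^{2i\theta}=\pm1$.

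The case $e^{2i\theta}=-1$ (the imaginary axis) would require $2qA-qr^2=-(1+2q^2Ar^2)$. This is impossible because the left-hand side is positive: $2A=\frac{2}{1-q^2}>1>r^2$.

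The case $e^{2i\theta}=1$ (the real axis) gives $2qA-qr^2=1+2q^2Ar^2$, that is,
\[
r^2=\frac{2qA-1}{q(1+2qA)}=\frac{q^2+2q-1}{q(2q+1-q^2)}=n_q^2 .
\]
We need $0<n_q^2<1$. The numerator is positive exactly when $q>\sqrt2-1$. The bound $n_q<1$ reduces to $q^3-q^2-2q+1<0$, and I would verify this on $(\sqrt2-1,1)$ by checking monotonicity or the endpoint values. This yields exactly the three critical points $0$ and $f_q(\pm n_q)$, that is, $\{0,x_\pm\}$.

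I expect the main obstacle to be ruling out critical points off the real axis. The key algebraic step is to write the equation in the form $\bar z\cdot(\text{positive})=z\cdot(\text{real})$, since this pins down the argument of $z$ immediately. The remaining work is routine bookkeeping of the constants $A$ and $B$ (only $A$ enters) and checking $n_q<1$.
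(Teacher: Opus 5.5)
Your proof is correct, and it locates the critical points by a genuinely different route from the paper's.

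\textbf{The paper's route.} The paper subtracts $y^2/2$ and recovers the harmonic part from a Fourier cosine expansion of $-\tfrac12 f_2^2$ on $\partial\mathbb D$. It then uses the moving plane method to confine every critical point to the $x$-axis. That step relies on the $y$-convexity of $\Omega_q$ (from Robertson's criterion) and on $\psi_q\le 0$. A one-variable computation on the axis then produces $0$ and $\pm n_q$.

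\textbf{Your route.} You subtract $|w|^2/4$ instead and get the harmonic part in closed form from $\bar z=1/z$ and a partial-fraction split. Indeed $B=q/(1-q^2)=qA$, so the extension is $2A\,\Re\frac{1}{1-qz^2}-A$. You then solve $\partial_z(\psi_q\circ f_q)=0$ on all of $\mathbb D$ at once. The identity $\bar z(1+2q^2Ar^2)=z(2qA-qr^2)$ pins $z$ to the coordinate axes, and your sign argument excludes the imaginary axis.

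\textbf{What each buys.} Your argument proves the ``exactly three critical points'' part in a self-contained way, needing neither Gidas--Ni--Nirenberg nor the convexity lemma. It also makes transparent the roles of $q>\sqrt2-1$ (giving $n_q^2>0$) and $q<1$ (giving $n_q<1$). Finally, it gets the additive constant right. The paper's $C_0$ counts the diagonal pairs $k_1=k_2$ twice, so its formula for $\psi_q$ is off by the constant $-\frac{1}{4(1-q^2)}$; this is harmless for critical points.

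\textbf{A computational slip to fix.} The denominator $q(1+2q-q^2)$ is positive on $(0,1)$. So $n_q^2<1$ is equivalent to $q^2+2q-1<q+2q^2-q^3$, that is, $q^3-q^2+q-1=(q-1)(q^2+1)<0$, which holds for every $q<1$. The polynomial you wrote, $q^3-q^2-2q+1$, is not equivalent: it equals $5\sqrt2-7>0$ at $q=\sqrt2-1$. The endpoint or monotonicity check you planned would therefore fail near the left end of the interval. With the corrected reduction, the bound is immediate.
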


    \begin{proof}
    $\psi_q(x,y)-\psi_q(x,-y)$ is a harmonic function that vanishes at $\partial \Omega_{q}$, hence $\psi_q(x,y)-\psi_q(x,-y)=0$ and $\psi_q$ is even in $y$. Similarly, $\psi_q$ is even in $x$. 
    Now we will explicitly calculate $\psi_q$.
    \begin{lemma}\label{lemma: psi expression}
        Let $\psi_q$ be the solution to the following equation.\begin{equation}
        \begin{aligned}
             &\Delta\psi_{q}=1 \text{ in $\Omega_{q}$,}
             \\& \psi_{q}=0 \text{ in $\partial \Omega_{q}$.}
        \end{aligned}
    \end{equation}
    Then we have \begin{equation}
       \psi_q(x,y)=\frac{y^2}{2}+\Re(\Tilde{\Phi}_{q}\circ f_{q}^{-1}(x+yi)), \text{where $\Tilde{\Phi}_{q}(z)=\frac{z^2}{4(1-qz^2)^2}-\frac{1}{2(1-q^2)(1-qz^2)}$.}
    \end{equation}
    \end{lemma}
    
    \begin{proof}
         Define $\phi_q=\psi_q-\frac{y^2}{2}$, then $\phi$ satisfies \begin{equation}
\begin{aligned}
   &\Delta \phi_q(x,y)=0,\\
 &\phi_{q}(x,y)=\frac{-y^2}{2}, \text{for $(x,y) \in \partial \Omega_{q}.$}
\end{aligned}
\end{equation}
Due to the fact that $\Omega_{q}$ is simply connected, there is a holomorphic function $\Phi_{q}$ such that \begin{equation}
    \phi_q=Re(\Phi_{q}).
\end{equation}
Thus to calculate $\psi_q$, it suffices to calculate $\Phi_{q}$. We now define \begin{equation}
    \Tilde{\Phi}_{q}=\Phi_{q}\circ f_{q}, 
\end{equation}
we have \begin{equation}\label{Riemann}
    \begin{aligned}
    &\Re(\Tilde{\Phi}_{q}(e^{i \theta}))=\frac{-f_2^2(e^{i\theta})}{2}
    &=\frac{-[\sum_{k=0}^{\infty}q^{k}\sin{(2k+1)\theta}]^2}{2}=\sum_{k=0}^{\infty} C_{k} \cos{2k\theta}\end{aligned},
\end{equation}
where \begin{equation}
    \begin{aligned}
    C_k&=\frac{1}{4}\sum_{k_1+k_2=k-1} q^{k_1+k_2}-\frac{1}{4}\sum_{|k_1-k_2|=k} q^{k_1+k_2}\\
    &=\frac{kq^{k-1}}{4}-\frac{q^{k}}{2(1-q^2)}.
    \end{aligned}
\end{equation}
By \eqref{Riemann}, we have \begin{equation}
    \begin{aligned}
     \Tilde{\Phi}_{q}&=\sum_{k=0}^{\infty}(\frac{kq^{k-1}}{4}-\frac{q^{k}}{2(1-q^2)})z^{2k}=\frac{z^2}{4(1-qz^2)^2}-\frac{1}{2(1-q^2)(1-qz^2)}.
    \end{aligned}
\end{equation}
Moreover, we have \begin{equation}\label{eqn:f inverse }
    f_{q}^{-1}(z)=\frac{-1+\sqrt{1+4qz^2}}{2qz},
\end{equation} 
and hence, we get \begin{equation}
    \Phi_{q}(z)=\Tilde{\Phi}_{q}\circ f_{q}^{-1}(z)=\frac{z^2}{4}-\frac{\sqrt{1+4qz^2}+1}{4(1-q^2)}
\end{equation}
 Based on the above calculation, we can calculate $\psi_q$ analytically, \begin{equation}\label{Riemann2}
\begin{aligned}
\psi_q(x,y)&=\frac{y^2}{2}+\Re(\Tilde{\Phi}_{q}\circ f_{q}^{-1}(x+yi)).
\end{aligned}
 \end{equation}
    \end{proof}
 Now that we have an explicit expression of $\psi_q$ in Lemma \ref{lemma: psi expression}, we calculate the critical points of $\psi_q$.  First, as $\Omega_{q}$ is convex in the \(y\)-direction and $\psi_{q}$ is non-positive, by the moving plane method, the critical points of $\psi_q$ have to lie on the  \(x\)-axis. In addition, as $\nabla(\frac{y^2}{2})=0$ on $\{y=0\}$. Hence, the critical points of $\psi_q$ have to be critical points of $\Re(\Tilde{\Phi}_{q}\circ f_{q}^{-1}(x+yi))$. 
 Based on the fact that $f_{q}$ is univalent, it maps the \(x\)-axis to the  \(x\)-axis, it suffices to know the critical points of $\Tilde{\Phi}$ on the  \(x\)-axis. Define $n_{q}=\sqrt{\frac{q^2+2q-1}{q+2q^2-q^3}}$.
By explicit calculations, $0$ and $(\pm n_{q},0)$ are the three critical points of $\Tilde{\Psi}$ on the  \(x\)-axis.
Let $x_{\pm}=(f_{q}(\pm n_q),0)$,  $\{0,x_{\pm}\}$ are the three critical points of $\psi_{q}$. 
    \end{proof}
In the next theorem, we are going to use the formula in Lemma \ref{lemma: psi expression} to show $\psi_{q}$ is a Morse function.
\begin{proposition}\label{prop:constant vorticity 2}
    For almost every $q\in (\sqrt{2}-1,1)$, $0$ is the non-degenerate saddle point for $\psi_{q}$, and $x_{\pm}$ are the non-degenerate local minima.
\end{proposition}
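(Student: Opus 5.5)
The plan is to compute the Hessian of $\psi_q$ at the three critical points directly from Lemma \ref{lemma: psi expression} and reduce nondegeneracy to the nonvanishing of a real-analytic function of $q$. Since $\psi_q=\frac{y^2}{2}+\Re\Phi_q(x+iy)$ with $\Phi_q(z)=\frac{z^2}{4}-\frac{\sqrt{1+4qz^2}+1}{4(1-q^2)}$ holomorphic, write $\Phi_q''=a+ib$. Then
\[
D^2\psi_q=\begin{pmatrix} a & -b\\ -b & 1-a\end{pmatrix}.
\]
At real points $z=x\in\mathbb R$ we have $b=0$ because $\Phi_q$ is real on the real axis. So the Hessian is diagonal with entries $a$ and $1-a$, where $a=\Phi_q''(x)$. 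Nondegeneracy then amounts to $a\neq 0$ and $a\neq 1$. The saddle/minimum type is read off from the signs: the point is a minimum if $0<a<1$ and a saddle if $a<0$ or $a>1$.

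At $z=0$ I expand $\sqrt{1+4qz^2}=1+2qz^2+O(z^4)$, which gives $\Phi_q''(0)=\frac12-\frac{q}{1-q^2}$. For $q>\sqrt2-1$ we have $q^2+2q-1>0$, i.e. $\frac{q}{1-q^2}>\frac12$, so $a<0$ and the origin is a nondegenerate saddle for every such $q$, not merely almost every $q$. This is consistent with the existence of the two other critical points $x_\pm$ on the $x$-axis.

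At $x_\pm=f_q(\pm n_q)$, I work in the physical variable $X=f_q(\pm n_q)$ and set $h(X)=\Phi_q(X)$ on the real axis. The critical points satisfy $h'(X)=\frac X2-\frac{qX}{(1-q^2)\sqrt{1+4qX^2}}=0$, i.e. $\sqrt{1+4qX^2}=\frac{2q}{1-q^2}$. Then
\[
h''(X)=\frac12-\frac{q}{1-q^2}(1+4qX^2)^{-3/2}=\frac12-\frac{(1-q^2)^2}{8q^2}.
\]
I check that this lies strictly in $(0,1)$ for $q\in(\sqrt2-1,1)$. Indeed $\frac{1-q^2}{2q}<1$ is equivalent to $q^2+2q-1>0$, which gives $h''>\frac12-\frac12=0$, and clearly $h''<\frac12<1$. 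So the $x_\pm$ are nondegenerate local minima, which is consistent with $\psi_q\le 0$ attaining its minimum there.

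The main obstacle I expect is bookkeeping rather than substance. I need to make sure the branch of $f_q^{-1}$ and the square root are the correct ones on $\Omega_q$, so that $\Phi_q$ is genuinely holomorphic there; here $1+4qz^2$ must avoid $(-\infty,0]$ on $\Omega_q$, which follows from the explicit formula and the symmetry established above. I also need to confirm that $X^2=\frac{(4q^2/(1-q^2)^2)-1}{4q}$ matches $f_q(n_q)^2$, so that these are exactly the points $x_\pm$ of Proposition \ref{prop:constant vorticity 1}. If some boundary case made a Hessian entry vanish, I would fall back on real-analyticity in $q$: each Hessian determinant is a nontrivial real-analytic function on $(\sqrt2-1,1)$, so it vanishes only on a discrete set. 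That argument yields the almost-every-$q$ statement in any case.
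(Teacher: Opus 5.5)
Your proof is correct, and it proves more than the statement: the Hessians at $0$ and $x_\pm$ are nondegenerate of the claimed type for \emph{every} $q\in(\sqrt2-1,1)$, not just almost every $q$.

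At the origin you argue as the paper does, by Taylor-expanding $\sqrt{1+4qz^2}$.

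At $x_\pm$ the paper takes a softer route. Joint analyticity in $(q,z)$ makes $\det D^2\psi_q(x_\pm)$ a Puiseux function of $\sqrt{q-\sqrt2+1}$. An expansion near $q=\sqrt2-1$ shows it is not identically zero, so it vanishes only on a null set of $q$. The paper then gets the type of $x_\pm$ from the maximum principle (no maxima) and a Morse degree count (one saddle forces the other two points to be minima).

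You instead use the critical-point relation $\sqrt{1+4qX^2}=\frac{2q}{1-q^2}$ to eliminate $X$, which gives the closed form
\[
a=\Phi_q''(X)=\frac12-\frac{(1-q^2)^2}{8q^2}=\frac{(q^2+2q-1)(1+2q-q^2)}{8q^2}\in\Bigl(0,\tfrac12\Bigr).
\]
Nondegeneracy and the minimum type then follow directly from $D^2\psi_q=\mathrm{diag}(a,1-a)$. You need no analyticity in $q$, no Puiseux expansion and no degree argument, so your ``fallback'' is unnecessary.

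The closed form also avoids a delicate point in the paper's perturbative step. There, the $O(|x_\pm|^2)$ correction to $D^2\psi_q(0)$ is of the same order, $q-\sqrt2+1$, as $\det D^2\psi_q(0)$ itself, so it must be tracked exactly. Your formula gives
\[
\det D^2\psi_q(x_\pm)=a(1-a)\approx(2+\sqrt2)(q-\sqrt2+1).
\]
In the paper's normalization (half the second derivatives) this is $\frac{2+\sqrt2}{4}(q-\sqrt2+1)$, which does not match the constant stated there. Only its sign is used, so the discrepancy is harmless.

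The two checks you flag are routine.
\begin{itemize}
\item $X^2=\frac{4q^2-(1-q^2)^2}{4q(1-q^2)^2}$ coincides with $f_q(n_q)^2$.
\item The branch issue disappears. For $\zeta=f_q^{-1}(w)\in\mathbb D$ one has $1+4qw^2=\bigl(\frac{1+q\zeta^2}{1-q\zeta^2}\bigr)^2$, and the base has positive real part. In any case you only need holomorphy near the real axis.
\end{itemize}

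The paper's method has the advantage of not requiring an explicit elimination. Here, though, the elimination is available, and your argument is both sharper and self-contained.
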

\begin{proof}
    First, the Taylor series for $\sqrt{1+4qz^2}$ converges absolutely for $|z|\leq \frac{1}{2}$, from Lemma \ref{lemma: psi expression}, we have \begin{equation}
    \psi_{q}(x,y)=\frac{y^2}{2}+\Re [\frac{(x+yi)^2}{4}-\frac{\sum_{k=1}^{\infty}{\frac{1}{2}\choose k }(4qz^2)^{k}}{4(1-q^2)}].
\end{equation}
In particular, we have $Hess(\psi_{q})(0)= \begin{bmatrix}
        &\frac{1-q^2-2q}{4(1-q^2)} & 0\\
        &0&   \frac{1-q^2+2q}{4(1-q^2)}
    \end{bmatrix}.$
    It is direct to verify that for all $q\in(\sqrt{2}-1,1)$, $0$ is a strict saddle point.\\
Now we want to prove for a.e. $q\in (\sqrt{2}-1,1)$, $x_{\pm}$ are non-degenerate critical points.
First, we have $1+4qz^2 \neq 0$, for all $z\in \Omega_{q}$. Otherwise, suppose that there exists an $\eta \in \mathbb{D}$ such that  $1+4qf_{q}^2(\eta)=0$, and we have $(1+q\eta^2)^2=0$, which is a contradiction. Hence, we have $\Tilde{\Phi}_{q}(z)$ is jointly analytic in $q$ and $z$. Moreover, by the inverse function theorem, we have $f_{q}^{-1}(z)$ is jointly analytic in $q$ and $z$. As a consequence, we have  $D^2\psi_{q}(x_{+})$ and $D^2\psi_{q}(x_{-})$ are  Puiseux functions in $\sqrt{q-\sqrt{2}+1}$ for all  $q\in [\sqrt{2}-1,1)$. Hence, it suffices to prove $Det(D^2\psi_{q}(x_{+}))$ is not identically zero. Note that $|q-\sqrt{2}+1|\ll 1$, we have $|x_{\pm}|\ll 1$, and hence  again from the absolute convergence for the Taylor expression of $\sqrt{1+4qz^2}$ for $|z|\leq \frac{1}{2}$, we have $Hess(x_{\pm})=\begin{bmatrix}
        &\frac{1-q^2-2q}{4(1-q^2)} & 0\\
        &0&   \frac{1-q^2+2q}{4(1-q^2)}
    \end{bmatrix}+O(|x_{\pm}|^2),$ for $0<q-\sqrt{2}+1\ll 1$.
    As a result, we have $Det(D^2\psi_{q}(x_{\pm}))=\frac{-7+5\sqrt{2}}{4}(q-\sqrt{2}+1)+O((q-\sqrt{2}+1)^2)$. Thus, for almost every $q\in(\sqrt{2}-1,1)$, $x_{\pm}$ are non-degenerate critical points. Based on the maximum principle, there are no local maxima for $\psi_{q}$. In addition, as we have proved $0$ is a strict saddle point, then from the Morse degree theory $x_{\pm}$ have to be strict local minima.
\end{proof}
    \subsubsection{Proof of Proposition \ref{thm:construction}}
    In this section, we are going to finish the proof of Proposition \ref{thm:construction}.
    \begin{proof}
        Due to Proposition \ref{prop:constant vorticity 1} and Proposition \ref{prop:constant vorticity 2}, for almost every $q\in(\sqrt{2}-1,1)$, the solution $\psi_{q}$ to \eqref{eqn: constant vorticity} is a Morse function with three non-degenerate critical points.
      Now for $\lambda \ll1,$ let $\psi_{\lambda}$ be the solution to \eqref{eqn:u def}.
  Denote $\Tilde{\psi}:=\psi^{\lambda}-\psi_{q}$, \eqref{eqn:u def} is equivalent to 
    \begin{equation}\label{eqn:Shauder fixed point}
        \begin{aligned}
            \Tilde{\psi}=\lambda \Delta^{-1}\Tilde{\psi}+\lambda \Delta^{-1} \psi_{q}.
        \end{aligned}
    \end{equation}
    By a similar application of the Schauder fixed point theorem in the proof of Proposition \ref{prop:C1 approximation}, for $\lambda\ll 1$, there exists a solution to \eqref{eqn:Shauder fixed point} with $\|\Tilde{\psi}\|_{L^{\infty}}\leq C$. Similarly, as in the proof of Proposition \ref{prop:smoothness-psi2}, for $\lambda \ll 1$, $\Tilde{\psi}$ is a smooth function, and for all $k\in \mathbb{N}$, there is a constant $C_{k}$, such that $\|\Tilde{\psi}\|_{C^{k}}\leq C_{k}\lambda$. As a result, $\psi^{\lambda}:=\Tilde{\psi}+\psi_{q}$ is a Morse solution to \eqref{eqn:u def}, with three non-degenerate critical points.  
    \end{proof}
    In the end of this section, we finish the proof of Proposition \ref{thm:main} by approximating the steady state in Proposition \ref{thm:main} by smooth steady states satisfying the conditions in Section 2.1.
    \subsection{Conclusion of the proof of Proposition \ref{thm:main}}\label{sec:conclusion}
    We first show that we can approximate general Arnold-stable steady states by smooth Arnold-stable steady states satisfying semilinear elliptic equations with approximately flat nonlinearities. 
    \begin{proposition}\label{prop:F approximate flat}
    Let $\psi_0$ be a solution to $\psi_0=\Delta^{-1}F(\psi_0)$ in a smooth simply connected domain $\Omega$. Assume $F$ is $C^{\lfloor\frac{k}{2}-1\rfloor}$, the operator $\Delta-F^{'}(\psi_0)$ is invertible from $H_{0}^2$ to $L^2$ and $\psi_0$ is a Morse function, then for all $0<\delta\ll 1 $, there is a smooth function $F_{\delta}$ such that the following hold.
    \begin{itemize}
        \item[Property A] There is a solution $\psi_{\delta}$ to $\psi_{\delta}=\Delta^{-1}(F_{\delta}(\psi_{\delta}))$ satisfying $\lim_{\delta\rightarrow 0}\|\psi_{\delta}-\psi_0\|_{C^{\lfloor\frac{k}{2}-1\rfloor}}=0$. 
        \item [Property B]$\psi_{\delta}$ is a smooth Morse function and $F_{\delta}$ is approximately \textbf{$\lfloor \frac{k}{2}\rfloor$-flat} with respect to $\psi_{\delta}$.
        \item [Property C] The operator $\Delta- F_{\delta}^{'}(\psi_{\delta})$ is an invertible map from $H_{0}^2$ to $L^2$.
    \end{itemize}
\end{proposition}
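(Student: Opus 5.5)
The plan is to modify $F$ in two stages and then recover $\psi_\delta$ by an implicit function argument around $\psi_0$, using the invertibility of $\Delta-F'(\psi_0)$.

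\emph{Construction of $F_\delta$.} Let $c_1,\dots,c_N$ be the critical values of $\psi_0$, finitely many since $\psi_0$ is Morse on a compact domain, and set $m=\lfloor\frac k2\rfloor$. First mollify $F$ to $F^{\delta}=F*\rho_\delta$ on a neighborhood of $[\min\psi_0,\max\psi_0]$, so that $F^\delta\to F$ in $C^{m-1}$. Next, near each $c_i$ replace $F^\delta$ by its Taylor polynomial of degree $m-1$ at $c_i$. Concretely, with a cutoff $\chi_\delta(t)$ equal to $1$ on $|t-c_i|<r_\delta$ and supported in $|t-c_i|<2r_\delta$, put
\[F_\delta=\chi_\delta P_{i,\delta}+(1-\chi_\delta)F^\delta,\qquad \deg P_{i,\delta}\le m-1 .\]
Then $F_\delta^{(m)}\equiv0$ near $c_i$. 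The point to check is that $F_\delta\to F$ in $C^{m-1}$ even though $\chi_\delta$ has derivatives of size $r_\delta^{-j}$. For this, write $F^\delta-P_{i,\delta}=O(|t-c_i|^{m-1}\omega(|t-c_i|))$ in $C^0$, with corresponding bounds for its derivatives, where $\omega$ is the modulus of continuity of $F^{(m-1)}$, uniform in $\delta$. Leibniz' rule then gives $\|F_\delta-F^\delta\|_{C^{m-1}}\lesssim\omega(2r_\delta)\to0$.

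\emph{Finding $\psi_\delta$ (Property A).} Consider the map $\Phi(\psi,F)=\psi-\Delta^{-1}F(\psi)$ on $C^{m-1,\alpha}$ with zero boundary values. Its derivative at $(\psi_0,F)$ is $I-\Delta^{-1}F'(\psi_0)=\Delta^{-1}(\Delta-F'(\psi_0))$. This is invertible on Hölder spaces: by Fredholm theory, invertibility on $H^2_0$ transfers to Hölder spaces through elliptic regularity. The data $F$ is only $C^{m-1}$, so I would not differentiate in $F$. Instead I write
\[\psi_\delta=\psi_0+\phi,\qquad \phi=(\Delta-F'(\psi_0))^{-1}\bigl[F_\delta(\psi_0+\phi)-F(\psi_0)-F'(\psi_0)\phi\bigr],\]
and solve this by a contraction, or by Schauder as in the proof of Proposition \ref{prop:C1 approximation}, in a small ball of $C^{1}$. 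The right-hand side splits as $(F_\delta-F)(\psi_0+\phi)$, which is small, plus $F(\psi_0+\phi)-F(\psi_0)-F'(\psi_0)\phi=o(\|\phi\|)$. The resulting $\phi$ is small in $C^1$. Bootstrapping with Schauder estimates, using $F_\delta\to F$ in $C^{m-1}$ and $\psi_0\in C^k$, gives $\phi\to0$ in $C^{m-1}$, possibly at the cost of one derivative, which is why the statement has $\lfloor\frac k2-1\rfloor$. Since $F_\delta$ is smooth, $\psi_\delta$ is smooth by elliptic regularity.

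\emph{Properties B and C.} $C^2$-closeness preserves the Morse property, and the critical points and values of $\psi_\delta$ move only by $o(1)$. Flatness of $F_\delta^{(m)}$ must hold near the critical values of $\psi_\delta$ rather than of $\psi_0$, so I choose $r_\delta$ first and then $\delta$ small enough that the critical values of $\psi_\delta$ lie within $r_\delta/2$ of the $c_i$. This requires the smallness of $\phi$ to be controlled independently of $r_\delta$, which holds by the uniform $C^{m-1}$ estimate above with $m-1\ge2$. Property C follows from stability of invertibility: $F_\delta'(\psi_\delta)\to F'(\psi_0)$ in $L^\infty$, and invertible operators form an open set.

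The main obstacle is this ordering of parameters together with the derivative bookkeeping in the cutoff step. Choosing $r_\delta\to0$ slowly enough relative to $\delta$ should resolve it.
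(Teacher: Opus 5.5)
Most of your argument works, and it takes a different route from the paper. The paper linearly interpolates $F^{(m-2)}$ on $[t_i-\delta,t_i+\delta]$, integrates $m-2$ times and mollifies at scale $\delta^2$. This gives only $\|F_\delta-F\|_{C^{m-2}}\lesssim\delta$, together with the rate $\|F_\delta-F\|_{C^{m-3,1/2}}\lesssim\delta^{3/2}$, and the paper gets Property C by a compactness/contradiction argument. Your gluing of the degree-$(m-1)$ Taylor polynomial gives convergence in the stronger $C^{m-1}$ norm (without a rate), and your Neumann-series argument for Property C is more direct.

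The gap is in the step you flag yourself, Property B. The claim that the smallness of $\phi$ is controlled independently of $r_\delta$ is false. Fix the window $r$ and send the mollification parameter to $0$. Then $F_\delta$ tends to $\chi_r P_i+(1-\chi_r)F$, with $P_i$ the Taylor polynomial of $F$ at $c_i$, and this is in general not $F$. So $\phi$ does not tend to $0$, and part of the displacement of the critical values is determined by $r$ alone. Shrinking $\delta$ after fixing $r$, or letting $r_\delta\to0$ slowly, does not by itself address this; both only control the mollification error. The $C^{m-1}$ estimate you cite bounds that displacement only by a multiple of $\omega(2r)$, which need not be $o(r)$ (e.g.\ if $F^{(m-1)}$ is merely H\"older). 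So it cannot place the critical values of $\psi_\delta$ inside the flat window $\{|t-c_i|<r\}$.

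The missing ingredient is a low-norm bound that beats the window width; in the paper this is $\|\psi_\delta-\psi_0\|\lesssim\delta^{3/2}\ll\delta$. In your construction it comes from the $C^0$ part of your own Taylor estimate: $\|F_\delta-F^\delta\|_{L^\infty}\lesssim r^{m-1}\omega(2r)$ and $\|F^\delta-F\|_{L^\infty}\lesssim\delta$. Your fixed-point equation and the bound $\|(\Delta-F'(\psi_0))^{-1}g\|_{C^{1,\alpha}}\lesssim\|g\|_{L^\infty}$ then give $\|\phi\|_{C^1}\lesssim r^{m-1}\omega(2r)+\delta$, with constants independent of $r$.

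Let $z_i$ be a critical point of $\psi_0$ with value $c_i$ and $d_i$ the nearby critical point of $\psi_\delta$. Nondegeneracy gives $|d_i-z_i|\lesssim\|\nabla\phi\|_{L^\infty}$, hence $|\psi_\delta(d_i)-c_i|\le|\phi(d_i)|+|\psi_0(d_i)-\psi_0(z_i)|\lesssim r^{m-1}\omega(2r)+\delta$. Since $m-1\ge1$, this is $o(r)+O(\delta)$. So fixing $r$ small and then taking $\delta\ll r$ does put the critical values within $r/2$ of the $c_i$. Your ordering of parameters is right, but it has to be justified by this $L^\infty$-level estimate rather than the $C^{m-1}$ one.
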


\begin{proof}
We first give the definition of $F_{\delta}$.
Assume $t_1<t_2<\cdots<t_m$ are all the critical values of $\psi_0$, and the range of $\psi_0$ is $[a,b]$, we first define $G$ to be a global extension of  $F^{(\lfloor \frac{k}{2}-2\rfloor)}$.
\begin{align*}
    G(t):=&\left\{\begin{array}{cc}
           F^{(\lfloor \frac{k}{2}-1\rfloor)}(a)(t-a)+ F^{(\lfloor \frac{k}{2}-2\rfloor)}(a),\quad & t\leq a, \\
        F^{(\lfloor \frac{k}{2}-2\rfloor)}(t), &\quad a<t<b, \\
         F^{(\lfloor \frac{k}{2}-1\rfloor)}(b)(t-b)+ F^{(\lfloor \frac{k}{2}-2\rfloor)}(b), &\quad t\geq b.
    \end{array}\right.
\end{align*}
Correspondingly, we now can extend $F$ such that  \begin{equation}
    \frac{d^{\lfloor \frac{k}{2}-2\rfloor}F}{dt^{\lfloor \frac{k}{2}-2\rfloor}}=G,\text{for $t\geq b$, or $t\leq a$.}
\end{equation}
with the boundary condition $ \frac{d^{j}F}{dt^{j}}(a)=F^{(j)}(a)$ and $ \frac{d^{j}F}{dt^{j}}(b)=F^{(j)}(b)$, F is a now a global $C^{\lfloor\frac{k}{2}-1\rfloor}$ function.
    Now for all $\delta>0$, we define an approximation of $G$:
    \begin{align*}
    G_{\delta}^{1}(t):=&\left\{\begin{array}{cc}
           G(t),\quad & min(|t-t_{i}|)\geq \delta \\
         \frac{G(t_{i}+\delta)-G(t_{i}-\delta)}{2\delta}(t-t_i+\delta)+G(t_{i}-\delta), &\quad |t-t_i|\leq \delta
    \end{array}\right.
\end{align*}
Correspondingly, we define $F_{\delta}^{1}$ to be the unique solution to \begin{equation}
    \frac{d^{\lfloor \frac{k}{2}-2\rfloor}F_{\delta}^{1}}{dt^{\lfloor \frac{k}{2}-2\rfloor}}=G_{\delta}^{1},
\end{equation}
with the boundary condition $ \frac{d^{j}F_{\delta}^{1}}{dt^{j}}(a)=F^{(j)}(a)$, for all $0\leq j\leq \lfloor \frac{k}{2}-3\rfloor.$
Now set \(h_\delta:=F_\delta^{1}-F\). 
By construction, \(D^{\left\lfloor k/2\right\rfloor-2}h_\delta\) is supported in a union of intervals of total length \(O(\delta)\), and
\[
\|D^{\left\lfloor k/2\right\rfloor-2}h_\delta\|_{L^\infty}\lesssim \delta .
\]
Moreover, integrating once gives
\begin{equation}\label{eqn:hdifference}
\begin{aligned}
    &\|D^{\left\lfloor k/2\right\rfloor-3}h_\delta\|_{L^\infty}\lesssim \delta^2.
\end{aligned}    
\end{equation}

Therefore, by interpolation,
\[
[D^{\left\lfloor k/2\right\rfloor-3}h_\delta]_{C^{1/2}}
\lesssim
\|D^{\left\lfloor k/2\right\rfloor-3}h_\delta\|_{L^\infty}^{1/2}
\|D^{\left\lfloor k/2\right\rfloor-2}h_\delta\|_{L^\infty}^{1/2}
\lesssim \delta^{3/2}.
\]
Thus
\begin{equation}\label{eqn:close}
    \|F_\delta^{1}-F\|_{C^{\left\lfloor k/2\right\rfloor-3,1/2}[a-1,b+1]}
\lesssim \delta^{3/2}
\end{equation}

Now, let $K$ be a non-negative function from $\mathbb{R}$ to $\mathbb{R}$ with integral $1$ and compact support in $[-1,1]$. We define the rescaled function $K$, $K_{\epsilon}(t)=\frac{K(\frac{t}{\epsilon} )}{\epsilon}$ and use convolution to define an approximation of $F_{\delta}^{1}$:
 \[  F_{\delta}(t):=F_{\delta}^{1}\star K_{\delta ^{2}}(t).\]
 
We have $F_{\delta}$ is smooth, now based on \eqref{eqn:close} and \eqref{eqn:hdifference}, for small $\delta>0$, we have  
\begin{equation}\label{eqn:F approximation close}
    \|F_{\delta}-F\|_{C_{\lfloor\frac{k}{2}-3\rfloor,\frac{1}{2}}[a-1,b+1]}\lesssim \delta^{\frac{3}{2}},
\end{equation}
 \begin{equation}\label{eqn:F approximation close2}
    \|F_{\delta}-F\|_{C_{\lfloor\frac{k}{2}-2\rfloor}[a-1,b+1]}\lesssim \delta,
\end{equation}
and $F^{\lfloor \frac{k}{2}\rfloor}$ vanish in a neighborhood of the critical value of $\psi_0$ with size $\frac{1}{2}\delta$.
As $\Delta-F^{'}(\psi_0)$ is invertible from $H^2$ to $L^2$, $\psi_{\delta}=\Delta^{-1}(F_{\delta}(\psi_{\delta}))$ is then equivalent to $\psi_{\delta}=\mathcal{L}(\psi_{\delta}):=\psi_0+(\Delta-F^{'}(\psi_0))^{-1}[F(\psi_{\delta})-F(\psi_0)-F^{'}(\psi_0)(\psi_{\delta}-\psi_0)]+(\Delta-F^{'}(\psi_0))^{-1}[F_{\delta}(\psi_{\delta})-F(\psi_{\delta})]$.
Now by \eqref{eqn:F approximation close}, elliptic regularity and the explicit form of $\mathcal{L}$, for all $\|f-\psi_0\|_{C^{\lfloor \frac{k}{2}-1\rfloor}}\lesssim \delta$, we have \begin{equation}\label{eqn:Lcompactincreaseregularity}
    \|\mathcal{L}f-\psi_0\|_{C^{\lfloor \frac{k}{2}-1\rfloor,\frac{1}{2}}}\lesssim \delta^{\frac{3}{2}}+\delta \|f-\psi_0\|_{C_{\lfloor \frac{k}{2}-1\rfloor}}.
\end{equation} 
In particular, there exists a constant $C_1$, such that $\{\|f-\psi_0\|_{C^{\lfloor \frac{k}{2}-1\rfloor}}\leq C_1\delta^{\frac{3}{2}}\}$ is invariant under the mapping $\mathcal{L}$. Due to \eqref{eqn:Lcompactincreaseregularity}, $\mathcal{L}$ increases the regularity, it is a compact operator from the $C^{\lfloor \frac{k}{2}-1\rfloor}$ ball around $\psi_0$ to itself. By the Schauder fixed point theorem, there exists a $C^{\lfloor \frac{k}{2}-1\rfloor}$ solution to $\psi_{\delta}=\Delta^{-1}F_{\delta}(\psi_{\delta})$ satisfying $\|\psi_{\delta}-\psi_0\|_{C^{\lfloor \frac{k}{2}-1\rfloor}}\leq C_1\delta^{\frac{3}{2}}$.
By the elliptic regularity, $\psi_{\delta}$ is smooth. Moreover, let $\{z_1,\cdots,z_{n}\}$ be the critical points of $\psi_0$, due to the Morse condition and $\|\psi_{\delta}-\psi_0\|_{C^2}\lesssim \delta^{\frac{3}{2}}$,  the critical points of $\psi_{\delta}$ are $\{d_1,\cdots,d_{n}\}$ with $|d_{i}-z_{i}|\lesssim \delta^{\frac{3}{2}}$. This implies that those critical points are non-degenerate and $|\psi_{\delta}(d_{i})-\psi_0(z_i)|\lesssim \delta^{\frac{3}{2}}\ll \delta$. In particular, we have $F_{\delta}^{(\lfloor\frac{k}{2}\rfloor)}=0$ near the critical values of $\psi_{\delta}$.

In the end, we show if $\delta$ is sufficiently small, then $\Delta- F_{\delta}^{'}(\psi_{\delta})$ is invertible from $H_{0}^2$ to $L^2$. Assume it is not the case, then there exists $\delta_{n}\rightarrow 0$, with $\Delta- F_{\delta}^{'}(\psi_{\delta})$ being not invertible from $H_{0}^2$ to $L^2$. By the Fredholm alternative, in particular, there is $\eta_{n} \in H_{0}^2$, with $\Delta \eta_{n}=F_{\delta}^{'}(\psi_{\delta}) \eta_{n}$, $\|\eta_{n}\|_{L^2}=1$, and we have $\eta_{n}$ is uniformly bounded in $H^1$. In particular, up to a subsequence, $\eta_{n}$ converges weakly to $\eta$ in $H^1$ with $\|\eta\|_{L^2}=1$. By \eqref{eqn:F approximation close2}, as $F_{\delta_{n}}^{'}(\psi_{\delta_{n}})$ converges to $F^{'}(\psi_{0})$ in $L^{\infty}$, $\eta$ is a weak solution to $\Delta \eta=F^{'}(\psi_0)\eta$. By the elliptic regularity theory, $\eta$ is a non-trivial strong solution to $\Delta \eta=F^{'}(\psi_0)\eta$, and it contradicts the invertibility of $\Delta -F^{'}(\psi_0)$ from $H_{0}^2$ to $L^2$.
\end{proof}
Now we are able to finish the proof of Proposition \ref{thm:main}.
\begin{proof}[Proof of Proposition \ref{thm:main}]
  As $F$ is $C^{\lfloor \frac{k}{2}-1\rfloor}$, and the Schr\"odinger operator is invertible, $\psi_0$ is a Morse function,  we apply Proposition \ref{prop:F approximate flat} to construct a smooth Morse steady state $\bar{\psi}$ close to $\psi_0$ in $C^{\lfloor \frac{k}{2}-1\rfloor}$ satisfying $\Delta \bar{\psi}=\bar{F}(\bar{\psi})$ and $\bar{F}$ is approximately \textbf{$\lfloor \frac{k}{2}\rfloor$-flat} with respect to $\bar{\psi}$.
  As we argued in Proposition \ref{lemma:prop2.1}, we can find smooth steady states $\psi_{\epsilon}$ close to $\bar{\psi}$ in $C^{\lfloor\frac{k}{2}-1\rfloor}$ such that $\psi_{\epsilon}$ fails to solve a semilinear elliptic equation. Moreover, there is a $C^2$ neighborhood of $\psi_{\epsilon}$, where all steady states in this neighborhood fail to solve a semilinear elliptic equation.    
\end{proof}
Now we have finished the proof of the first part of Theorem \ref{thm: main theorem}, in the next section, we will finish the second part of Theorem \ref{thm: main theorem}.

\subsection{Dynamical stability of the steady states in Proposition \ref{thm:main}}

In this section, we discuss the dynamical stability of $\psi_{\epsilon}$ we constructed in Proposition \ref{lemma:prop2.1} under the condition that $F^{'}>0$.
First, we note that $\omega_{\epsilon}$ satisfies the Arnold’s stability criterion.
\begin{proposition}\label{prop:stablity}
    Under the setting of Proposition \ref{lemma:prop2.1}, if we in addition, assume $F^{'}>0$, let  $\psi_{\epsilon}$ be the steady states we constructed in Proposition \ref{lemma:prop2.1}. Then, $C_{\epsilon}>\frac{\nabla \Delta \psi_{\epsilon}}{\nabla \psi_{\epsilon}}>c_{\epsilon}$, for positive constants $C_{\epsilon}, c_{\epsilon}$.
\end{proposition}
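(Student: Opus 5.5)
The plan is to read off the bounds from Lemma \ref{lemma:Conclusion of the proof of Proposition}. That lemma gives
\[
\lim_{\epsilon\to0}\Big\|\frac{\nabla\Delta\psi_\epsilon}{\nabla\psi_\epsilon}-F'(\psi_0)\Big\|_{L^\infty}=0 .
\]
The ratio $\frac{\nabla\Delta\psi_\epsilon}{\nabla\psi_\epsilon}$ is understood as the scalar $M_\epsilon$ with $\nabla\Delta\psi_\epsilon=M_\epsilon\nabla\psi_\epsilon$. Since $\psi_\epsilon$ is a steady state, $\nabla\Delta\psi_\epsilon$ is parallel to $\nabla\psi_\epsilon$ wherever $\nabla\psi_\epsilon\neq0$, so $M_\epsilon$ is well defined away from critical points. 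On the neighborhood $\mathcal M$ of the critical points it is given by the explicit smooth formula from the proof of that lemma, so it extends boundedly across them.

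Step one is a uniform lower bound for $F'(\psi_0)$. The range $\psi_0(\bar\Omega)=[a,b]$ is compact and $F'$ is continuous (indeed $F\in C^\infty$ in the approximately flat setting). The hypothesis $F'>0$ is therefore equivalent on this range to $F'(\psi_0)\ge 2c_0>0$ on $\bar\Omega$ for some $c_0$. Similarly $F'(\psi_0)\le C_0/2$ for some $C_0<\infty$.

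Step two applies the $L^\infty$ convergence. Choose $\epsilon$ small enough that $\|M_\epsilon-F'(\psi_0)\|_{L^\infty}\le \min(c_0, C_0/2)$. Then $c_0\le M_\epsilon\le C_0$ everywhere, so we may take $c_\epsilon=c_0/2$ and $C_\epsilon=2C_0$. This gives strict inequalities for all sufficiently small $\epsilon$, which is the regime in which $\psi_\epsilon$ is constructed.

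The only point needing care is that the $L^\infty$ statement really holds on all of $\Omega$, including near the critical points. I would recheck this from the two cases in the proof of Lemma \ref{lemma:Conclusion of the proof of Proposition}.
\begin{itemize}
\item On $\mathcal M$, $M_\epsilon=\sum_{j\ge1}\epsilon^{j-1}F^{(j)}(\psi_0)\frac{(\psi_1+\epsilon\psi_2)^{j-1}}{(j-1)!}$. This equals $F'(\psi_0)+O(\epsilon)$ by the uniform $C^2$ bounds on $\psi_1,\psi_2$ from Lemma \ref{prop:smoothness-psi2}.
\item On $\mathcal M^c$, $|\nabla\psi_\epsilon|\gtrsim\xi$ for small $\epsilon$. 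There the remaining terms $\epsilon\nabla\eta+\epsilon^2\nabla\mathcal H_\epsilon+\dots$ in \eqref{eqn:psifullone1} are $O(\epsilon)$ in $L^\infty$, and dividing by $\nabla\psi_\epsilon$ costs only a factor $\xi^{-1}$. This uses the uniform Lipschitz/$C^1$ bound on $\mathcal H_\epsilon$ from Proposition \ref{lemma:solve transport}.
\end{itemize}
With these two checks the proposition follows directly.
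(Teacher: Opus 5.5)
Your proposal is correct and follows the paper's own argument. The paper likewise just invokes the $L^\infty$ convergence of $\frac{\nabla\Delta\psi_\epsilon}{\nabla\psi_\epsilon}$ to the positive function $F'(\psi_0)$ (Proposition~\ref{lemma:prop2.1}, via Lemma~\ref{lemma:Conclusion of the proof of Proposition}) and concludes that the ratio is positive and bounded for small $\epsilon$. Your compactness step giving $2c_0\le F'(\psi_0)\le C_0/2$, and your re-check of the two regions $\mathcal{M}$ and $\mathcal{M}^c$, only spell out details the paper leaves implicit.
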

\begin{proof}
    Due to Proposition \ref{lemma:prop2.1}, we have $\frac{\nabla\Delta \psi_{\epsilon}}{\nabla\psi_{\epsilon}}$ converges to the positive function $F^{'}(\psi_0)$ in $L^{\infty}$ as $\epsilon \rightarrow 0$, in particular, $\frac{\nabla \Delta \psi _{\epsilon}}{\nabla\psi_{\epsilon}}$ is positive for small $\epsilon$. Hence $\psi_{\epsilon}$ satisfies the Arnold's stability criterion.
\end{proof}
Once we know that the steady state $\psi_{\epsilon}$ satisfies the Arnold’s stability criterion, Theorem \ref{thm: main theorem} follows from a classical work of Arnold.
\begin{proposition}\label{prop:linear stablity} 
If the stream function of the steady state $\psi_0$ satisfies the Arnold's stability criterion, and $\psi_0$ is a Morse function, then
$\psi_0$ is linearly stable in $L^2$ of vorticity. 
\end{proposition}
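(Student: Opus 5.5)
The plan is Arnold's energy--Casimir argument, carried out at the linear level. Write $\omega_0=\Delta\psi_0$ and $M:=\frac{\nabla\omega_0}{\nabla\psi_0}$. By the Morse assumption, $M$ is a bounded function on $\bar\Omega$; near a nondegenerate critical point this follows from the straightening lemma, together with the fact that $\omega_0$ is constant on the level-set components of $\psi_0$. By the criterion \eqref{ASC} we have $0<c\le M\le C$, and moreover $\nabla^\perp\psi_0\cdot\nabla M=0$. The linearized Euler equation around $\omega_0$ is
\[
\partial_t\omega+u_0\cdot\nabla\omega+u\cdot\nabla\omega_0=0,\qquad u=\nabla^\perp\psi,\quad \Delta\psi=\omega,\quad \psi|_{\partial\Omega}=0.
\]
Using $\nabla\omega_0=M\nabla\psi_0$, rewrite the last term as $u\cdot\nabla\omega_0=M\,\nabla^\perp\psi\cdot\nabla\psi_0=-M\,u_0\cdot\nabla\psi$.

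The first step is to verify that the quadratic form
\[
B(\omega)=\int_\Omega |\nabla\psi|^2+\frac{1}{M}|\omega|^2\,dx
\]
is conserved by the linearized flow. Differentiating, the second term gives
\[
\frac{d}{dt}\int\frac{\omega^2}{M}=-2\int\frac{\omega}{M}\,u_0\cdot\nabla\omega+2\int\omega\,u_0\cdot\nabla\psi .
\]
The first integral equals $-\int u_0\cdot\nabla(\omega^2)/M$, which vanishes after integrating by parts, since $u_0$ is divergence free, tangent to the boundary, and satisfies $u_0\cdot\nabla M=0$. The first term of $B$ gives
\[
\frac{d}{dt}\int|\nabla\psi|^2=-2\int\psi\,\partial_t\omega .
\]
Substituting the equation and integrating by parts, this cancels against the remaining term $2\int\omega\,u_0\cdot\nabla\psi$. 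To see the cancellation, use $\int\psi\,u_0\cdot\nabla\omega=-\int\omega\,u_0\cdot\nabla\psi$ and $\int\psi M u_0\cdot\nabla\psi=\frac12\int Mu_0\cdot\nabla\psi^2=0$. I would first carry out these computations for smooth solutions, for which well-posedness of the linear transport equation with a compact (order $-1$) perturbation is standard, and then extend by density in $L^2$.

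Given conservation of $B$, positivity yields the stability estimate
\[
\frac1C\|\omega(t)\|_{L^2}^2\le B(\omega(t))=B(\omega(0))\le\Big(\lambda_1^{-1}+\frac1c\Big)\|\omega(0)\|_{L^2}^2,
\]
where $\lambda_1$ is the first Dirichlet eigenvalue, which controls $\|\nabla\psi\|_{L^2}\lesssim\|\omega\|_{L^2}$. This gives uniform-in-time $L^2$ bounds on the vorticity, that is, linear stability.

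The main obstacle is justifying that $M$ is bounded, and that $u_0\cdot\nabla M=0$ holds in a sense strong enough for the integration by parts, near the critical points of $\psi_0$ (for a merely Morse, not semilinear, steady state). I would handle this with a local Morse chart at each critical point. At centers, action-angle coordinates make $\omega_0$ a $C^1$ function of $\psi_0$ locally. At saddles, one uses that $\omega_0$ is constant on each of the four branches of the separatrix, together with the nondegenerate Hessian, to obtain boundedness. Since $M$ then lies in $L^\infty$, is $u_0$-invariant almost everywhere, and the critical set has measure zero, the integrations by parts remain valid.
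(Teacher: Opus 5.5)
Your proposal is correct and follows the paper's argument. Both rewrite the linearized equation using $M=\nabla\omega_0/\nabla\psi_0$ together with $u_0\cdot\nabla M=0$ (obtained at regular points from a local relation $\omega_0=G(\psi_0)$ and extended across the critical points), and then use the conservation of the coercive form $\int|\nabla\psi|^2+\int M^{-1}\omega^2$ together with \eqref{ASC} to get uniform-in-time $L^2$ bounds. Your term-by-term differentiation and well-posedness/density remark just spell out the paper's one-line identity $\partial_t\omega+u_0\cdot\nabla(\omega-M\psi)=0$. Your relative weighting of the two terms in $B$ is the consistent one: the paper's $\mathcal J$ carries a stray factor $\tfrac12$ on the energy term.
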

For completeness, we provide the proof below.
\begin{proof}
     Let $\omega_0=\Delta \psi_0$ and $u_0=\nabla^{\perp} \psi_0$, the steady state condition implies that $\nabla^{\perp} \omega_0$ is parallel to $u_0$, and we can define the scalar function $\frac{\nabla^{\perp}\omega_{0}}{  u_{0}}$.
     
     We now prove \begin{equation}\label{eqn:identity casir1}
            u_{0}\cdot \nabla(\frac{\nabla^{\perp}\omega_{0}}{  u_{0}})=0.
     \end{equation}
     First, let $x_0$ be a point such that $  u_{0}(x_0)\neq 0$, then by the inverse function theorem, there is a smooth function $G$ such that $\omega_{0}=G(\psi_{0})$ near $x_0$. As a result, we have \begin{equation}
\begin{aligned}
     &\quad   u_{0}\cdot \nabla(\frac{\nabla^{\perp}\omega_{0}}{  u_{0}})(x_0)\\&=  u_{0}(x_0)\cdot \nabla(G^{'}(\psi_0))(x_0)\\&=  u_{0}(x_0)\cdot G^{''}(\psi_0) \nabla^{\perp} \psi_{0}(x_0)=0.  
\end{aligned}
\end{equation}
    As $x_0$ is an arbitrary point with  $  u_{0}\neq 0$, we have \eqref{eqn:identity casir1} for all points where $  u_{0}\neq 0$. By Proposition \ref{lemma:prop2.1}, $  u_{0}\cdot \nabla(\frac{\nabla^{\perp}\omega_{0}  }{  u_{0}})$ is a  continuous function, and since $  u_{0}\neq 0$ holds almost everywhere, we have \eqref{eqn:identity casir1} for all points.
    Now we consider the linearized Euler equation near $\omega_{0}  $ \begin{equation}
\begin{aligned}
      &\partial_{t} \tilde{\omega}+\tilde{u}\cdot \nabla \omega_{0}+  u_{0}\cdot \nabla \tilde{\omega}=0,\\&
      \tilde{u}=\nabla^{\perp} \Delta ^{-1}\tilde{\omega}.
\end{aligned}
\end{equation}
    Denote $\tilde{\psi}=\Delta^{-1}\tilde{\omega}$, due to \eqref{eqn:identity casir1}, we have  
    \begin{equation}
    \partial_{t}\tilde{\omega} +  u_{0}\cdot \nabla (\tilde{\omega}-\frac{\nabla^{\perp}\omega}{  u_{0}}\tilde{\psi})=0.
\end{equation}
    
    We now introduce the bilinear form $\mathcal{J}(\Tilde{\omega}):=-\frac{1}{2}\int_{\Omega}\Tilde{\omega} \Delta^{-1} \Tilde{\omega} dx+ \int_{\Omega} \frac{  u_{0}}{\nabla^{\perp} \omega_{0}} \Tilde{\omega}^2dx$.  The fact that $\psi_{0}  $ satisfies Arnold's stability criterion implies that  there exists a constant $C>0$ such that \begin{equation}\label{eqn:Casimir identity equivalent}
C\|\tilde{\omega}\|_{L^2}^2\leq  \mathcal{J}(\Tilde{\omega})\leq \frac{1}{C}
\|\tilde{\omega}\|_{L^2}^2.    \end{equation}

Hence, we have \begin{equation}
    \begin{aligned}
      &\frac{d\mathcal{J}(\tilde{\omega}(t,\cdot))}{dt} \quad =2\int_{\Omega}\partial_{t} \tilde{\omega} \frac{  u_{0} }{\nabla^{\perp}\omega_{0}}(\tilde{\omega}-\frac{\nabla^{\perp}\omega_{0}  }{  u_{0}  }\tilde{\psi})dx \\& =-\int_{\Omega} \frac{  u_{0}}{\nabla^{\perp}\omega_{0}  }  u_{0} \cdot\nabla (\tilde{\omega}-\frac{\nabla^{\perp}\omega_{0}  } {  u_{0}  }\tilde{\psi})^2dx\\& \quad =\int_{\Omega}   u_{0}\cdot \nabla \frac{  u_{0}}{\nabla^{\perp}\omega_{0}  }(\tilde{\omega}-\frac{\nabla^{\perp}\omega_{0}  } {  u_{0}  }\tilde{\psi})^2dx.
    \end{aligned}
\end{equation}
 Similarly to \eqref{eqn:identity casir1}, we have $  u_{0} \cdot \nabla \frac{  u_{0} }{\nabla^{\perp}\omega_{0}  }=0$, and hence \begin{equation}
    \frac{d\mathcal{J}(\tilde{\omega}(t,\cdot))}{dt}=0.
\end{equation}
 Then, the linear stability of $\psi_0 $ follows from \eqref{eqn:Casimir identity equivalent}.
\end{proof}
In addition, using the same Casimir, the steady states $\omega_0$ are nonlinearly stable for a fairly long time. 
\begin{proposition}\label{prop:nonlinear stability}
Let $\omega_0$ be the vorticity of a steady state in Proposition \ref{prop:linear stablity}, $\omega$ be the solution to the 2D Euler equation \begin{equation}
    \begin{aligned}
        &\partial_{t}\omega+u\cdot \nabla \omega=0,\\&
        u=\nabla^{\perp}\Delta^{-1}\omega.
    \end{aligned}
\end{equation}
  If $\|\omega(0,\cdot)-\omega_{0}(\cdot)\|_{L^{\infty}}\leq 1$, for all $\alpha \in [0,1)$, there exists a positive constant $C$ such that for all 
 $t\in [0,\frac{1}{\|\omega(0,\cdot)-\omega_{0}(\cdot)\|_{L^{2}}^{\alpha}}]$,
 we have \begin{equation}
    \|\omega(t,\cdot)-\omega_{0}(\cdot)\|_{L^2}\leq C\|\omega(0,\cdot)-\omega_{0}(\cdot)\|_{L^2}.
    \end{equation} 
\end{proposition}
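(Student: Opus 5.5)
We will use the nonlinear Casimir--energy functional associated with the linearization in Proposition \ref{prop:linear stablity}, controlling its time derivative by a cubic error. Write $\omega=\omega_0+\tilde\omega$, $\tilde\psi=\Delta^{-1}\tilde\omega$, $\tilde u=\nabla^\perp\tilde\psi$, and $M:=\frac{\nabla^\perp\omega_0}{u_0}$. By the Arnold criterion $c\le M\le C$, and by \eqref{eqn:identity casir1} we have $u_0\cdot\nabla M=0$ and $u_0\cdot\nabla M^{-1}=0$. The perturbation satisfies
\[
\partial_t\tilde\omega+u_0\cdot\nabla(\tilde\omega-M\tilde\psi)+\tilde u\cdot\nabla\tilde\omega=0 .
\]
We take the same quadratic form $\mathcal J(\tilde\omega)=-\frac12\int\tilde\omega\Delta^{-1}\tilde\omega+\int M^{-1}\tilde\omega^2$, which is equivalent to $\|\tilde\omega\|_{L^2}^2$ by \eqref{eqn:Casimir identity equivalent}. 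We compute $\frac{d}{dt}\mathcal J$. The linear part vanishes exactly as in the proof of Proposition \ref{prop:linear stablity}. This leaves
\[
\frac{d}{dt}\mathcal J=-2\int M^{-1}(\tilde\omega-M\tilde\psi)\,\tilde u\cdot\nabla\tilde\omega .
\]
We integrate by parts using $\operatorname{div}\tilde u=0$ and $\tilde u\cdot n=0$ on $\partial\Omega$. The term $\int M^{-1}\tilde\omega\,\tilde u\cdot\nabla\tilde\omega=-\frac12\int\tilde\omega^2\,\tilde u\cdot\nabla M^{-1}$. The term $\int\tilde\psi\,\tilde u\cdot\nabla\tilde\omega=-\int\tilde\omega\,\tilde u\cdot\nabla\tilde\psi=0$, since $\tilde u\perp\nabla\tilde\psi$. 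Hence
\[
\Big|\frac{d}{dt}\mathcal J\Big|\lesssim \|\nabla M^{-1}\|_{L^\infty}\,\|\tilde u\|_{L^\infty}\,\|\tilde\omega\|_{L^2}^2 .
\]

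The main obstacle is the factor $\nabla M^{-1}$, which requires $M$ to be Lipschitz, and $\|\tilde u\|_{L^\infty}$, which $L^2$ control of $\tilde\omega$ does not bound. For $M$, we use that for the steady states of Proposition \ref{lemma:prop2.1} the ratio $M$ is smooth. Near critical points $M$ is the explicit smooth polynomial in $\psi_1+\epsilon\psi_2$ and $F^{(j)}(\psi_0)$ from the proof of Lemma \ref{lemma:Conclusion of the proof of Proposition}. Away from critical points it is $G'(\psi_0)$ locally with $G$ smooth. More generally, Morse plus smoothness of $\omega_0$ gives $M\in C^1$; we assume this regularity.

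For the velocity, we interpolate. Since $\|\tilde\omega\|_{L^\infty}$ is conserved up to a constant, $\|\tilde\omega(t)\|_{L^\infty}\le\|\omega(0)\|_{L^\infty}+\|\omega_0\|_{L^\infty}\lesssim 1$ by the hypothesis $\|\tilde\omega(0)\|_{L^\infty}\le1$. Biot--Savart in 2D then gives, for any $p>2$,
\[
\|\tilde u\|_{L^\infty}\lesssim\|\tilde\omega\|_{L^p}\le\|\tilde\omega\|_{L^2}^{2/p}\|\tilde\omega\|_{L^\infty}^{1-2/p}\lesssim \|\tilde\omega\|_{L^2}^{\alpha},\qquad \alpha=2/p .
\]
Any $\alpha<1$ is attainable this way.

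Setting $y(t)=\mathcal J(\tilde\omega(t))\sim\|\tilde\omega\|_{L^2}^2$, we obtain $\dot y\le K y^{1+\alpha/2}$. We run a bootstrap on the interval where $y\le 4y(0)$, with $\delta=\|\tilde\omega(0)\|_{L^2}$ and $y(0)\sim\delta^2$. On this interval $\dot y\le K'\delta^{\alpha}y$, so $y(t)\le y(0)e^{K'\delta^\alpha t}$. For $t\le\delta^{-\alpha}$ this is at most $e^{K'}y(0)$. We adjust constants, either by shrinking $\alpha$ slightly via the choice of $p$ or by absorbing $e^{K'}$ into the bootstrap threshold. This closes the bootstrap and yields $\|\tilde\omega(t)\|_{L^2}\le C\delta$ on $[0,\delta^{-\alpha}]$. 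The constant $C$ depends on $\alpha$ through the Biot--Savart constant in $L^p$.
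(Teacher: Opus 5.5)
Your proposal is essentially the paper's proof: the same energy--Casimir functional, the same cubic remainder $\int\tilde\omega^2\,\tilde u\cdot\nabla M^{-1}$ (equivalent to the paper's $\frac{\nabla\omega_0}{|\nabla\omega_0|^2}$ form since $\nabla M^{-1}\parallel\nabla\omega_0$), the same interpolation $\|\tilde u\|_{L^\infty}\lesssim\|\tilde\omega\|_{L^2}^{\alpha}$ via conservation of $\|\omega\|_{L^\infty}$, and a bootstrap. You also make explicit two points the paper leaves implicit: the Lipschitz bound on $M^{-1}$, and interpolating with some $\alpha'\in(\alpha,1)$ for small $\delta$ so that the bootstrap reaches time exactly $\delta^{-\alpha}$ (your other option, enlarging the threshold, does not close in general because $K'$ grows with the threshold). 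One normalization slip inherited from the paper: the quadratic terms cancel only for $\mathcal J=-\frac12\int\tilde\omega\Delta^{-1}\tilde\omega+\frac12\int M^{-1}\tilde\omega^2$, or twice this, which is what your formula for $\frac{d}{dt}\mathcal J$ actually corresponds to; with the coefficients as you wrote them, a leftover $\int\tilde\omega\,u_0\cdot\nabla\tilde\psi$ survives.
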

\begin{proof}
    Let $\tilde{\omega}$ be the perturbed vorticity solving \begin{equation}\label{eqn:vorticity eqn perturbed}
    \begin{aligned}
        &\partial_{t} \tilde{\omega}+u_{0}\cdot \nabla \tilde{\omega}+\tilde{u}\cdot \nabla \omega_{0}+\tilde{u}\cdot \nabla \tilde{\omega}=0,\\&
        \tilde{u}=\nabla^{\perp}\Delta^{-1}\tilde{\omega}.
    \end{aligned}
\end{equation} 
We also consider the bilinear form $\mathcal{J}(\Tilde{\omega}):=-\frac{1}{2}\int_{\Omega}\Tilde{\omega} \Delta^{-1} \Tilde{\omega} dx+ \int_{\Omega} \frac{u_{0}}{\nabla^{\perp} \omega_{0}} \Tilde{\omega}^2dx$.

Note the Arnold's stability criterion implies that the critical points of $\psi_0$ and $\omega_0$ coincide. Then, as in the proof of Proposition \ref{prop:linear stablity}, we have \begin{equation}\label{eqn:J inequality}
    \frac{d \mathcal{J}(\Tilde{\omega}(t,\cdot))}{dt}=-\int_{\Omega}(\frac{\nabla\omega_{0}}{|\nabla \omega_{0}|^2}\cdot \nabla )(\frac{u_{0}}{\nabla^{\perp}\omega_{0}})\nabla^{\perp}\omega_{0}\cdot \nabla \tilde{\psi}\frac{\tilde{\omega}^{2}}{2}.
\end{equation}
For the sake of convenience of notation, in this proof, $C$ denotes a class of positive constants independent of $\alpha$, which changes from line to line. $C_{\alpha}$ denotes a class of positive constants that depend on $\alpha$, which also changes from line to line.
By \eqref{eqn:J inequality}, we have \begin{equation}
    \frac{d \mathcal{J}(\Tilde{\omega}(t,\cdot))}{dt}\leq C \|\nabla \Tilde{\psi}\|_{L^{\infty}} \|\tilde{\omega}\|_{L^2}^2. 
\end{equation}
For all $\alpha \in (0,1)$, by the interpolation theorem in Sobolev space, and \eqref{eqn:Casimir identity equivalent},   we have \begin{equation}
    \frac{d \mathcal{J}(\Tilde{\omega}(t,\cdot))}{dt}\leq  C_{\alpha} \|\omega(t,\cdot)\|_{L^{\infty}}^{1-\alpha} \|\tilde{\omega}(t,\cdot)\|_{L^2}^{2+\alpha}. 
\end{equation}
As $\|\omega(t,\cdot)\|_{L^{\infty}}=\|\omega(0,\cdot)\|_{L^\infty}\leq \|(\omega(0,\cdot)-\omega_0(\cdot))\|_{L^\infty}+\|\omega_0(\cdot)\|_{L^\infty}$,
we have \begin{equation}\label{eqn:J functinal boothstrap}
    \frac{d \mathcal{J}(\Tilde{\omega}(t,\cdot))}{dt}\leq C_{\alpha}\mathcal{J}(\Tilde{\omega}(t,\cdot))^{1+\frac{\alpha}{2}}.
\end{equation}
Now due to \eqref{eqn:Casimir identity equivalent}, as $\mathcal{J}(\Tilde{\omega}(0,\cdot))\leq C \|\Tilde{\omega}(0,\cdot)\|_{L^2}^2$, by bootstrapping on \eqref{eqn:J functinal boothstrap}, we have for  $\|\Tilde{\omega}(0,\cdot)\|_{L^2}$ sufficiently small, $\mathcal{J}(\Tilde{\omega}(t,\cdot))\leq 2C \|\Tilde{\omega}(0,\cdot)\|_{L^2}^2$ for all 
$t\in [0,\frac{1}{\|\omega(0,\cdot)-\omega_{0}(\cdot)\|^{\alpha}_{L^{2}}}]$.
Then combined with \eqref{eqn:Casimir identity equivalent}, we have \begin{equation}
    \|\omega(t,\cdot)-\omega_{0}(\cdot)\|_{L^2}\leq C\|\omega(0,\cdot)-\omega_{0}(\cdot)\|_{L^2}.
\end{equation} 
\end{proof}  
\section{Flexibility and rigidity near the cellular flow}\label{sec:cellular}
The cellular flow is an important steady state of the 2D Euler equation on the flat torus. The long-time dynamics of 2D Euler near the cellular flow is an important open question; this has been discussed in the works \cite{lin2004nonlinear,cao2026instability,zhao2024inviscid,brue2024enhanced}. In this section, we discuss the flexibility and rigidity of steady states near the cellular flow.  One of the main purposes of this section is to show that the flexibility phenomenon of
Theorem \ref{thm: main theorem} is not an artifact of the nondegeneracy assumption. The cellular
flow
\[
    \psi_0=\sin x\sin y
\]
is the canonical degenerate example: the kernel of $\Delta+2$ (which is the corresponding Schr\"odinger operator to $\psi_0$) on $L^2(\mathbb T^2)$ is four-dimensional:
\[
\ker(\Delta+2)
=
\operatorname{span}\Bigl\{
\sin x\,\sin y,\ \sin x\,\cos y,\ \cos x\,\sin y,\ \cos x\,\cos y
\Bigr\}.
\] The kernel introduces compatibility conditions on $\psi_2$ and the argument of Section~2 cannot be
applied directly. Nevertheless, in a proper subspace of $L^{2}$, by performing a Lyapunov-Schmidt-type argument, we use the freedom of choice in the right inverse of Hamiltonian to satisfy the compatibility condition. This allows
us to construct smooth Morse steady states near \(\psi_0\) which do not satisfy
any global relation \(\Delta\psi=F(\psi)\).
\subsection{Flexibility near the cellular flow}
Our first goal is to prove Theorem \ref{thm:Flexibility near the cellular flow}. 
\begin{proposition}\label{Prop:cellular flexibility}
    Let $V$ be the class of $C^{1}$ functions that are odd in $x$ and centrally symmetric with respect to  $(\frac{\pi}{2},\frac{\pi}{2})$:
    
    \[V:=\{f|f\in C^{1},   f(x,y)=-f(-x,y),\,f(x,y)=f(\pi-x,\pi-y)\}.\] There exists a family of steady states $\psi_{\epsilon}$ in $V$, such that $\psi_{\epsilon}$ converges to $\psi_0$ in the smooth category as $\epsilon \rightarrow 0$. Moreover, $\psi_{\epsilon}$ fails to solve a semilinear elliptic equation and it is neither odd-odd nor symmetric with respect to either diagonal.
\end{proposition}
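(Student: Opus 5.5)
The plan is to redo the construction of Proposition \ref{lemma:prop2.1} inside the symmetry class $V$, using a Lyapunov--Schmidt reduction to handle the kernel of $\Delta+2$.

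\textbf{Step 1: the kernel in $V$.} Here $\psi_0=\sin x\sin y$ and $F(t)=-2t$ is linear, so $F''\equiv 0$. The first step is to record what the symmetries of $V$ do to $\ker(\Delta+2)$. Oddness in $x$ leaves only $\sin x\sin y$ and $\sin x\cos y$. The central symmetry $(x,y)\mapsto(\pi-x,\pi-y)$ fixes $\sin x\sin y$ and sends $\sin x\cos y$ to $-\sin x\cos y$. Hence $\ker(\Delta+2)\cap V=\operatorname{span}\{\sin x\sin y\}$ is one-dimensional. On $V\cap\{\sin x\sin y\}^{\perp}$, the operator $A=\Delta+2$ is invertible with the usual Schauder estimates. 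So the only compatibility condition is orthogonality of the right-hand side to $\sin x\sin y$.

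\textbf{Step 2: the first-order perturbation.} On the torus, $\psi_0$ has four cells. Oddness in $x$ pairs the positive cell $Q_+=[0,\pi]^2$ with the negative cell $[-\pi,0]\times[0,\pi]$. The central symmetry maps each of the cells $Q_+$ and $[0,\pi]\times[-\pi,0]$ to itself.

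Fix a regular level $c\in(0,1)$ and a smooth $G$ supported near $c$. Define
\[
\eta=G(\psi_0)\ \text{on } Q_+,\qquad \eta=-G(-\psi_0)\ \text{on } [-\pi,0]\times[0,\pi],
\]
and $\eta=0$ in the other two cells. Then $\eta\in V$, $\eta$ is constant on streamlines, and its support is far from the separatrix $\{\sin x\sin y=0\}$, which contains all the saddles.

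Choose $G$ sign-changing so that $\int G(\psi_0)\sin x\sin y=0$. Then $\psi_1:=A^{-1}\eta\in V$ is well defined, and it satisfies the $O(\epsilon)$ equation because $\nabla^\perp\psi_0\cdot\nabla\eta=0$.

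\textbf{Step 3: the second-order equation.} For $\psi_\epsilon=\psi_0+\epsilon\psi_1+\epsilon^2\psi_2$, the computation of Proposition \ref{prop:g qualitative} with $F''=0$ gives
\[
SA\psi_2=g,\qquad g=-(\nabla^\perp\psi_1+\epsilon\nabla^\perp\psi_2)\cdot\nabla\eta,
\]
and $g$ lies in the range of $S=\nabla^\perp\psi_\epsilon\cdot\nabla$. Moreover $g$ is supported in $H^{-1}(H(\operatorname{supp}\eta))$ for $H=\psi_\epsilon$, away from the critical set. Thus Proposition \ref{lemma:solve transport} yields $\mathcal L_{\psi_\epsilon}^{-1}g$ with the bounds used before.

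Since $\psi_\epsilon\in V$, the right inverse $\mathcal L^{-1}_{\psi_\epsilon}$ (mean-free on streamlines) commutes with the symmetries. The $x$-reflection reverses orientation but also flips the sign of $\psi$, so the parity bookkeeping preserves $V$; I will check this explicitly.

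\textbf{Step 4: the compatibility condition (main obstacle).} The hard step is that $\mathcal L^{-1}_{\psi_\epsilon}g$ need not be orthogonal to $\sin x\sin y$. I would use the freedom in the right inverse: add $\mu\,h(\psi_\epsilon)\chi$, which lies in $\ker S$. Here $h\ge0$ is supported near a second regular level, $\chi$ localizes to $Q_+$ and its odd image (so the term lies in $V$), and $\int h(\psi_0)\sin x\sin y\neq0$.

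The scalar $\mu=\mu(\psi_2,\epsilon)$ is fixed by requiring the projection onto $\sin x\sin y$ to vanish. It is bounded and depends continuously on $\psi_2$, because the denominator stays near $\int h(\psi_0)\chi\sin x\sin y\neq0$. Then
\[
\mathcal T\psi_2:=A^{-1}\bigl[\mathcal L^{-1}_{\psi_\epsilon}g+\mu\,h(\psi_\epsilon)\chi\bigr],
\]
taken in $V\cap\{\sin x\sin y\}^\perp$, maps a $C^1$-ball into $C^{1,1/2}$. The Schauder fixed point argument of Proposition \ref{prop:C1 approximation} then gives $\psi_2$, and the bootstrap of Lemma \ref{prop:smoothness-psi2} gives smoothness with uniform $C^n$ bounds. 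Consequently $\psi_\epsilon\to\psi_0$ smoothly. Since $\psi_0$ is Morse on $\mathbb T^2$, $C^2$-closeness makes $\psi_\epsilon$ Morse.

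\textbf{Step 5: failure of the semilinear relation and of the symmetries.} As in Lemma \ref{lemma:Conclusion of the proof of Proposition}, compare the level $\{\psi_\epsilon=c\}$ in $Q_+$ with the corresponding component in the other positive cell $[-\pi,0]^2$, where $\eta=0$. The values of $\Delta\psi_\epsilon$ on these two components differ by $\epsilon G(c)+O(\epsilon^2)\neq0$, so no $F_\epsilon$ exists. The same $O(\epsilon)$ asymmetry between cells rules out odd--odd symmetry, which would force $\eta$-type data on $[-\pi,0]^2$ as well. It also rules out symmetry across either diagonal, since the diagonal reflections swap $Q_+$ with the other cells (or fix $Q_+$ while moving its odd partner) in a way inconsistent with $\eta$.
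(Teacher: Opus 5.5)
Your proposal is correct and follows the paper's proof: you work in $V$ so that $\sin x\sin y$ is the only kernel direction of $\Delta+2$, take $\eta$ as the $x$-odd extension of $G(\psi_0)$ with $\int_{(0,\pi)^2}G(\psi_0)\sin x\sin y=0$, and restore orthogonality to $\sin x\sin y$ by adding an element of $\ker(\nabla^\perp\psi_\epsilon\cdot\nabla)\cap V$ to $\mathcal L^{-1}_{\psi_\epsilon}g$ before inverting $\Delta+2$; you then close with Schauder and compare $\Delta\psi_\epsilon$ on the level sets in $(0,\pi)^2$ and $(-\pi,0)^2$. The only difference is the kernel element: the paper simply uses $A\psi_\epsilon$, whose pairing with $\sin x\sin y$ stays near $\pi^2$, instead of your localized $h(\psi_\epsilon)\chi$, which must be defined as $-h(-\psi_\epsilon)$ on the negative cell $[-\pi,0]\times[0,\pi]$ to actually be odd in $x$.
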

\begin{proof}
    To construct Morse steady states $\psi_{\epsilon}$ near $\psi_0$ that break the semilinear elliptic equation structure, we seek $\psi_{\epsilon}:=\psi_0+\epsilon\psi_1+\epsilon^2\psi_2$.
Substituting this ansatz into the steady Euler equation, we impose the
first-order condition
\begin{equation}\label{eq:psi1-constraint}
\nabla^\perp \psi_0\cdot \nabla\bigl( (\Delta+2)\psi_1 \bigr)=0.
\end{equation}
Then the full steady equation reduces to the following equation for the
second-order correction
\begin{equation}\label{eq:psi2-equation}
\nabla^\perp\psi_{\epsilon}\cdot \nabla\bigl( (\Delta+2)\psi_2 \bigr)
=g:=
-(\nabla^\perp\psi_1+\epsilon\nabla^\perp\psi_2)\cdot \nabla\bigl( (\Delta+2)\psi_1 \bigr).
\end{equation}
    We first construct $\psi_1$, which is the major term in breaking the semilinear elliptic equation structure. 
\subsection*{Construction of $\psi_1$}
First, let  $G:\overline{\mathbb{R}^{+}}\rightarrow \mathbb{R}$ be a non-trivial smooth function with  compact support in $[\frac{1}{3},\frac{2}{3}]$ with the requirement that  \begin{equation}
    \int_{(0,\pi)^2} G(\psi_0)\sin{x}\sin{y}dxdy=0.
\end{equation}

Now we define  $\eta$ such that,  
$\eta := G(\psi_0)$ on $(0,\pi)^2$, $\eta:=-\eta(-x,y)$  on $(-\pi,0)\times (0,\pi)$  and  $\eta:=0$ in the remaining region.
  It is clear that $\eta$ is a smooth function in $V$. Due to the odd symmetry in $x$, $\eta$ is orthogonal to $\cos{x}\sin{y}$ and $\cos{x}\cos{y}$. As $\eta$ is centrally symmetric with respect to $(\frac{\pi}{2},\frac{\pi}{2})$, we have $\eta$ is orthogonal to $\sin{x}\cos{y}$.
  Moreover, we have \[\int_{\mathbb{T}^{2}}\eta \sin{x}\sin{y}dxdy=2\int_{(0,\pi)^2}G(\psi_0)\sin{x}\sin{y}dxdy=0,\]
  and thus $\eta$ is orthogonal to $\sin{x}\sin{y}$. 
  Now we can correspondingly define $\psi_1:=(\Delta+2)^{-1}\eta$ and $\psi_1$ lies in $V$. 
We now define $\psi_2$.
\subsection*{Construction of $\psi_2$}
Assume that $\|\psi_2\|_{V}\ll \frac{1}{\sqrt{\epsilon}}$, for sufficiently small $\epsilon$, the support of $\eta$ is away from the critical level set of $\psi_{\epsilon}$. 
We define the nonlinear operator 
 \begin{equation}\label{eqn:def phiepsilon}
     \begin{aligned}
         \Phi_\epsilon(\psi_2)
:=
L_{\psi_\epsilon}^{-1}\mathcal{R}_{\epsilon},
     \end{aligned}
 \end{equation}
where 
\begin{equation}\label{eqn:def reminderepsilon}
R_\epsilon
:=
-(\nabla^{\perp}\psi_1+\epsilon \nabla^{\perp}\psi_2)\cdot\nabla\eta.
\end{equation}
 We now show  $\Phi_{\epsilon}(\psi_2)$ is  odd in $x$ and centrally symmetric with respect to $(\frac{\pi}{2},\frac{\pi}{2})$.  By \eqref{eqn:defH-1}, as the support of $\mathcal{R}_{\epsilon}$ is away from the critical level set of $\psi_{\epsilon}$, $\mathcal{L}^{-1}_{\psi_{\epsilon}}\mathcal{R}_{\epsilon}=0$ near the critical level set of $\psi_{\epsilon}$. It suffices to consider the points away from the critical level set of $\psi_{\epsilon}$.
For all $x\in \mathbb{T}^2$, away from the critical level set of $\psi_{\epsilon}$, let $\tilde{\Gamma}(x,t)$ to be the rescaled Hamiltonian flow defined in \eqref{eqn:rescaled H flow} with the Hamiltonian $\psi_{\epsilon}$.
 $\Tilde {\Gamma}(x,\cdot):[0,1]\rightarrow \mathbb{T}^2$ gives a parametrization for the level set of $\psi_{\epsilon}$ crossing $x$. As $\psi_{\epsilon}$ is centrally symmetric with respect to $(\frac{\pi}{2},\frac{\pi}{2})$, $\Tilde {\Gamma}(x,s)$  and $\Tilde {\Gamma}(x,s+\frac{1}{2})$ are centrally symmetric with respect to $(\frac{\pi}{2},\frac{\pi}{2})$.
 Moreover, as $\mathcal{R}_{\epsilon}$ is centrally symmetric with respect to $(\frac{\pi}{2},\frac{\pi}{2})$, we have $\mathcal{R}_{\epsilon}(\tilde{\Gamma}(x,s))=\mathcal{R}_{\epsilon}(\tilde{\Gamma}(x,s+\frac{1}{2}))$, by the formula \eqref{eqn:defH-1} and \eqref{eqn:defM}, 
 \begin{equation}
     \mathcal{L}_{\psi_{\epsilon}}^{-1}\mathcal{R}_{\epsilon}(\tilde{\Gamma}(x,0))= \mathcal{L}_{\psi_{\epsilon}}^{-1}\mathcal{R}_{\epsilon}(\tilde{\Gamma}(x,\frac{1}{2})).
 \end{equation}
Now as $x$ is arbitrary and $\tilde{\Gamma}(x,\frac{1}{2})$ is centrally symmetric to $x$ with respect to $(\frac{\pi}{2},\frac{\pi}{2})$, we now have $\mathcal{L}_{\psi_{\epsilon}}^{-1}\mathcal{R}_{\epsilon}$ is  centrally symmetric with respect to $(\frac{\pi}{2},\frac{\pi}{2})$.
By similar arguments as above, $\mathcal{L}_{\psi_{\epsilon}}^{-1}\mathcal{R}_{\epsilon}$ is odd in $x$, and hence $\Psi_{\epsilon}$ preserves the odd symmetry in $x$ and central symmetry with respect to $(\frac{\pi}{2},\frac{\pi}{2})$.
Now we define a parameter $A$ depending on $\psi_2$, with \begin{equation}
    A=-\dfrac{\int_{\mathbb{T}^2}\Phi_{\epsilon}(\psi_{2})\sin{x}\sin{y}dxdy}{\int_{\mathbb{T}^2}\psi_\epsilon \sin{x}\sin{y}dxdy}.
\end{equation}
$A$ is the parameter such that $\Phi_{\epsilon}(\psi_2)+A\psi_{\epsilon}$ is orthogonal to $\sin{x}\sin{y}$.
Now the mapping $\Psi_{\epsilon}(\psi_2):=(\Delta+2)^{-1}(\Phi_{\epsilon}(\psi_2)+A\psi_{\epsilon})$ is well-defined and also preserves the odd symmetry in $x$ and central symmetry with respect to $(\frac{\pi}{2},\frac{\pi}{2})$.
 We now seek the solution to \eqref{eq:psi2-equation} that solves \begin{equation}\label{eqn:psi2-equation}
    \psi_{2}=\Psi_{\epsilon}(\psi_2).
\end{equation}

 By elliptic regularity and Proposition \ref{lemma:solve transport},  for $|\psi_2|\ll \frac{1}{\sqrt{\epsilon}}$ and sufficiently small $\epsilon$, similarly as in the proof of Proposition \ref{lemma:prop2.1},
we have \[|\Psi_{\epsilon }(\psi_2)|_{C^{1,\frac{1}{2}}(\mathbb{T}^2)}\lesssim_{\psi_1}1+\epsilon \|\psi_2\|_{C^{1}}.\]
In particular,  for small $\epsilon$, there is a $C$, such that the set $K:=\{f|f\in V, \|f\|_{C^{1}}\leq C\}$ is an invariant set for the map $\Psi_{\epsilon}$. Now as the map $\Psi_{\epsilon}$ is a compact continuous map from $K$ to $K$, by the Schauder fixed point theorem, there is a fixed point $\psi_2$ in $K$.\\ The steady state $\psi_{\epsilon}=\psi_0+\epsilon\psi_1+\epsilon^2\psi_2$ will then be a steady state in $V$ close to $\psi_0$ in $C^{1}$, similarly as in Lemma \ref{prop:smoothness-psi2}, we have uniform bounds on $\psi_2$ in the smooth category and hence we have $\psi_{\epsilon}$ converges to $\psi_0$ in the smooth category as $\epsilon \rightarrow 0$.  Now we finish the proof by proving the various topological properties of $\psi_{\epsilon}$.
\subsection*{The topological behavior of $\psi_{\epsilon}$.}
First, we show $\psi_{\epsilon}$ does not satisfy any global semilinear elliptic equation $\Delta\psi_{\epsilon}=F(\psi_{\epsilon})$ on
$\mathbb T^2$ for any function $F$. 
Let  \(t_0\in (1/3,2/3)\) such that \(G(t_0)\neq 0\), and choose
\(p_+^0\in (0,\pi)^2\) with
\[
\psi_0(p_+^0)=t_0.
\]
Let \(p_-^0:=-p_+^0\in (-\pi,0)^2\). Then
\[
\psi_0(p_-^0)=\psi_0(p_+^0)=t_0,
\]
while
\[
\eta(p_+^0)=G(t_0)\neq 0,
\qquad
\eta(p_-^0)=0.
\]
Since \(\psi_\epsilon=\psi_0+O(\epsilon)\) in \(C^1\), the implicit function theorem implies that, for \(\epsilon\) sufficiently small, there exist points
\(p_+\) near \(p_+^0\) and \(p_-\) near \(p_-^0\) such that
\[
\psi_\epsilon(p_+)=\psi_\epsilon(p_-)=t_0.
\]
On the other hand,
\[
\Delta\psi_\epsilon
=
-2\psi_\epsilon+\epsilon\eta+O(\epsilon^2),
\]
and therefore
\[
\Delta\psi_\epsilon(p_+)-\Delta\psi_\epsilon(p_-)
=
\epsilon\bigl(\eta(p_+)-\eta(p_-)\bigr)+O(\epsilon^2)
=
\epsilon G(t_0)+O(\epsilon^2)\neq 0.
\]
Thus \(\Delta\psi_\epsilon\) cannot be a single-valued function of \(\psi_\epsilon\).
Similarly, by comparing $\psi_{\epsilon}$ in $(0,\pi)^2$ and $(0,\pi)\times (-\pi,0)$, $\psi_{\epsilon}$ is not odd-odd, by comparing $\psi_{\epsilon}$ in $(-\pi,0)\times (0,\pi)$ and $(0,\pi)\times (-\pi,0)$, it is not even symmetric with respect to $\{y=x\}$. Similarly, by comparing $\psi_{\epsilon}$ in $(0,\pi)\times (0,\pi)$ and $(-\pi,0)\times (-\pi,0)$, it is not even symmetric with respect to $\{y=-x\}$.
\end{proof}
The preceding construction uses a specific choice of elements in the kernel to break the semilinear elliptic structure.  We note, by contrast, in the odd-odd class, when  these directions are absent, the semilinear elliptic structure is preserved. In the next section, we discuss the rigidity of steady states near the cellular flow and complete the proof of Theorem \ref{thm:Flexibility near the cellular flow}.
\subsection{Rigidity in the odd-odd class}
In this section, we show that in the odd-odd symmetric class, if the steady state is close to the cellular flow, then the stream function solves a semilinear elliptic equation and is even symmetric to both diagonals.
\begin{proposition}\label{prop:celluar symmetry}
Let $\psi_{\epsilon}$ be the stream function of an odd--odd symmetric steady state on $\mathbb T^2$.
Let $\kappa\in(0,1)$, if
\[
\|\psi_{\epsilon}-\psi_0\|_{H^{3+\kappa}(\mathbb T^2)}\leq \epsilon,
\]
then  for sufficiently small $\epsilon$, $\psi_{\epsilon}$ is even symmetric with respect to both diagonals $\{y=x\}$ and $\{y=-x\}$ of
$[-\pi,\pi]^2$. Moreover, there exists a function $F_{\epsilon}$ such that $\psi_{\epsilon}$ is a solution to $\Delta \psi_{\epsilon}=F_{\epsilon}(\psi_{\epsilon})$ in $\mathbb{T}^2$.
\end{proposition}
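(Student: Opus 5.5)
The plan is to use the odd--odd symmetry to reduce everything to one cell $Q:=(0,\pi)^2$. Odd--odd symmetry together with $2\pi$-periodicity forces $\psi_\epsilon$ to vanish on the lines $x\in\{0,\pi\}$ and $y\in\{0,\pi\}$. On $Q$ I would then prove that $\psi_\epsilon$ has exactly the level-set topology of $\psi_0$, build $F_\epsilon$ cell by cell and glue it by symmetry, and finally obtain the diagonal symmetries from a reflection argument driven by the spectral gap of $\Delta+2$ in the odd--odd class.

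\emph{Step 1: level-set topology.} Since $H^{3+\kappa}(\mathbb T^2)\hookrightarrow C^{2,\kappa'}$, the hypothesis gives $\|\psi_\epsilon-\psi_0\|_{C^2}\lesssim\epsilon$. Oddness lets us write $\psi_\epsilon=\sin x\sin y\,h_\epsilon$ with $h_\epsilon$ close to $1$ uniformly, and this includes the corners. Hence $\psi_\epsilon>0$ in $Q$ and $\nabla\psi_\epsilon\neq0$ on $\overline Q$ away from the corners and away from a small neighbourhood of $(\frac\pi2,\frac\pi2)$. Since $\psi_0$ has a nondegenerate maximum at $(\frac\pi2,\frac\pi2)$, $C^2$-closeness gives a unique nondegenerate maximum of $\psi_\epsilon$ in $Q$. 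Consequently every level set $\{\psi_\epsilon=s\}\cap Q$ with $0<s<\max\psi_\epsilon$ is a single closed regular curve.

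\emph{Step 2: construction of $F_\epsilon$.} The steady equation says $\omega_\epsilon=\Delta\psi_\epsilon$ is constant along each such curve, so $\omega_\epsilon=F_+(\psi_\epsilon)$ on $Q$ for a single function $F_+$ on $(0,\max\psi_\epsilon]$. The map $(x,y)\mapsto(-x,-y)$ preserves $\psi_\epsilon$ and $\omega_\epsilon$ and sends $Q$ to $(-\pi,0)^2$, so the same $F_+$ works there; the maxima agree, hence so do the ranges. The reflection $x\mapsto -x$ changes the signs of both $\psi_\epsilon$ and $\omega_\epsilon$, so on the two negative cells we get $\omega_\epsilon=-F_+(-\psi_\epsilon)$. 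On the lines $\{\psi_\epsilon=0\}$ we have $\omega_\epsilon=0$ by oddness. Therefore $F_\epsilon(s):=F_+(s)$ for $s>0$, $F_\epsilon(0):=0$, $F_\epsilon(s):=-F_+(-s)$ for $s<0$ is a single odd function with $\Delta\psi_\epsilon=F_\epsilon(\psi_\epsilon)$ on $\mathbb T^2$. This proves the second assertion.

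\emph{Step 3: diagonal symmetry.} Set $\tilde\psi(x,y):=\psi_\epsilon(y,x)$. This is again odd--odd, $\epsilon$-close to $\psi_0$ (which is diagonal-symmetric), and solves the same equation $\Delta\tilde\psi=F_\epsilon(\tilde\psi)$. Write $G(s):=F_\epsilon(s)+2s$, so that $G(\psi_\epsilon)=(\Delta+2)(\psi_\epsilon-\psi_0)$ is $O(\epsilon)$ in $H^{1+\kappa}$. The difference $w:=\psi_\epsilon-\tilde\psi$ is odd--odd, antisymmetric across $\{y=x\}$, and satisfies
\[
(\Delta+2)w=G(\psi_\epsilon)-G(\tilde\psi).
\]
In the odd--odd class the kernel of $\Delta+2$ is spanned by $\sin x\sin y$, which is symmetric across the diagonal, so $w\perp\ker(\Delta+2)$. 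On this subspace the next eigenvalue of $-\Delta$ is $5$, giving $\|w\|_{L^2}\le\frac13\|(\Delta+2)w\|_{L^2}$. If $G$ is Lipschitz with constant $\delta(\epsilon)\to0$ on the range of $\psi_\epsilon$, then $\|(\Delta+2)w\|_{L^2}\le\delta\|w\|_{L^2}$, which forces $w=0$. The antidiagonal $\{y=-x\}$ is handled identically.

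The main obstacle is this small Lipschitz bound on $G$, since $\omega_\epsilon\in H^{1+\kappa}$ alone only gives $\nabla\omega_\epsilon\in L^p$ for finite $p$.
\begin{itemize}
\item Away from the maximum and the corners, $|\nabla\psi_\epsilon|\ge c$, and I would use $G'(\psi_\epsilon)=\nabla r\cdot\nabla\psi_\epsilon/|\nabla\psi_\epsilon|^2$ with $r:=(\Delta+2)(\psi_\epsilon-\psi_0)$. I would average this over level curves via the coarea formula, or integrate it along gradient lines, to control $G'$ in $L^p$ of $s$. I would then bootstrap: $F_\epsilon\in C^{0,\kappa}$ gives $\psi_\epsilon\in C^{2,\kappa}$ and hence better $\omega_\epsilon$, in the spirit of the regularity argument of Lemma~\ref{prop:smoothness-psi2}.
\item Near the nondegenerate maximum I would use Morse coordinates $\psi_\epsilon=s_*-|z|^2$, in which $G(s_*-|z|^2)=r$ is a radial function of the $H^{1+\kappa}$ datum $r$.
\item Near $s=0$, oddness of $G$ together with $|\partial_n\psi_\epsilon|\ge c$ on the edges should suffice.
\end{itemize}
If a uniform small Lipschitz bound turns out to be out of reach, my fallback is to run the same argument with $G$ only Hölder. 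For this I would split $(\Delta+2)w$ into a part controlled by $\epsilon^{\kappa}|w|^{\kappa}$ and close with an $L^\infty$ estimate on $w$ instead of the $L^2$ one. Alternatively I would apply the Berestycki--Nirenberg moving-plane method on the square $Q$, which is convex in the direction $(1,-1)$ and symmetric about the diagonal, once $F_\epsilon$ is known to be Lipschitz.
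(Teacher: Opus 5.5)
Your Steps 1 and 2 are correct. They produce $F_\epsilon$ without using the diagonal symmetry at all, and the paper's closing argument is the same topological one. The spectral-gap computation in Step 3 is also correct as a conditional statement: if $G$ were Lipschitz with constant below $3$, then $w=0$.

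\textbf{The gap.} The Lipschitz bound on $G$ is not merely hard to obtain; it is false in general under the hypothesis, so Step 3 cannot be completed along these lines.
\begin{itemize}
\item The hypothesis only controls $\nabla\omega_\epsilon$ in $L^{2/(1-\kappa)}$. Near a regular value, $F_\epsilon$ is therefore at best $H^{1+\kappa}_{\mathrm{loc}}$ in $s$.
\item There is no bootstrap. Near regular values $\omega_\epsilon=F_\epsilon(\psi_\epsilon)$ is exactly as regular as $F_\epsilon$. The gain in Lemma~\ref{prop:smoothness-psi2} came from the smooth data $\eta,\psi_0,\psi_1$ that generated that particular solution, and an arbitrary steady state has no such data.
\item For $\kappa<\frac12$ there is a concrete counterexample. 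Take $h$ odd and smooth except $h(s)=|s-\frac12|^{\gamma}$ near $s=\frac12$, with $\frac12+\kappa<\gamma<1$. A Lyapunov--Schmidt argument in the odd--odd, diagonal-even class, with a shift $\epsilon\mu$ absorbing the kernel direction $\sin x\sin y$, produces solutions of $\Delta\psi=-(2+\epsilon\mu)\psi+\epsilon h(\psi)$. These are steady, satisfy the hypothesis, and have $G=-\epsilon\mu s+\epsilon h$ non-Lipschitz at the regular value $\frac12$.
\item Near the maximum the problem occurs for every $\kappa$. Radial $H^{1+\kappa}$ data such as $|z|^{\gamma}$ with $\kappa<\gamma<2$ turn your Morse-coordinate formula into $G(s)\sim(s_*-s)^{\gamma/2}$, which is not Lipschitz.
\item Your fallbacks do not help. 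A bound $\|w\|\le C\epsilon\|w\|^{\beta}$ with $\beta<1$ only yields $\|w\|\lesssim\epsilon^{1/(1-\beta)}$, not $w=0$. Berestycki--Nirenberg moving planes also needs $F_\epsilon$ Lipschitz, or Lipschitz plus monotone.
\end{itemize}

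\textbf{The missing idea.} Run the uniqueness argument on the Euler equation in transport form rather than on the semilinear equation.
\begin{itemize}
\item Your $w$ is exactly $2\psi_{oo}$, where $\psi_\epsilon-\psi_0=\psi_{ee}+\psi_{oo}$ is the splitting into parts even and odd with respect to the diagonals.
\item Since $\nabla^\perp f\cdot\nabla g$ changes sign under the orientation-reversing reflection, the diagonal-even part of $\nabla^\perp\psi_\epsilon\cdot\nabla\Delta\psi_\epsilon=0$ consists only of the cross terms. This is the linear homogeneous equation \eqref{eq:oo-eqn} for $\psi_{oo}$.
\item Its right-hand side has coefficients $\nabla\psi_{ee}\in L^\infty$ and $\nabla\Delta\psi_{ee}\in L^{2/(1-\kappa)}$, both $O(\epsilon)$ by the hypothesis. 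Heuristically, $\nabla\Delta\psi_{ee}$ plays the role of $G'(\psi_\epsilon)\nabla\psi_\epsilon$, so only an integrability bound on $G'$ is needed, not $\sup|G'|$.
\item The price is inverting the degenerate operator $\nabla^\perp(\psi_0+\psi_{ee})\cdot\nabla$ on $(\Delta+2)\psi_{oo}$. The paper uses a Poincar\'e inequality on each level curve, where diagonal-oddness forces zero mean. This gives \eqref{eqn:Hardy1} with weight $T^{-2}$.
\item The period $T$ has only a logarithmic singularity near the separatrix, so $T\in L^p$ for all $p<\infty$. H\"older and the $W^{2,q}$ estimate for $(\Delta+2)^{-1}$ then give the linear bound $\|\psi_{oo}\|_{W^{1,p}}\le C\epsilon\|\psi_{oo}\|_{W^{1,p}}$, which forces $\psi_{oo}=0$.
\end{itemize}
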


\medskip
To prove Proposition~\ref{prop:celluar symmetry}, we need the following estimate for the period
function of the Hamiltonian flow.

\begin{proposition}[Period estimates near the cellular flow]\label{prop: Hamiltonian celluar period}
Let $T_{\psi_0+\hat{\psi}}(x,y)$ be the period of the Hamiltonian trajectory of the flow
\begin{equation}\label{eq:Ham-flow}
\begin{aligned}
\frac{dX}{dt} &= \nabla^\perp(\psi_0+\hat{\psi})(X),\\
X(0) &= (x,y).
\end{aligned}
\end{equation}
There exists a universal constant $C>0$ and a constant $\delta_0>0$ such that the following holds.
If  $\hat{\psi}$ is odd--odd symmetric and even symmetric with respect to both diagonals, and satisfies
\[
\|\hat{\psi}\|_{H^{3+\kappa}(\mathbb T^2)}\le \delta_0,
\]
then
\begin{equation}\label{eqn:Hardy1}
\int_{\mathbb T^2}\frac{f^2}{T_{\psi_0+\hat{\psi}}^2}\,dx\,dy
\le C \int_{\mathbb T^2}\bigl|\nabla^\perp(\psi_0+\hat{\psi})\cdot \nabla f\bigr|^2\,dx\,dy,
\end{equation}
for every odd--odd function $f$ that is odd symmetric with respect to $\{y=x\}$.
Moreover, for every $p\in(1,\infty)$ there exists $C_p>0$ (depending only on $p$) such that
\begin{equation}\label{eqn:period Holder}
\|T_{\psi_0+\hat{\psi}}\|_{L^p(\mathbb T^2)} \le C_p.
\end{equation}
\end{proposition}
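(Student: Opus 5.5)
The plan is to work cell by cell in action--angle coordinates for $H:=\psi_0+\hat\psi$. The Hardy inequality \eqref{eqn:Hardy1} then reduces to a one-dimensional Wirtinger inequality on each closed orbit, and the $L^p$ bound \eqref{eqn:period Holder} reduces to a logarithmic bound on the period near the separatrices.

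\emph{Step 1: structure of the level sets.} Since $\hat\psi$ is odd--odd, it vanishes on the lines $\{x=0\},\{x=\pm\pi\},\{y=0\},\{y=\pm\pi\}$. Hence $H$ vanishes on the same grid as $\psi_0$, and these lines remain separatrices. In two dimensions $H^{3+\kappa}\subset C^{2,\kappa'}$, so for $\delta_0$ small $H$ is $C^2$-close to $\psi_0$. Consequently:
\begin{itemize}
\item each open cell such as $(0,\pi)^2$ contains exactly one nondegenerate extremum;
\item the corners of the grid remain nondegenerate saddles, with Hessians close to those of $\sin x\sin y$;
\item every other level set inside a cell is a single closed regular orbit;
\item the only critical level set is $\{H=0\}$ together with the extrema, which is a null set.
\end{itemize}
By the odd--odd symmetry and the diagonal symmetries, it suffices to treat the cell $(0,\pi)^2$ with $h=H\in(0,h_{\max})$. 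Inside this cell, let $t$ be the time along the Hamiltonian flow. The map $(h,t)\mapsto X$ is area preserving, $dx\,dy=dh\,dt$, with $t\in[0,T(h))$.

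\emph{Step 2: period estimate.} I aim to show $T(h)\le C(1+|\log h|)$ uniformly in $\hat\psi$ with $\|\hat\psi\|_{H^{3+\kappa}}\le\delta_0$.
\begin{itemize}
\item Away from the four saddle corners, $|\nabla H|\gtrsim 1$ on orbits with $h\le h_*$. The transit time through such regions is therefore $O(1)$.
\item Near a corner, use the Morse lemma with $C^1$ control of the coordinate change (available from $C^{2,\kappa'}$ closeness) to write $H=\xi\zeta$ in new coordinates $(\xi,\zeta)$. The passage time of an orbit at level $h$ through a fixed box is then $\int d\xi/\xi$ over $[h,c]$, up to bounded factors, which is $O(|\log h|)$.
\item Near the extremum, $T$ is close to $2\pi/\sqrt{\det D^2H}$, which is bounded.
\end{itemize}
Since the measure of $\{0<H<h\}$ is $O(h)$ (plus a logarithmic correction), we get $\int T^p\,dx\,dy\le C\int_0^{h_{\max}}(1+|\log h|)^{p+1}dh<\infty$. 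This gives \eqref{eqn:period Holder}. I expect this step, specifically the uniformity of the saddle normal form under $H^{3+\kappa}$ perturbations, to be the main technical obstacle. I would handle it with a quantitative Morse lemma, or by comparing the flow directly with the linearized hyperbolic flow using the $C^{1,\kappa'}$ closeness of $\nabla H$ to $\nabla\psi_0$.

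\emph{Step 3: Hardy inequality.} First I show that $f$ vanishes somewhere on every orbit. Fix an orbit $\gamma$ at level $h$ in $(0,\pi)^2$. Since $H$ is even with respect to $\{y=x\}$, the reflection $R$ across this diagonal maps $\gamma$ to itself. The orbit encloses the extremum, which lies on the diagonal, so $\gamma$ meets the diagonal at some point $p$. There $f(p)=0$, because $f$ is odd under $R$.

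Parametrize $\gamma$ by $t\mapsto\Gamma(p,t)$, so that $f(\Gamma(p,0))=0$ and $\partial_t f(\Gamma(p,t))=\nabla^\perp H\cdot\nabla f$. A function on the circle $[0,T]$ vanishing at one point satisfies the one-dimensional Poincaré inequality
\[
\int_0^{T}f^2\,dt\le \frac{T^2}{\pi^2}\int_0^{T}|\partial_t f|^2\,dt .
\]
Dividing by $T(h)^2$, integrating in $h$, and using $dx\,dy=dh\,dt$ gives \eqref{eqn:Hardy1} on the cell $(0,\pi)^2$ with $C=\pi^{-2}$, since the critical set has measure zero. The remaining cells follow by the same argument (or by the symmetries). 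This requires $f$ to be smooth enough, e.g.\ $f\in H^1$, for the slice-wise argument to hold almost everywhere in $h$; for general $f$ one can approximate.
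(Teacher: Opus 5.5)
Your proposal is correct and follows essentially the same route as the paper, which also reduces the Hardy inequality to a one-dimensional Poincar\'e inequality on each closed orbit (the paper uses the coarea formula, which is your $dx\,dy=dh\,dt$), uses that $f$ vanishes where the orbit meets the diagonal, and derives the $L^p$ bound from $T\lesssim 1$ away from the separatrices and $T\lesssim 1+|\log|\psi_0||$ near them. The differences are cosmetic: you use a Dirichlet-type rather than mean-zero Poincar\'e inequality, you bound $T(h)$ and integrate $T(h)^{p+1}\,dh$ instead of bounding $T$ pointwise by $|\log\psi_0|$, and your saddle estimate needs no Morse chart, since $|\partial_x H|\gtrsim y$ and $|\partial_y H|\gtrsim x$ near each corner already give the $O(1+|\log h|)$ transit time.
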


\begin{proof}
Since $f$, $\psi_0$, and $\hat{\psi}$ are odd--odd, it suffices to work on the first quadrant
\[
\mathbb T^2_{+}:=\mathbb T^2\cap\{x>0,\ y>0\}.
\]

\medskip
We first identify the critical points of $\psi_0+\hat \psi$.
Because $\psi_0$ and $\hat \psi$ are even symmetric with respect to both diagonals, $2\pi$--periodic in $x$ and $y$ and odd-odd symmetric with respect to the origin,
the points
\[
(0,0),\ \Bigl(\frac{\pi}{2},\frac{\pi}{2}\Bigr),\ (\pi,0),\ (0,\pi),\ (\pi,\pi)
\]
are critical points of $\psi_0+\hat \psi$ in $\mathbb T^2_{+}$. By Sobolev embedding,
$\|\hat \psi\|_{C^{2}}\lesssim \|\hat \psi\|_{H^{3+\kappa}}$; hence, for
$\delta_0$ sufficiently small, these critical points remain non-degenerate and no other critical points
appear in $\mathbb T^2_{+}$.

\medskip
Let $\psi:=\psi_0+\hat \psi$. Now we prove \eqref{eqn:Hardy1} via coarea formula and Poincar\'e inequality on level sets of $\psi$.
 Applying the coarea formula to $\psi$ on $\mathbb T^2_{+}$ yields
\begin{equation}\label{eqn:co1}
\int_{\mathbb T^2_{+}} \frac{f^2}{T_{\psi}^2}\,dx\,dy
=
\int_{\mathbb R}\int_{\{\psi=t\}\cap \mathbb T^2_{+}}
\frac{f^2}{T_{\psi}^2}\,\frac{1}{|\nabla\psi|}\,d\mu\,dt,
\end{equation}
and
\begin{equation}\label{eqn:co2}
\int_{\mathbb T^2_{+}} |\nabla^\perp \psi\cdot \nabla f|^2\,dx\,dy
=
\int_{\mathbb R}\int_{\{\psi=t\}\cap \mathbb T^2_{+}}
|\nabla^\perp \psi\cdot \nabla f|^2\,\frac{1}{|\nabla\psi|}\,d\mu\,dt,
\end{equation}
where $d\mu$ denotes the Lebesgue measure in the level sets.

Fix $t$ such that $\{\psi=t\}\cap\mathbb T^2_{+}$ is a smooth closed curve and
let $x_t$ be  the intersection of the curve with the diagonal segment $\{0<y=x<\frac{\pi}{2}\}$, we have the \emph{rescaled} Hamiltonian
parametrization of that level set
$\Gamma_{t}:[0,1]\to \{\psi=t\}\cap\mathbb T^2_{+}$. 
\begin{equation}\label{eqn:rescaled-flow}
\begin{aligned}
\frac{d\Gamma_t}{ds} &= T_{\psi}(x_t)\,\nabla^\perp \psi(\Gamma_t(s)),\\
\Gamma_t(0) &= x_t.
\end{aligned}
\end{equation}
Due to the even symmetry of $\psi$ to the diagonal $\{y=x\}$, we have $\Gamma_{t}(s)$ and $\Gamma_{t}(1-s)$ are even symmetric to the diagonal $\{y=x\}$. 
Along the curve $\Gamma_{t}$, set $\hat f(s):=f(\Gamma_t(s))$. Since $f$ is odd with respect to $\{y=x\}$,
we have $\hat f(0)=0$, $\hat f$ is odd with respect to $\frac{1}{2}$, and hence it has zero
average over $[0,1]$. The Poincar\'e inequality gives
\[
\int_0^1 |\hat f(s)|^2\,ds \le C \int_0^1 |\hat f'(s)|^2\,ds.
\]
Using $\hat f'(s)=T_{\psi}(x_t)\,(\nabla^\perp\psi\cdot\nabla f)(\Gamma_t(s))$ and the identity
$d\mu = |\nabla\psi|\,T_{\psi}(x_t)\,ds$, this becomes
\begin{equation}\label{eqn:poincare}
\int_{\{\psi=t\}\cap\mathbb T^2_{+}}
\frac{f^2}{T_{\psi}^2}\,\frac{1}{|\nabla\psi|}\,d\mu
\le
C \int_{\{\psi=t\}\cap\mathbb T^2_{+}}
|\nabla^\perp\psi\cdot\nabla f|^2\,\frac{1}{|\nabla\psi|}\,d\mu.
\end{equation}
Integrating \eqref{eqn:poincare}  with respect to $t$ over $\mathbb{R}$  and using \eqref{eqn:co1}--\eqref{eqn:co2}, as $\psi_0,\hat \psi$ and $f$ are odd-odd, we have  
\eqref{eqn:Hardy1}.

\medskip
In the end, we will derive \eqref{eqn:period Holder} based on the behavior of $\psi$.
By symmetry, it suffices to work on
\[
\Omega_0:=\{(x,y)\in(0,\pi)^2:\ 0<x<y<\pi-x\}.
\]
Since $\psi=\psi_0+\hat \psi$ and $\|\hat \psi\|_{C^{2,\alpha}}\ll 1$, there exists $C>1$ such that on $\Omega_0$,
\begin{equation}\label{eqn:psi bound 0}
\frac{1}{C}\psi_0 \le \psi \le C\psi_0,
\qquad
\frac{1}{C}\partial_x\psi_0 \le \partial_x\psi \le C\partial_x\psi_0.
\end{equation}
Fix $\delta\in(0,1)$ and decompose $\Omega_0=\Omega_1\cup\Omega_2\cup\Omega_3$ where
\[
\Omega_1:=\Omega_0\cap\{x<\delta\},\qquad
\Omega_2:=\Omega_0\cap B_\delta\Bigl(\Bigl(\frac{\pi}{2},\frac{\pi}{2}\Bigr)\Bigr),\qquad
\Omega_3:=\Omega_0\setminus(\Omega_1\cup\Omega_2).
\]
Below is an illustrative figure for the domain division in $\Omega_0$.
 \begin{figure}[h!]
 \caption{Division of the domain}
    \centering
    \includegraphics[width=0.25\linewidth]{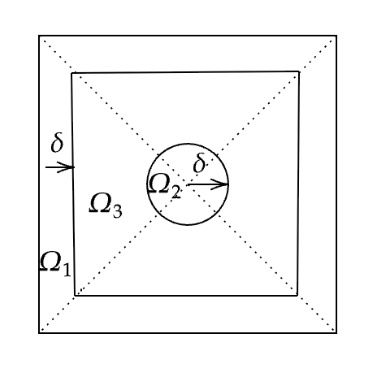}
\end{figure}
For $\psi_0(x,y)=\sin x\sin y$, we have the following bounds (with constants depending on $\delta$):
on $\Omega_1$,
\begin{equation}\label{eqn:psi bound 1}
\psi_0(x,y)\sim xy,\qquad \partial_x\psi_0(x,y)\gtrsim y,
\end{equation}
on $\Omega_2$,
\begin{equation}\label{eqn:psi bound 2}
1-\psi_0(x,y)\sim \Bigl(\frac{\pi}{2}-x\Bigr)^2+\Bigl(\frac{\pi}{2}-y\Bigr)^2,
\qquad
\partial_x\psi_0(x,y)\gtrsim \Bigl|\frac{\pi}{2}-x\Bigr|,
\end{equation}
and on $\Omega_3$,
\begin{equation}\label{eqn:psi bound 3}
\partial_x\psi_0(x,y)\gtrsim 1.
\end{equation}
Combining \eqref{eqn:psi bound 0} with \eqref{eqn:psi bound 1}--\eqref{eqn:psi bound 3} gives analogous
lower bounds for $\partial_x\psi$ and comparable bounds for $\psi$ in the same regions.
From these bounds, we obtain:
\begin{equation}\label{eqn:T bound 1}
T_{\psi}\lesssim_{\delta} 1 \quad \text{on }\Omega_2\cup\Omega_3,
\end{equation}
and
\begin{equation}\label{eqn:T bound 2}
T_{\psi}\lesssim_{\delta} 1+|\log |\psi_0|| \quad \text{on }\Omega_1.
\end{equation}
Since $|\log |\psi_0||\in L^p(\Omega_1)$ for every $p<\infty$, \eqref{eqn:period Holder} follows from
\eqref{eqn:T bound 1}--\eqref{eqn:T bound 2} and the various symmetry assumptions on $\psi_0$ and $\hat \psi$.
\end{proof}
\medskip
We now prove Proposition~\ref{prop:celluar symmetry}.

\begin{proof}[Proof of Proposition~\ref{prop:celluar symmetry}]
Without loss of generality, by scaling, we assume $\langle\psi_{\epsilon}-\psi_0,\psi_0\rangle_{L^2(\mathbb T^2)}=0$.
Among odd--odd functions, we have the following fact:
\begin{quote}
If $H$ is odd--odd on $\mathbb T^2$, then $H$ is even (respectively odd) symmetric with respect to
$\{y=x\}$ if and only if it is even (respectively odd) symmetric with respect to $\{y=-x\}$.
\end{quote}
Based on the above fact, we now decompose $\psi-\psi_0$ into parts that are respectively  even-even symmetric and odd-odd symmetric to the diagonals.
\begin{equation}\label{eqn:decompose odd even}
    \psi_{\epsilon}-\psi_0 = \psi_{ee}+\psi_{oo},
\end{equation}
where $\psi_{ee}$ is even with respect to both diagonals and $\psi_{oo}$ is odd with respect to both
diagonals. Since $\psi_{\epsilon}$ is a steady Euler stream function,
\[
\nabla^\perp\psi_{\epsilon}\cdot\nabla\Delta\psi_{\epsilon}=0.
\]
Using $\Delta\psi_0=-2\psi_0$ and the decomposition \eqref{eqn:decompose odd even}, we have 
\begin{equation}\label{eq:oo-eqn}
\nabla^\perp(\psi_0+\psi_{ee})\cdot \nabla (\Delta+2)\psi_{oo}
=
2\nabla^\perp\psi_{ee}\cdot \nabla \psi_{oo}
-\nabla^\perp\psi_{oo}\cdot \nabla \Delta \psi_{ee}.
\end{equation}
Fix any $q\in(\frac{2}{1+\kappa},2)$ and set $p:=\frac{2q}{2-q}$ so that $\frac1q=\frac12+\frac1p$.
As $\psi_{oo}$ is orthogonal to the kernel of $(\Delta+2)$, by Applying Proposition~\ref{prop: Hamiltonian celluar period} with $f=(\Delta+2)\psi_{oo}$ and 
$\hat{\psi}=\psi_{ee}$, and then using H\"older's inequality and Sobolev embedding
$H^{3+\kappa}\subset W^{3,\frac{2}{1-\kappa}}$, we get
\begin{equation}\label{eq:psi_oo_Lq}
\begin{aligned}
\|\psi_{oo}\|_{W^{1,p}}
&\le C\,\|(\Delta+2)\psi_{oo}\|_{L^{q}}\\
&\le C\left\|\frac{(\Delta+2)\psi_{oo}}{T_{\psi_0+\psi_{ee}}}\right\|_{L^{2}}
\,
\|T_{\psi_0+\psi_{ee}}\|_{L^{p}} \\
&\le
C\,\|T_{\psi_0+\psi_{ee}}\|_{L^{p}}\,
\bigl\|\nabla^{\perp}(\psi_0+\psi_{ee})\cdot \nabla (\Delta +2)\psi_{oo}\bigr\|_{L^2}\\
&\le
CC_{\kappa}\,C_{p}\,
\|\psi_{ee}\|_{H^{3+\kappa}}\,
\|\psi_{oo}\|_{W^{1,p}},
\end{aligned}
\end{equation}
where in the last line we used \eqref{eq:oo-eqn} and that each term on the right-hand side of
\eqref{eq:oo-eqn} is bounded in $L^2$ by $C_\kappa\|\psi_{ee}\|_{H^{3+\kappa}}\|\psi_{oo}\|_{W^{1,p}}$.

Therefore, if $\|\psi_{\epsilon}-\psi_0\|_{H^{3+\kappa}}$ is sufficiently small so that
\[
CC_{\kappa}C_{p}\,\|\psi_{ee}\|_{H^{3+\kappa}}<1,
\]
 \eqref{eq:psi_oo_Lq} implies $\psi_{oo}=0$. Consequently, $\psi_{\epsilon}=\psi_0+\psi_{ee}$ is even symmetric with respect to
both diagonals. This proves the even symmetry of $\psi_{\epsilon}$ to the diagonals.
Now as argued in the proof of Proposition \ref{prop: Hamiltonian celluar period}, the critical points of $\psi_{\epsilon}$ in $\mathbb{T}_{+}^2$ are $\{(0,0),(0,\pi),(\pi,0),(\frac{\pi}{2},\frac{\pi}{2}),(\pi,\pi)\}$. As $\psi_{\epsilon}$ is non-negative in $\mathbb{T}_{+}^2$ with $\psi_{\epsilon}$ and $\Delta \psi_{\epsilon}$ both vanish at $\partial \mathbb{T}_{+}^2$, and the steady state equation implies that $\Delta \psi_{\epsilon}$ is constant along the connected component of the level set of $\psi_{\epsilon}$, there exists a function $F:\mathbb{R}^{+}\rightarrow \mathbb{R}$ with $F(0)=0$, such that $\Delta \psi_{\epsilon}=F(\psi_{\epsilon})$ in $\mathbb{T}_{+}^2$. Now as $\psi_{\epsilon}$ is odd-odd symmetric with respect to the origin, by extending $F$ to be an odd function, we have $\Delta \psi_{\epsilon}= F(\psi_{\epsilon})$ in $\mathbb{T}^2$.
\end{proof}
\begin{remark}
It can be proved, similarly to Proposition~\ref{prop:celluar symmetry}, that if a steady state is odd in \(x\) and sufficiently close to the cellular flow in \(H^{3+\kappa}\), for some \(\kappa>0\), then, up to a translation in the \(y\)-direction, it must be centrally symmetric with respect to \(\left(\frac{\pi}{2},\frac{\pi}{2}\right)\). Thus the choice of the space \(V\) in Proposition~\ref{Prop:cellular flexibility} is not merely technical: under the odd symmetry in \(x\), it is precisely the space in which steady states near the cellular flow live. For a related phenomenon in the context of \(V\)-states, see~\cite{huang2025rigidity}.
\end{remark}
\appendix
\section{Facts from complex analysis}
In this section, we list facts from complex analysis that we will use in the paper.
\begin{proposition}[in \cite{robertson1936theory}]\label{prop: convex}
    Let $f$ be a univalent function from $\mathbb{D}$ to $\mathbb{C}$. If $f$ satisfies $\Re(f^{'}(z)(1-z^2))>0$, for all $z\in \mathbb{D}$, then $f(\mathbb{D})$ is convex in $y$.
\end{proposition}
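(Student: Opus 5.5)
The statement is the classical result of Robertson: if $f$ is univalent on $\mathbb{D}$ and $\Re\bigl(f'(z)(1-z^2)\bigr)>0$ on $\mathbb{D}$, then $f(\mathbb{D})$ is convex in the $y$-direction, i.e.\ every vertical line meets it in an interval. The plan is a moving-curves argument on the level sets $|z|=r$. Let $\Omega_r:=f(\{|z|<r\})$ for $r<1$. Since $f(\mathbb{D})=\bigcup_{r<1}\Omega_r$ is an increasing union, and an increasing union of sets that are convex in $y$ is again convex in $y$ (any two points on a vertical line lie in a common $\Omega_r$), it suffices to prove that each $\Omega_r$ is convex in $y$, at least for $r$ close to $1$. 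Each $\Omega_r$ is a Jordan domain bounded by the analytic curve $\gamma_r(\theta)=f(re^{i\theta})$, because $f$ is univalent and $f'\neq 0$.

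The first step is to rewrite the hypothesis as a geometric statement along $\partial\Omega_r$. We have $\partial_\theta \gamma_r = i re^{i\theta}f'(re^{i\theta})$. Substituting $z=re^{i\theta}$ gives
\[
ire^{i\theta}f'(z)\,\frac{1-z^2}{ire^{i\theta}}=f'(z)(1-z^2),
\qquad
\frac{1-z^2}{ire^{i\theta}}=\frac{e^{-i\theta}-r^2e^{i\theta}}{ir}.
\]
Set $h(\theta):=\frac{e^{-i\theta}-r^2e^{i\theta}}{ir}$. Its real part is $\frac{(1+r^2)\sin\theta}{r}\cdot(-1)$, up to a sign convention, so $h$ has real part proportional to $-\sin\theta$. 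The condition $\Re\bigl(\partial_\theta\gamma_r\cdot h\bigr)>0$ then controls the sign of the horizontal velocity. Writing $\partial_\theta\gamma_r=a+ib$, we get
\[
\Re\bigl((a+ib)h\bigr)=a\,\Re h-b\,\Im h .
\]
At $\theta=0$ and $\theta=\pi$ we have $\Re h=0$, so the constraint there concerns the vertical component, and the points $f(\pm r)$ play the role of turning points. The goal is to show that $\Re\gamma_r(\theta)$ is strictly monotone on $(0,\pi)$ and on $(\pi,2\pi)$, with opposite monotonicity on the two arcs. Once that is established, every vertical line $\{x=c\}$ with $c$ strictly between $\Re f(r)$ and $\Re f(-r)$ meets $\partial\Omega_r$ in exactly two points, and lines outside that range miss it. By the Jordan curve theorem, $\{x=c\}\cap\Omega_r$ is then a single segment, which is exactly convexity in $y$.

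The expected obstacle is extracting monotonicity of $\Re\gamma_r$ from a real-part condition on a product, because the condition mixes $a=\partial_\theta\Re\gamma_r$ with $b$. The cleanest route is Robertson's: the hypothesis is rotation-sensitive only through the factor $1-z^2$, which vanishes at $z=\pm1$, so the extreme points of the domain in $x$ should correspond to $\theta\in\{0,\pi\}$. If a direct pointwise argument fails, the fallback is a Carathéodory–type argument: the function $p(z)=f'(z)(1-z^2)$ has positive real part, so $\arg p\in(-\pi/2,\pi/2)$. Then
\[
\arg\partial_\theta\gamma_r=\theta+\tfrac{\pi}{2}+\arg f'(re^{i\theta}) ,
\qquad
\arg f'=\arg p-\arg(1-z^2),
\]
and as $r\to1$ the quantity $\arg(1-r^2e^{2i\theta})$ tends to $\theta-\pi/2$ on $(0,\pi)$. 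Hence $\arg\partial_\theta\gamma_r$ lies in $(\pi/2,3\pi/2)$ up to $o(1)$, which forces $\Re\partial_\theta\gamma_r<0$ on compact subsets of $(0,\pi)$ for $r$ near $1$, and symmetrically $\Re\partial_\theta\gamma_r>0$ on $(\pi,2\pi)$. The neighbourhoods of $\theta=0,\pi$ need a separate local analysis. One option is to apply the same bound to a slightly rotated factor $1-e^{2i\alpha}z^2$ via a limiting argument. A simpler option is to note that a small arc near a turning point cannot create a second crossing of a vertical line in the limit. Passing $r\to1$ through the union argument then completes the proof.
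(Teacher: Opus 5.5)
The paper gives no proof of this statement; it is quoted from Robertson. Judged on its own terms, your argument has a genuine gap: the monotonicity structure you aim for on the circles $|z|=r$ is false.

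\textbf{Why the circle argument fails.} With $p:=f'(z)(1-z^2)$ and $z=re^{i\theta}$, one has exactly
\[
\partial_\theta\Re\gamma_r(\theta)=\frac{-(r^{-1}+r)\sin\theta\,\Re p(z)-(r^{-1}-r)\cos\theta\,\Im p(z)}{\bigl|r^{-1}e^{-i\theta}-re^{i\theta}\bigr|^{2}} .
\]
For $r<1$ the $\Im p$ term is not controlled by $\Re p>0$. The vanishing of $\Re h$ at $\theta=0,\pi$ only fixes the sign of $\partial_\theta\Im\gamma_r$ there. It does not make $f(\pm r)$ extreme points of $\Re\gamma_r$.

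Your asymptotic fallback does not rescue this. From $\arg\partial_\theta\gamma_r=\pi+\arg p+o(1)$ you cannot conclude $\Re\partial_\theta\gamma_r<0$, because $\arg p(re^{i\theta})$ may itself tend to $\pm\pi/2$ as $r\to1$, and then the $o(1)$ term decides the sign.

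A concrete example satisfying the hypotheses is $p(z)=\frac{1-iz}{1+iz}$. Here $\Re p=\frac{1-r^2}{|1+iz|^2}$ and $\Im p=\frac{-2r\cos\theta}{|1+iz|^2}$, so the formula gives, for every $r\in(0,1)$,
\[
\operatorname{sign}\partial_\theta\Re\gamma_r(\theta)=\operatorname{sign}\Bigl(\cos^2\theta-\tfrac{1+r^2}{2r}\sin\theta\Bigr).
\]
Hence $\Re\gamma_r$ is increasing on $[0.1,0.5]\subset(0,\pi)$ for all $r\in[\tfrac12,1)$. Its maximum is at $\theta_r\to\arcsin\frac{\sqrt5-1}{2}$ and its minimum at $\pi-\theta_r$, not at $\theta=0,\pi$. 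So your claim already fails on compact subsets of $(0,\pi)$, not only near $\theta=0,\pi$. The two patches you propose for those neighbourhoods (rotating the factor, or ``a small arc cannot create a second crossing'') are not arguments in any case.

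\textbf{The missing idea: foliate by arcs through $\pm1$, not by circles.} Along the circular arcs from $-1$ to $1$ the hypothesis gives exact monotonicity with no error term.
\begin{itemize}
\item Let $\phi(z)=\frac12\log\frac{1+z}{1-z}$, which maps $\mathbb{D}$ onto $S=\{|\Im w|<\pi/4\}$ with $\phi'=(1-z^2)^{-1}$, and put $F=f\circ\phi^{-1}$. Then $F'=p\circ\phi^{-1}$, so $t\mapsto\Re F(t+ic)$ is strictly increasing for each $c$.
\item The limits $a(c)=\lim_{t\to-\infty}\Re F(t+ic)$ and $b(c)=\lim_{t\to+\infty}\Re F(t+ic)$ are monotone limits of the harmonic functions $\Re F(\cdot\mp T)$ on $S$. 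By Harnack's principle each is either identically infinite, or harmonic and independent of $t$, hence affine in $c$.
\item Therefore, for every $x_0$, the set $\{c:\ a(c)<x_0<b(c)\}$ is an interval. The level set $\{\Re F=x_0\}$ is the graph $t=\tau(c)$ of a continuous function over it, by the intermediate value and implicit function theorems.
\item Consequently $f(\mathbb{D})\cap\{\Re w=x_0\}=F(\{\Re F=x_0\})$ is connected, i.e.\ a vertical interval. This does not even use univalence.
\end{itemize}

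An equivalent route: replace $p$ by $p(\rho z)$ and work on $|z|=1$ itself. There the coefficient $r^{-1}-r$ of the bad term vanishes and $\partial_\theta\Re f_\rho(e^{i\theta})=-\Re p(\rho e^{i\theta})/(2\sin\theta)$. Then let $\rho\to1$ using Carath\'eodory kernel convergence.
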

\renewcommand{\baselinestretch}{1.25}
\section{Facts about action-angle coordinates}
In this section, we list and prove the facts for the action-angle coordinates of the Hamiltonian flow.
\begin{proposition}[Regularity of the period function]\label{prop:regularity period}
Let $\Omega\subset \mathbb{R}^2$ be compact and let $H:\Omega\to\mathbb{R}$ be a Hamiltonian. 
For $x\in\Omega$, let $T_H(x)$ denote the minimal period of the Hamiltonian flow generated by $H$. Fix $a>0$ and define
\[
\mathcal{R}_{a}
:=\Bigl\{x\in\Omega:\ |\nabla H(y)|>a \ \text{for every } y\in\Omega \text{ with } H(y)=H(x)\Bigr\}.
\]
Then:
\begin{itemize}
    \item[(1)] If $H\in C^{1,1}(\Omega)$, for every
    $\tilde H$ with $\|H-\tilde H\|_{C^{1,1}(\Omega)}\le \frac{a}{4}$, there is a constant
    $C\bigl(a,\|H\|_{C^{1,1}(\Omega)}\bigr)$ for which
    \[
    \|T_{\tilde H}\|_{C^{0,1}(\mathcal{R}_{2a})}\le C\bigl(a,\|H\|_{C^{1,1}(\Omega)}\bigr).
    \]
    \item[(2)] Let $k\ge 2$ and $H\in C^{k,\alpha}(\Omega)$, then for every
    $\tilde H$ with $\|H-\tilde H\|_{C^{k,\alpha}(\Omega)}\le \frac{a}{4}$, there is a constant
    $C\bigl(a,\|H\|_{C^{k,\alpha}(\Omega)}\bigr)$ for which
    \[
    \|T_{\tilde H}\|_{C^{k-1,\alpha}(\mathcal{R}_{2a})}\le C\bigl(a,\|H\|_{C^{k,\alpha}(\Omega)}\bigr).
    \]
\end{itemize}
\end{proposition}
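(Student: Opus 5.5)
We work in local flow-box coordinates, in which $T_{\tilde H}$ has an explicit integral formula. The first step is to control $\tilde H$ on the region where $T$ is needed. Let $x\in\mathcal R_{2a}$ and $y$ be any point with $\tilde H(y)=\tilde H(x)$. Since $\|H-\tilde H\|_{C^1}\le a/4$, the level $\tilde H(x)$ differs from $H(x)$ by at most $a/4$. Every point on the $\tilde H$-level through $x$ therefore lies in $\{|H-H(x)|\le a/4\}$. By compactness and continuity of $\nabla H$, this band stays in a region where $|\nabla H|>3a/2$, provided we first shrink the admissible perturbation size (or argue by a uniform-continuity/compactness covering of the set of levels carrying only regular points). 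Consequently $|\nabla\tilde H|\ge a$ on the whole $\tilde H$-level set through $x$, which is then a regular closed curve (the flow cannot escape, since $\tilde H$ is constant on $\partial\Omega$ components). The minimal period is finite, and it is bounded by $\mathrm{length}/a$. The length is in turn bounded via the coarea formula and $|\nabla \tilde H|\ge a$ in terms of $|\Omega|$ and $\|H\|_{C^{1,1}}$.

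Next we write the period as a line integral,
\[
T_{\tilde H}(x)=\oint_{\{\tilde H=\tilde H(x)\}_x}\frac{d\mu}{|\nabla\tilde H|},
\]
where the subscript denotes the connected component through $x$. We straighten the flow near a point $x_0$ using coordinates $(s,\tau)\mapsto \Gamma(x_0+s n,\tau)$, where $n=\nabla\tilde H(x_0)/|\nabla\tilde H(x_0)|$. Equivalently, we set $T(x_0+sn)$ as the first return time of the flow of $\nabla^\perp\tilde H$ to the transversal segment $\{x_0+sn\}$. For $\tilde H\in C^{1,1}$ the vector field is Lipschitz, so the flow map is Lipschitz in its initial data with constant $e^{L T}$, where $L=\|\tilde H\|_{C^{1,1}}$ and $T$ is bounded from the first step. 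The return time solves $\langle \Gamma(x_0+sn,\tau)-x_0, n^\perp\rangle$-type transversality equations, with transversality quantified by $|\nabla\tilde H|\ge a$. By the implicit function theorem in its Lipschitz version (Clarke, or directly via the monotonicity of $\tilde H$ along $n$), the return time is Lipschitz in $s$. Constancy of $T$ along orbits then transports this bound to a tubular neighborhood of the whole level component.

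Cover $\mathcal R_{2a}$ by finitely many such transversals using the uniform bounds. For part (1) this gives $\|T_{\tilde H}\|_{C^{0,1}(\mathcal R_{2a})}\le C(a,\|H\|_{C^{1,1}})$. For part (2), with $\tilde H\in C^{k,\alpha}$, the vector field is $C^{k-1,\alpha}$. The flow is then $C^{k-1,\alpha}$ in the initial data, with bounds depending on $T$ and the norm. The classical implicit function theorem applied to the return-time equation $\tilde H$-transversal crossing gives $T\in C^{k-1,\alpha}$ along the transversal, hence on the neighborhood because $T$ is constant along orbits. Alternatively, in the case $k\ge2$ we can differentiate the line-integral formula in the level variable $h$, using $\partial_h$ of the level curve $=\nabla\tilde H/|\nabla\tilde H|^2$. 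Each derivative costs one derivative of $\tilde H$ and one power of $a^{-1}$, giving $T\in C^{k-1,\alpha}$ as a function of $h$ composed with $\tilde H$.

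The main obstacle is the first step: ensuring uniformly in $\tilde H$ that the level sets through $\mathcal R_{2a}$ stay away from critical points of $\tilde H$. This matters because the period blows up logarithmically near separatrices. I would handle it by exploiting the slack between $2a$ and $a$ together with continuity of $h\mapsto\min_{\{H=h\}}|\nabla H|$ from below. If needed, I would phrase the constant as also depending on a modulus of this map, which is determined by $H$.
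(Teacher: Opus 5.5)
Your route is a legitimate alternative to the paper's. However, the first step, which you rightly single out as the main obstacle, does not close as you set it up.

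\textbf{The gap.} There is first a small slip. Both $\tilde H(x)$ and $\tilde H(y)$ move, so the $\tilde H$-level through $x$ only lies in $\{|H-H(x)|\le a/2\}$, not $a/4$.

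The real problem is the mechanism. You control the \emph{whole} band $\{|H-H(x)|\le\sigma\}$, and hence the whole level set $\{\tilde H=\tilde H(x)\}$, using lower semicontinuity of $m(h)=\min_{\{H=h\}}|\nabla H|$ or a compactness covering of $\{m>2a\}$. This cannot be made uniform on $\mathcal R_{2a}$, because the open set $\{m>2a\}$ may accumulate at a critical value.

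For example, let $p$ be a local maximum with $H(p)=c$. Suppose the levels just above $c$ avoid $p$ and are steep elsewhere, as happens around a second, higher maximum. Then:
\begin{itemize}
\item every $x$ with $H(x)=c+\delta$, $\delta\downarrow0$, lies in $\mathcal R_{2a}$;
\item any band of fixed width around $H(x)$ contains $p$ once $\delta$ is small;
\item an arbitrarily $C^{1,1}$-small bump lifting $p$ just above $c+\delta$ creates a small loop of $\{\tilde H=\tilde H(x)\}$ on which $|\nabla\tilde H|\approx0$.
\end{itemize}
So no modulus of $m$ helps. You must work with the connected component through $x$ and use connectedness.

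Here is how. Set $M=\|H\|_{C^{1,1}}$ and $\sigma=a^2/(2M)$, and let $C_x$ be the component of $\{H=H(x)\}$ through $x$. The flow of $\nabla H/|\nabla H|^2$ for $|s|<\sigma$ sweeps out a tube $U_x$ on which $|\nabla H|>3a/2$, since the speed is at most $1/a$. Its boundary lies in $\{H=H(x)\pm\sigma\}$, so $U_x$ is a connected component of $\{|H-H(x)|<\sigma\}$.

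Now assume $\|\tilde H-H\|_{C^0}<\sigma/2$ and $\|\nabla(\tilde H-H)\|_{L^\infty}\le a/4$. The component of $\{\tilde H=\tilde H(x)\}$ through $x$ is a connected subset of that band meeting $U_x$, so it lies in $U_x$, where $|\nabla\tilde H|>a$. This smallness depends only on $a$ and $M$. All your later steps should then be run inside $U_x$.

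Some shrinking of the perturbation is genuinely necessary, so your instinct is right. The paper's proof passes over this point: it simply asserts that $\Gamma^{\tilde H}_x$, together with components of $\partial\mathcal R_a$, bounds a region $\tilde\Omega_x\subset\mathcal R_a$, and that $\epsilon_1$ can be chosen uniformly. With $C^0$-perturbations of size $a/4$ this fails. Near a saddle with Hessian eigenvalues $\pm\lambda$, the level $\{H=\delta\}$ has $\min|\nabla H|\approx\sqrt{2\lambda\delta}$. So if $\lambda>8a$ one may take $2a^2/\lambda<\delta<a/4$ (with $H$ steep elsewhere on that level, so $x\in\mathcal R_{2a}$). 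Take a slowly varying $\phi$ equal to $\delta$ near the saddle and $0$ near $x$; it has $\|\phi\|_{C^{1,1}}\le a/4$. In a double-well configuration it places $x$ on a homoclinic orbit of $H+\phi$, so $T_{\tilde H}(x)=\infty$.

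\textbf{Comparison of routes.} Inside the tube, your Poincar\'e-section argument works:
\begin{itemize}
\item transversality comes from $\nabla\tilde H\cdot n\gtrsim a$;
\item the return time $\tau(s)$ of $x_0+sn$ inherits the Lipschitz (resp.\ $C^{k-1,\alpha}$) dependence of the flow on initial data, via the (Lipschitz) implicit function theorem;
\item you pass from the transversal to the tube through $T=\tau\circ\beta^{-1}\circ\tilde H$, where $\beta(s)=\tilde H(x_0+sn)$.
\end{itemize}

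The paper instead parametrizes nearby level curves by the flow of $Y=\nabla\tilde H/|\nabla\tilde H|^2$, for which $\tilde H(\Phi(s,\theta))=h_0+s$. It writes $T=\tilde T\circ\tilde H$ with $\tilde T(h)=\int_0^{\ell_0}J/|\nabla\tilde H|\,d\theta$ and differentiates in $h$, using Gr\"onwall for $J=|\partial_\theta\gamma|$. It gets the $L^\infty$ bound from the divergence theorem applied to the same field. Your ``alternative'' for part (2) is exactly this argument.

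Your route is more generic, since it works for any flow with a transversal section. The paper's gives an explicit level-by-level formula that makes higher derivatives routine.

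One small point: the coarea formula alone only bounds an average of level-curve lengths over a range of levels. To bound the length of the single curve through $x$, you need comparability of nearby lengths, which is precisely the Gr\"onwall estimate on $J$.
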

\begin{proof}
For the sake of convenience of notation, $C(a,\|H\|_{C^{1,1}})$ denotes a class of positive constants depending on $a$ and $\|H\|_{C^{1,1}}$, and it may change from line to line.
Write
\[
\partial\mathcal{R}_{a}=\bigcup_{i=1}^{n}\Gamma^{i},
\]
where the $\Gamma^{i}$ are pairwise disjoint closed $C^{1,1}$ curves. Fix $x\in \mathcal{R}_{2a}$ and let
$\Gamma^{\tilde H}_{x}$
denote the connected component of the level set $\{\tilde H=\tilde H(x)\}$ passing through $x$.
Together with a (possibly empty) subcollection of boundary components
$\Gamma^{i_1},\dots,\Gamma^{i_j}\subset \partial\mathcal{R}_a$,
the curve $\Gamma^{\tilde H}_{x}$ encloses a region $\tilde\Omega_x\subset \mathcal{R}_a$.

Let
\[
F:=\frac{\nabla \tilde H}{\|\nabla \tilde H\|^{2}}.
\]
 By the fact that $F\cdot n = |\nabla\tilde H|^{-1}$ on
each level curve of $\tilde H$, we apply the divergence theorem to $\tilde\Omega_x$ and obtain
\begin{equation}\label{eq:stokes-period}
T_{\tilde H}(x)
=\int_{\Gamma^{\tilde H}_{x}} F\cdot n\,ds
=\sum_{m=1}^{j}\int_{\Gamma^{i_m}} F\cdot n\,ds
-\int_{\tilde\Omega_x}\nabla\cdot F\,dx\,dy .
\end{equation}
As $\|H-\tilde H\|_{C^{1,1}}\le a/4$, we have
$|\nabla \tilde H|\ge a/2$ on $\mathcal{R}_a$. As a result, we have  $|F|\lesssim a^{-1}$ and $|\nabla F|\lesssim a^{-3}\| \tilde H\|_{C^{1,1}}$.
Using \eqref{eq:stokes-period} and $\tilde\Omega_x \subseteq \mathcal{R}_a$, we obtain the uniform bound of $T_{\tilde H}$ for all $x\in \mathcal{R}_{2a}$.
\begin{equation}\label{eq:TLinfty-bound}
|T_{\tilde H}(x)|
\lesssim \|H\|_{C^{1,1}}\Bigl(\frac1a\sum_{i=1}^{n}|\Gamma^{i}|+\frac{1}{a^3}\,|\mathcal{R}_{a}|\Bigr)\leq C(a,\|H\|_{C^{1,1}}).
\end{equation}

\medskip

Next, we prove the uniform Lipschitz dependence of $T_{\tilde H}$ in $\mathcal{R}_{2a}$. Choose $\epsilon_{1}>0$, such that for every
$x\in \mathcal{R}_{2a}$ the set
\[
\{y\in\mathbb{R}^{2}:\ |\tilde H(y)-\tilde H(x)|<\epsilon_{1}\}
\]
has a connected component $U\subset \mathcal{R}_{a}$ and $U$ is an open tubular neighborhood of $\Gamma^{\tilde H}_{x}$.
Set $h_{0}:=\tilde H(x)$, for $h\in (h_{0}-\epsilon_{1},h_{0}+\epsilon_{1})$,
let
\[
\Gamma_{h}:=\{ \tilde H=h\}\cap U,
\]
 $\{\Gamma_{h}\}$ is a family of $C^{1,1}$ closed curves foliating $U$.

To parameterize this foliation, fix a $C^{1,1}$ parametrization $z(\theta)$ of $\Gamma_{h_{0}}$,
$\theta\in[0,\ell_{0}]$, and consider the vector field
\[
Y(\xi):=\frac{\nabla \tilde H(\xi)}{|\nabla \tilde H(\xi)|^{2}}, \qquad \xi\in U.
\]
Since $\tilde H\in C^{1,1}$ and $|\nabla \tilde H|\ge a/2$ on $U$, the field $Y$ is Lipschitz on $U$.
Let $\Phi(s,\theta)$ solve
\begin{equation}\label{eq:flowY}
\partial_{s}\Phi(s,\theta)=Y(\Phi(s,\theta)),
\qquad
\Phi(0,\theta)=z(\theta).
\end{equation}
The map $(s,\theta)\mapsto \Phi(s,\theta)$ gives a  bi-Lipschitz parametrization of an open neighborhood of $\Gamma_{h_0}$,  
with
\[
\|\Phi\|_{C^{0,1}}\le C(a,\|H\|_{C^{1,1}}),
\qquad
\|\Phi^{-1}\|_{C^{0,1}}\le C(a,\|H\|_{C^{1,1}}).
\]
As
\[
\frac{d}{ds}\tilde H(\Phi(s,\theta))
=\nabla \tilde H(\Phi(s,\theta))\cdot Y(\Phi(s,\theta))=1,
\] 
we have $\tilde H(\Phi(s,\theta))=h_{0}+s$ and  $\Phi(s,\cdot)$ gives a parametrization of $\Gamma_{h_0+s}$.

Define
\[
\gamma(s,\theta):=\Phi(s,\theta),
\qquad
J(s,\theta):=|\partial_{\theta}\gamma(s,\theta)|.
\]
Since $T_{\tilde H}$ is constant along each connected component of a level set, we have $T_{\tilde H}(\xi)=\tilde T(\tilde H(\xi))$ for $\xi\in U$, where $\tilde T$ is 
a one-variable period function given by the definition below:
\begin{equation}\label{eq:Ttilde-def}
\tilde T(h)
:=\int_{\Gamma_{h}}\frac{1}{|\nabla \tilde H|}\,d\mu
=\int_{0}^{\ell_{0}}\frac{J(h-h_{0},\theta)}{|\nabla \tilde H(\gamma(h-h_{0},\theta))|}\,d\theta.
\end{equation}
We now give a uniform Lipschitz bound for $\tilde T$  on $(h_{0}-\epsilon_{1},h_{0}+\epsilon_{1})$.

Set
\[
I(s,\theta):=\frac{J(s,\theta)}{|\nabla \tilde H(\gamma(s,\theta))|},
\]
we show that $I(\cdot,\theta)$ is uniformly Lipschitz in $s$.

\smallskip
\emph{(a) Control of $J$.}
By differentiating $\partial_{\theta}\gamma$ in $s$ and using $\partial_{s}\gamma=Y(\gamma)$, we have 
\[
\partial_{s}(\partial_{\theta}\gamma)=DY(\gamma)\,\partial_{\theta}\gamma.
\]
Hence for a.e.\ $s$,
\[
\partial_{s}J
=\partial_{s}|\partial_{\theta}\gamma|
\le \|DY\|_{L^{\infty}(U)}\,|\partial_{\theta}\gamma|
=\|DY\|_{L^{\infty}(U)}\,J.
\]
By Gr\"onwall's inequality, for $|s|<\epsilon_{1}$, 
\begin{equation}\label{eq:J-bound}
\begin{aligned}
    &J(s,\theta)\le e^{\|DY\|_{L^{\infty}(U)}|s|}\,J(0,\theta)\le e^{\epsilon_0\|DY\|_{L^{\infty}(U)}}\,|H|_{C^{1,1}}\leq  C\bigl(a,\|H\|_{C^{1,1}}\bigr).
\\&
|\partial_s J(s,\theta)|
\le \|DY\|_{L^{\infty}(U)}\,J(s,\theta)\leq \|DY\|_{L^{\infty}(U)} e^{\epsilon_0\|DY\|_{ L^{\infty}(U)}}\,|H|_{C^{1,1}}\leq  C\bigl(a,\| H\|_{C^{1,1}}\bigr).
\end{aligned}
\end{equation}

\smallskip
\emph{(b) Control of $\|\nabla \tilde H\|^{-1}$.}
Since $\partial_{s}\gamma=Y(\gamma)$ and $\nabla \tilde H$ is Lipschitz,
for a.e.\ $s$,
\[
\partial_{s}\bigl(\nabla \tilde H(\gamma(s,\theta))\bigr)
=D^{2}\tilde H(\gamma(s,\theta))\,\partial_{s}\gamma(s,\theta)
=D^{2}\tilde H(\gamma(s,\theta))\,Y(\gamma(s,\theta)).
\]
Using $|\nabla \tilde H|\ge a/2$ on $U$, we obtain
\begin{equation}\label{eq:invgrad-der}
\left|\partial_{s}\left(\frac1{|\nabla \tilde H(\gamma(s,\theta))|}\right)\right|
\le C\|D^{2}\tilde H\|_{L^{\infty}(U)}\,\|Y\|_{L^{\infty}(U)}
\le C\bigl(a,\| H\|_{C^{1,1}}\bigr).
\end{equation}

Combining \eqref{eq:J-bound}--\eqref{eq:invgrad-der} and again using $|\nabla \tilde H|\ge a/2$,
we find for a.e.\ $s$,
\[
|\partial_{s} I(s,\theta)|
\le \frac{|\partial_{s}J(s,\theta)|}{|\nabla \tilde H(\gamma(s,\theta))|}
+J(s,\theta)\left|\partial_{s}\left(\frac1{|\nabla \tilde H(\gamma(s,\theta))|}\right)\right|
\le C\bigl(a,\|H\|_{C^{1,1}}\bigr).
\]
Therefore, $I(\cdot,\theta)$ is Lipschitz in $s$ with a constant bounded by $C_{*}$ uniformly in $\theta$.
By \eqref{eq:Ttilde-def} and the dominated convergence theorem,
\[
|\tilde T(h)-\tilde T(h')|
\le \int_{0}^{\ell_{0}} |I(h-h_0,\theta)-I(h'-h_0,\theta)|\,d\theta
\le \ell_{0} C\bigl(a,\|H\|_{C^{1,1}}\bigr)|h-h'|.
\]
Together with the $L^{\infty}$ bound \eqref{eq:TLinfty-bound}, this yields
\[
\|\tilde T\|_{C^{0,1}(h_{0}-\epsilon_{1},h_{0}+\epsilon_{1})}
\le C\bigl(a,\|H\|_{C^{1,1}}\bigr),
\]
and  since $T_{\tilde H}(\xi)=\tilde T(\tilde H(\xi))$,
\[
\|T_{\tilde H}\|_{C^{0,1}(U)}
\le C\bigl(a,\|H\|_{C^{1,1}}\bigr)\,\|\tilde H\|_{C^{0,1}(U)}
\le C\bigl(a,\|H\|_{C^{1,1}}\bigr).
\]
Covering $\mathcal{R}_{2a}$ by finitely many such tubular neighborhoods $U$ completes the proof of (1).

The proof of (2) is similar, replacing Lipschitz estimates with estimates from $C^{k-1,\alpha}$ and 
differentiating \eqref{eq:Ttilde-def} in $h$ up to  order $k-1$.
\end{proof} 
\bibliographystyle{plain}
\bibliography{citation}

@article{Nadirashvili2013StationaryEuler,
  author  = {Nadirashvili, Nikolai},
  title   = {On Stationary Solutions of Two-Dimensional {Euler} Equation},
  journal = {Archive for Rational Mechanics and Analysis},
  volume  = {209},
  number  = {3},
  pages   = {729--745},
  year    = {2013},
  doi     = {10.1007/s00205-013-0642-8},
  url     = {https://doi.org/10.1007/s00205-013-0642-8}
}

@article{lundberg2021note,
  title={A note on the critical points of the localization landscape},
  author={Lundberg, Erik and Ramachandran, Koushik},
  journal={Complex Analysis and its Synergies},
  volume={7},
  number={2},
  pages={12},
  year={2021},
  publisher={Springer}
}

@article{huang2025rigidity,
  title={On the rigidity of uniformly rotating vortex patch near the {Rankine} vortex},
  author={Huang, Yupei},
  journal={Nonlinearity},
  volume={38},
  number={1},
  pages={015003},
  year={2025},
  publisher={IOP Publishing}
}

@article{brue2024enhanced,
  title={Enhanced dissipation for two-dimensional {Hamiltonian} flows},
  author={Bru{\`e}, Elia and Coti Zelati, Michele and Marconi, Elio},
  journal={Archive for Rational Mechanics and Analysis},
  volume={248},
  number={5},
  pages={84},
  year={2024},
  publisher={Springer}
}

@article{wolansky1998nonlinear,
  title={Nonlinear stability for saddle solutions of ideal flows and symmetry breaking},
  author={Wolansky, G and Ghil, M},
  journal={Communications in Mathematical Physics},
  volume={193},
  number={3},
  pages={713--736},
  year={1998},
  publisher={Springer}
}

@article{ball1984quasiconvexity,
  title={Quasiconvexity at the boundary, positivity of the second variation and elastic stability},
  author={Ball, JM and Marsden, JE},
  journal={Archive for Rational Mechanics and Analysis},
  volume={86},
  number={3},
  pages={251--277},
  year={1984},
  publisher={Springer New York}
}

@article{lin2004some,
  title={Some stability and instability criteria for ideal plane flows},
  author={Lin, Zhiwu},
  journal={Communications in Mathematical Physics},
  volume={246},
  number={1},
  pages={87--112},
  year={2004},
  publisher={Springer}
}

@article{KI,
  title={Characterization of steady solutions to the {2D Euler} equation},
  author={Izosimov, Anton and Khesin, Boris},
  journal={International Mathematics Research Notices},
  volume={2017},
  number={24},
  pages={7459--7503},
  year={2017},
  publisher={Oxford University Press}
}

@misc{sverak2011course,
  author       = {Šverák, Vladimír},
  title        = {Selected Topics in Fluid Mechanics (Course Notes)},
  year         = {2011/2012},
  note         = {Available online at \url{https://www-users.cse.umn.edu/~sverak/course-notes2011.pdf}},
  howpublished = {Lecture notes, University of Minnesota}
}

@article{nadirashvili1991wandering,
  title={Wandering solutions of {Euler's D-2 equation}},
  author={Nadirashvili, Nikolai Semenovich},
  journal={Functional Analysis and Its Applications},
  volume={25},
  number={3},
  pages={220--221},
  year={1991},
  publisher={Springer}
}

@book{arnold1998topological,
  title={Topological methods in hydrodynamics},
  author={Arnold, Vladimir I and Khesin, Boris A},
  year={1998},
  publisher={Springer}
}

@article{Kelvin,
  title={Maximum and minimum energy in vortex motion},
  author={William Thomson (Lord Kelvin)},
  journal={Nature},
  volume={22},
  number={574},
  pages={618--620},
  year={1880}
}

@article {Burton,
    AUTHOR = {Burton, G. R.},
     TITLE = {Rearrangements of functions, maximization of convex
              functionals, and vortex rings},
   JOURNAL = {Math. Ann.},
  FJOURNAL = {Mathematische Annalen},
    VOLUME = {276},
      YEAR = {1987},
    NUMBER = {2},
     PAGES = {225--253},
      ISSN = {0025-5831,1432-1807},
   MRCLASS = {49A50 (30C25 35J60 35R35)},
  MRNUMBER = {870963},
MRREVIEWER = {J.\ E.\ Rubio},
       DOI = {10.1007/BF01450739},
       URL = {https://doi.org/10.1007/BF01450739},
}

@article {GomezSerranoCompact,
    AUTHOR = {G\'omez-Serrano, Javier and Park, Jaemin and Shi, Jia},
     TITLE = {Existence of non-trivial non-concentrated compactly supported
              stationary solutions of the 2{D} {E}uler equation with finite
              energy},
   JOURNAL = {Mem. Amer. Math. Soc.},
  FJOURNAL = {Memoirs of the American Mathematical Society},
    VOLUME = {311},
      YEAR = {2025},
    NUMBER = {1577},
     PAGES = {v+82},
      ISSN = {0065-9266,1947-6221},
      ISBN = {978-1-4704-7530-7; 978-1-4704-8396-8},
   MRCLASS = {35Q35 (35Q31 35R35 76Dxx)},
  MRNUMBER = {4935841},
MRREVIEWER = {Qin\ Zhao},
       DOI = {10.1090/memo/1577},
       URL = {https://doi.org/10.1090/memo/1577},
}

@article{enciso2024smooth,
  title={Smooth nonradial stationary {Euler} flows on the plane with compact support},
  author={Enciso, Alberto and Fern{\'a}ndez, Antonio J and Ruiz, David},
  journal={arXiv preprint arXiv:2406.04414},
  year={2024}
}

@book{marchioro2012mathematical,
  title={Mathematical theory of incompressible nonviscous fluids},
  author={Marchioro, Carlo and Pulvirenti, Mario},
  volume={96},
  year={2012},
  publisher={Springer Science \& Business Media}
}

@article{robertson1936theory,
  title={On the theory of univalent functions},
  author={Robertson, Malcolm IS},
  journal={Annals of Mathematics},
  pages={374--408},
  year={1936},
  publisher={JSTOR}
}

@article{lin2011inviscid,
  title={Inviscid dynamical structures near {Couette flow}},
  author={Lin, Zhiwu and Zeng, Chongchun},
  journal={Archive for Rational Mechanics and Analysis},
  volume={200},
  number={3},
  pages={1075--1097},
  year={2011},
  publisher={Springer}
}

@article{coti2023stationary,
  title={Stationary structures near the {Kolmogorov and Poiseuille} flows in the {2D Euler} equations},
  author={Coti Zelati, Michele and Elgindi, Tarek M and Widmayer, Klaus},
  journal={Archive for Rational Mechanics and Analysis},
  volume={247},
  number={1},
  pages={12},
  year={2023},
  publisher={Springer}
}

@article{zhao2024inviscid,
  title={On the inviscid instability of the {2-D Taylor--Green vortex}},
  author={Zhao, Xinyu and Protas, Bartosz and Shvydkoy, Roman},
  journal={Journal of Fluid Mechanics},
  volume={999},
  pages={A64},
  year={2024},
  publisher={Cambridge University Press}
}

@article{cao2026instability,
  title={Instability of Two-Dimensional {Taylor--Green} Vortices},
  author={Cao-Labora, Gonzalo and Colombo, Maria and Dolce, Michele and Ventura, Paolo},
  journal={arXiv preprint arXiv:2601.23040},
  year={2026}
}

@article{ElgindiHuangSaidXie_ClassificationSteadyEulerFlows_DMJ,
  title   = {A Classification Theorem for {steady Euler flows}},
  author  = {Elgindi, Tarek and Huang, Yupei and Said, Ayman and Xie, Chunjing},
  journal = {Duke Mathematical Journal},
  year    = {2026},
  note    = {To appear}
}

@article{gui2024classification,
  title={On a classification of steady solutions to two-dimensional {Euler} equations},
  author={Gui, Changfeng and Xie, Chunjing and Xu, Huan},
  journal={arXiv preprint arXiv:2405.15327},
  year={2024}
}

@article{drivas2024geometric,
  title={A geometric characterization of steady laminar flow},
  author={Drivas, Theodore D and Nualart, Marc},
  journal={arXiv preprint arXiv:2410.18946},
  year={2024}
}

@article{elgindi2022regular,
  AUTHOR = {Elgindi, Tarek and Huang, Yupei},
TITLE = {Regular and singular steady states of the 2{D} incompressible
{E}uler equations near the {B}ahouri-{C}hemin patch},
JOURNAL = {Arch. Ration. Mech. Anal.},
FJOURNAL = {Archive for Rational Mechanics and Analysis},
VOLUME = {249},
YEAR = {2025},
NUMBER = {1},
PAGES = {Paper No. 2},
ISSN = {0003-9527},
MRCLASS = {35Q31},
MRNUMBER = {4841732},
MRREVIEWER = {Guodong Wang},
DOI = {10.1007/s00205-024-02077-6},
URL = {https://doi.org/10.1007/s00205-024-02077-6},
}

@article{lin2003instability,
  title={Instability of some ideal plane flows},
  author={Lin, Zhiwu},
  journal={SIAM Journal on Mathematical Analysis},
  volume={35},
  number={2},
  pages={318--356},
  year={2003},
  publisher={SIAM}
}

@article{lin2004nonlinear,
  title={Nonlinear instability of ideal plane flows},
  author={Lin, Zhiwu},
  journal={International Mathematics Research Notices},
  volume={2004},
  number={41},
  pages={2147--2178},
  year={2004},
  publisher={OUP}
}

@inproceedings{arnold1965conditions,
  title={Conditions for nonlinear stability of steady plane curvilinear flows of an ideal fluid},
  author={Arnold, VI},
  booktitle={Dokl. Akad. Nauk SSSR},
  volume={162},
  number={5},
  pages={975--978},
  year={1965}
}

@article{arnold1966priori,
  title={An a priori estimate in the theory of hydrodynamic stability},
  author={Arnold, Vladimir Igorevich},
  journal={Izv. Vyssh. Uchebn. Zaved. Mat.[Sov. Math. J.]},
  volume={5},
  pages={3},
  year={1966}
}

@article{choffrut2012local,
  AUTHOR = {Choffrut, Antoine and \v{S}ver\'{a}k, Vladim\'{\i}r},
TITLE = {Local structure of the set of steady-state solutions to the
2{D} incompressible {E}uler equations},
JOURNAL = {Geom. Funct. Anal.},
FJOURNAL = {Geometric and Functional Analysis},
VOLUME = {22},
YEAR = {2012},
NUMBER = {1},
PAGES = {136--201},
ISSN = {1016-443X},
MRCLASS = {35Q31 (37C25 37K65 46T05 58D05 76B15)},
MRNUMBER = {2899685},
MRREVIEWER = {Daniel Belti\c{t}\u{a}},
DOI = {10.1007/s00039-012-0149-8},
URL = {https://doi.org/10.1007/s00039-012-0149-8},
}

@article{constantin2021flexibility,
  AUTHOR = {Constantin, Peter and Drivas, Theodore and Ginsberg,
 Daniel},
 TITLE = {Flexibility and rigidity in steady fluid motion},
 JOURNAL = {Comm. Math. Phys.},
 FJOURNAL = {Communications in Mathematical Physics},
 VOLUME = {385},
 YEAR = {2021},
 NUMBER = {1},
 PAGES = {521--563},
 ISSN = {0010-3616},
 MRCLASS = {76E05 (35Q35)},
 MRNUMBER = {4275792},
 MRREVIEWER = {Hao Jia},
 DOI = {10.1007/s00220-021-04048-4},
 URL = {https://doi.org/10.1007/s00220-021-04048-4},
}

@article{hamel2017shear,
 AUTHOR = {Hamel, Fran\c{c}ois and Nadirashvili, Nikolai},
TITLE = {Shear flows of an ideal fluid and elliptic equations in
unbounded domains},
JOURNAL = {Comm. Pure Appl. Math.},
FJOURNAL = {Communications on Pure and Applied Mathematics},
VOLUME = {70},
YEAR = {2017},
NUMBER = {3},
PAGES = {590--608},
ISSN = {0010-3640},
MRCLASS = {35Q35},
MRNUMBER = {3602531},
MRREVIEWER = {Maria Specovius-Neugebauer},
DOI = {10.1002/cpa.21670},
URL = {https://doi.org/10.1002/cpa.21670},
}

@article{hamel2019liouville,
 AUTHOR = {Hamel, Fran\c{c}ois and Nadirashvili, Nikolai},
 TITLE = {A {L}iouville theorem for the {E}uler equations in the plane},
 JOURNAL = {Arch. Ration. Mech. Anal.},
 FJOURNAL = {Archive for Rational Mechanics and Analysis},
 VOLUME = {233},
 YEAR = {2019},
 NUMBER = {2},
 PAGES = {599--642},
 ISSN = {0003-9527},
 MRCLASS = {76B03 (35B53 35Q31)},
 MRNUMBER = {3951689},
 MRREVIEWER = {Anna L. Mazzucato},
 DOI = {10.1007/s00205-019-01364-x},
 URL = {https://doi.org/10.1007/s00205-019-01364-x},
}

@article{gomez2021symmetry,
  AUTHOR = {G\'{o}mez-Serrano, Javier and Park, Jaemin and Shi, Jia and Yao,
Yao},
TITLE = {Symmetry in stationary and uniformly rotating solutions of
active scalar equations},
JOURNAL = {Duke Math. J.},
FJOURNAL = {Duke Mathematical Journal},
VOLUME = {170},
YEAR = {2021},
NUMBER = {13},
PAGES = {2957--3038},
ISSN = {0012-7094},
MRCLASS = {35B06 (35Q35)},
MRNUMBER = {4312192},
DOI = {10.1215/00127094-2021-0002},
URL = {https://doi.org/10.1215/00127094-2021-0002},
}

@article{ruiz2023symmetry,
AUTHOR = {Ruiz, David},
TITLE = {Symmetry results for compactly supported steady solutions of
the 2{D} {E}uler equations},
JOURNAL = {Arch. Ration. Mech. Anal.},
FJOURNAL = {Archive for Rational Mechanics and Analysis},
VOLUME = {247},
YEAR = {2023},
NUMBER = {3},
PAGES = {Paper No. 40},
ISSN = {0003-9527},
MRCLASS = {35Q31 (76B03)},
MRNUMBER = {4575796},
MRREVIEWER = {Mikhail M. Shvartsman},
DOI = {10.1007/s00205-023-01877-6},
URL = {https://doi.org/10.1007/s00205-023-01877-6},
}

@article{drivas2023singularity,
AUTHOR = {Drivas, Theodore and Elgindi, Tarek},
TITLE = {Singularity formation in the incompressible {E}uler equation
in finite and infinite time},
JOURNAL = {EMS Surv. Math. Sci.},
FJOURNAL = {EMS Surveys in Mathematical Sciences},
VOLUME = {10},
YEAR = {2023},
NUMBER = {1},
PAGES = {1--100},
ISSN = {2308-2151},
MRCLASS = {35Q31 (35Q30)},
MRNUMBER = {4667415},
DOI = {10.4171/emss/66},
URL = {https://doi.org/10.4171/emss/66},
}

@article{yudovich2003eleven,
  title={Eleven great problems of mathematical hydrodynamics},
  author={Yudovich, Victor Iosifovich},
  journal={Moscow Mathematical Journal},
  volume={3},
  number={2},
  pages={711--737},
  year={2003},
  publisher={Независимый Московский университет--МЦНМО}
}
\end{document}